\documentclass[
reqno,
10pt,
oneside
]{article}

\usepackage[ngerman, english]{babel}
\usepackage[utf8]{inputenc}
\usepackage[normalem]{ulem}
\usepackage[babel,french=guillemets,german=swiss]{csquotes}
\usepackage[center,font=small]{caption}
\usepackage{cite}
\usepackage{bbm}
\usepackage[raggedright]{titlesec}
\usepackage[dvipsnames]{xcolor}
\usepackage{enumerate}

\titleformat{\section}{\normalfont\large\bfseries}{\thesection}{1em}{}
\titleformat{\subsection}{\normalfont\bfseries}{\thesubsection}{1em}{}

\usepackage{tocbasic}
\DeclareTOCStyleEntry[
beforeskip=.2em plus 1pt,
]{tocline}{section}

\usepackage[%
left=1.5in,
right=1.5in,
bottom=1in,
top=0.8in,
footskip=0.4in,
]{geometry}

\usepackage{color}
\definecolor{LinkColor}{rgb}{0,0,1}
\definecolor{LinkColor2}{rgb}{0,0.5,0}
\definecolor{lbcolor}{rgb}{0.85,0.85,0.85}
\definecolor{FrameColor}{rgb}{0.85,0.85,0.85}
\definecolor{rosso}{rgb}{0.8,0,0}
\definecolor{lightgray}{rgb}{0.5,0.5,0.5}
\definecolor{violet}{rgb}{0.65,0,0.65}
\definecolor{darkgreen}{rgb}{0,0.5,0}

\usepackage{enumitem}

\usepackage{graphicx}
\usepackage{placeins}
\usepackage{overpic}

\usepackage{amsmath}
\usepackage{amssymb}
\usepackage{dsfont}
\usepackage{empheq}
\allowdisplaybreaks
\usepackage{amsthm}

\usepackage[%
pdftitle={Titel},%
pdfauthor={Autor},%
pdfcreator={LaTeX, LaTeX with hyperref and KOMA-Script},
pdfsubject={Betreff}, 
pdfkeywords={Keywords}
]{hyperref} 

\hypersetup{%
	colorlinks	=true,
	linkcolor	=LinkColor,%
	anchorcolor	=LinkColor,%
	citecolor	=LinkColor2,%
	filecolor	=LinkColor,%
	menucolor	=LinkColor,%
	urlcolor	=LinkColor,%
}
\usepackage{marginnote}

\usepackage{verbatim}

\newtheorem{theorem}{Theorem}[section]
\newtheorem{lemma}[theorem]{Lemma}
\newtheorem{proposition}[theorem]{Proposition}
\newtheorem{corollary}[theorem]{Corollary}
\newtheorem{definition}[theorem]{Definition}

\theoremstyle{definition}
\newtheorem{remark}[theorem]{Remark}

\makeatletter
\renewenvironment{proof}[1][\proofname]{%
	\par\pushQED{\qed}\normalfont%
	\topsep6\p@\@plus6\p@\relax
	\trivlist\item[\hskip\labelsep\bfseries#1\@addpunct{.}]%
	\ignorespaces
}{%
	\popQED\endtrivlist\@endpefalse
}
\makeatother

\makeatletter
\renewcommand\paragraph{\@startsection{paragraph}{4}{\z@}%
	{1ex \@plus1ex \@minus.2ex}%
	{-1em}%
	{\normalfont\normalsize\bfseries}}
\renewcommand\subparagraph{\@startsection{paragraph}{4}{\z@}%
	{1ex \@plus1ex \@minus.2ex}%
	{-1em}%
	{\normalfont\normalsize\itshape}}
\makeatother

\newcommand{\abs}[1]{| #1 |}

\newcommand{\Bigabs}[1]{\Big| #1 \Big|}

\newcommand{\norm}[1]{\| #1 \|}
\newcommand{\bignorm}[1]{\big\| #1 \big\|}

\newcommand{\ang}[2]{ \langle #1 , #2  \rangle}
\newcommand{\bigang}[2]{ \big< #1 , #2  \big>}

\newcommand{\scp}[2]{ \left( #1 , #2  \right)}
\newcommand{\bigscp}[2]{\big( #1 , #2 \big)}

\newcommand{\mean}[2]{\textnormal{mean}\scp{#1}{#2}}
\newcommand{\meano}[1]{{\langle #1 \rangle}_{\Omega}}
\newcommand{\meang}[1]{{\langle #1 \rangle}_{\Gamma}}

\newcommand{\R}{\mathbb R}
\newcommand{\N}{\mathbb N}
\newcommand{\n}{\mathbf{n}}

\newcommand{\intO}{\int_\Omega}

\newcommand{\intG}{\int_\Gamma}

\newcommand{\dist}{\textnormal{dist\,}}

\newcommand{\dr}{\;\mathrm dr}
\newcommand{\dx}{\;\mathrm{d}x}

\newcommand{\ds}{\;\mathrm ds}

\newcommand{\dxs}{\;\mathrm{d}x\;\mathrm{d}s}
\newcommand{\dxr}{\;\mathrm{d}x\;\mathrm{d}r}
\newcommand{\dGs}{\;\mathrm{d}\Ga\;\mathrm{d}s}
\newcommand{\dGr}{\;\mathrm{d}\Ga\;\mathrm{d}r}

\newcommand{\dG}{\;\mathrm d\Gamma}

\newcommand{\ddt}{\frac{\mathrm d}{\mathrm dt}}

\newcommand{\del}{\partial}
\newcommand{\delt}{\partial_{t}}
\newcommand{\delth}{\partial_{t}^{h}}

\newcommand{\deln}{\partial_\n}

\newcommand{\Grad}{\nabla}
\newcommand{\Lap}{\Delta}
\newcommand{\Div}{\textnormal{div}}
\newcommand{\Gradg}{\nabla_\Ga}
\newcommand{\Lapg}{\Delta_\Ga}
\newcommand{\Divg}{\textnormal{div}_\Ga}

\newcommand{\emb}{\hookrightarrow}

\newcommand{\suchthat}{\;\ifnum\currentgrouptype=16 \middle\fi|\;}

\newcommand{\HH}{\mathcal{H}}

\newcommand{\VV}{{\mathcal{V}}}

\newcommand{\WW}{\mathcal{W}}
\newcommand{\LL}{\mathcal{L}}
\newcommand{\BLL}{\boldsymbol{\mathcal{L}}}
\newcommand{\Wms}[1]{{\mathcal{W}_{K,L,m}^#1}}

\def\bv{\mathbf v}
\def\bw{\mathbf w}
\def\bL{\mathbf L}

\newcommand{\Om}{\Omega}
\newcommand{\Ga}{\Gamma}

\newcommand{\blk}{\color{black}}

\def\oml{\omega^{K,L}(\phi_0, \psi_0) }

\begin{document}

%
%

\title{\bfseries\Large 
    Long-time dynamics of a\\ 
    bulk-surface convective Cahn--Hilliard system:\\[0.5ex] 
    Pullback attractors and convergence to equilibrium
\\[-1.5ex]$\;$}

\author{
	Patrik Knopf \footnotemark[1] \footnotemark[4]
    \and Andrea Poiatti \footnotemark[2]
    \and Jonas Stange \footnotemark[1]
    \and Sema Yayla \footnotemark[3] 
}
\date{ }

\maketitle

\renewcommand{\thefootnote}{\fnsymbol{footnote}}

\footnotetext[1]{
    Faculty for Mathematics, 
    University of Regensburg, 
    Universitätsstr.~31,
    D-93053 Regensburg, 
    Germany \newline
	\tt(%
        \href{mailto:patrik.knopf@ur.de}{patrik.knopf@ur.de},
        \href{mailto:jonas.stange@ur.de}{jonas.stange@ur.de}%
        ).
}
\footnotetext[2]{
    Faculty of Mathematics, 
    University of Vienna, 
    Oskar-Morgenstern-Platz~1, 
    A-1090 Vienna, Austria \newline
	\tt(%
          \href{mailto:andrea.poiatti@univie.ac.at}{andrea.poiatti@univie.ac.at}%
        ).
}
\footnotetext[3]{
    Department of Mathematics, Faculty of Science,
    Hacettepe University, 
    Beytepe 06800, Ankara, Turkey \newline
	\tt(%
          \href{mailto:semasimsek@hacettepe.edu.tr}{semasimsek@hacettepe.edu.tr}%
        ).
}

\footnotetext[4]{
    Institute for Analysis,
    Department of Mathematics,
    Karlsruhe Institute of Technology (KIT), 
    D-76128 Karlsruhe, 
    Germany \newline
	\tt(%
        \href{mailto:patrik.knopf@kit.edu}{patrik.knopf@kit.edu}%
        ).
}

\begin{center}
    $\phantom{x}$\\[-6ex]
	\scriptsize
	\color{white}
	{
		\textit{This is a preprint version of the paper. Please cite as:} \\  
		P.~Knopf, A.~Poiatti, J.~Stange, S. Yayla
        \textit{[Journal]} \textbf{xx}:xx 000-000 (2024), \\
		\texttt{https://doi.org/...}
	}
\end{center}

\medskip

%
%

\begin{small}
\textbf{Abstract.} We study the long-time dynamics of a bulk--surface convective Cahn--Hilliard system describing phase separation processes with bulk-surface interaction. The presence of convection terms leads to a non-autonomous dynamical system and prevents the associated free energy from being a Lyapunov functional, which makes the analysis of the asymptotic behavior considerably more challenging.
First, we establish an instantaneous regularization property for weak solutions. Next, interpreting the evolution as a continuous two-parameter process, we prove the existence of a minimal pullback attractor. Finally, under suitable decay assumptions on the velocity fields, we show that every solution converges as $t\to\infty$ to a single steady state. 
The proof of this convergence relies on the {\L}ojasiewicz--Simon inequality combined with customized decay estimates that compensate for the lack of a monotone energy functional. 
\\[2ex]
\textbf{Keywords:} convective Cahn--Hilliard equation, bulk-surface interaction, dynamic boundary conditions, propagation of regularity, pullback-attractor, convergence to equilibrium.
\\[2ex]	
\textbf{Mathematics Subject Classification:} 
35K35, 
35B40,  
35Q92, 
35B65. 
\end{small}

\begin{footnotesize}
\setcounter{tocdepth}{2}
\hypersetup{linkcolor=black}
\tableofcontents
\end{footnotesize}

\smallskip

\setlength\parindent{0ex}
\setlength\parskip{1ex}
\allowdisplaybreaks
\numberwithin{equation}{section}
\renewcommand{\thefootnote}{\arabic{footnote}}

\newpage


\section{Introduction} 
\label{SECT:INTRO}

The Cahn--Hilliard equation was originally introduced to describe spinodal decomposition in binary alloys. Subsequently, it was recognized that the equation also provides an effective framework for modeling later stages of phase separation processes, such as Ostwald ripening. Today, the Cahn--Hilliard equation has become one of the most widely used diffuse-interface models, with applications spanning materials science, life sciences, and image processing.

In its classical formulation in a bounded domain, the Cahn--Hilliard system is typically supplemented with homogeneous Neumann boundary conditions for both the phase-field variable and the chemical potential 
when studying the associated initial-boundary value problem. However, both mathematicians and physicists (see, e.g., \cite{Binder1991,Fischer1997,Fischer1998,Kenzler2001}) have observed that such boundary conditions lead to significant limitations when an accurate description of the dynamics near the boundary is required.

A key drawback is that the homogeneous Neumann condition for the phase-field variable forces diffuse interfaces between the two components to intersect the boundary at a right angle. In many practical situations, this assumption is unrealistic, since the contact angle generally deviates from ninety degrees and may even evolve dynamically over time. Moreover, the homogeneous Neumann condition for the chemical potential prevents any exchange of material between the bulk and the boundary. While this constraint is appropriate in settings where total bulk mass must be conserved, it becomes inadequate when processes such as adsorption or chemical reactions occur at the boundary, where mass transfer between bulk and surface must be taken into account.

Motivated by these limitations of the classical formulation and by the need to more accurately capture short-range interactions between bulk and surface, several Cahn--Hilliard models with dynamic boundary conditions have been proposed and studied in the literature. In particular, dynamic boundary conditions have attracted considerable attention in recent years (see, e.g., \cite{Gal2016,Gal2019,Gal2023,Garcke2020,Lv2024b,Colli2020,Colli2022a,Fukao2021,Miranville2020,Knopf2021b,Stange2025,Lv2024}), as they allow for a consistent description of coupled bulk-surface dynamics, including variables contact angles and the exchange of material between bulk and boundary.  
For more information on the Cahn--Hilliard equation with and without dynamic boundary conditions, we refer to the recent review paper \cite{Wu2022} and the book \cite{Miranville-Book}.

In this paper, we investigate the long-time dynamics of the \textit{bulk-surface convective Cahn--Hilliard model}
\begin{subequations}
\label{conCH*}
\begin{alignat}{2}
       \label{conCH*:1}
    &\delt \phi + \Div(\phi \bv )
    = \Lap \mu
    &&\qquad\text{in $Q_\tau$}, 
    \\
    \label{conCH*:2}
    &\mu = - \Lap \phi + F'(\phi)
    &&\qquad\text{in $Q_\tau$}, 
    \\
    \label{conCH*:3}
    &\delt \psi + \Divg(\psi \bw ) 
    = \Lapg \theta - \beta\deln \mu 
    &&\qquad\text{on $\Sigma_\tau$}, 
    \\
    \label{conCH*:4}
    &\theta = - \Lapg \psi +  G'(\psi)
    + \alpha\deln \phi
    &&\qquad\text{on $\Sigma_\tau$}, 
    \\
    \label{conCH*:5}
    &\begin{cases}
        K \deln \phi = \alpha \psi - \phi &\text{if}\; K\in [0,\infty),\\
        \deln \phi = 0 &\text{if}\; K=\infty
    \end{cases}
    &&\qquad\text{on $\Sigma_\tau$},
    \\
    \label{conCH*:6}
    &\begin{cases}
        L\deln \mu = \beta \theta - \mu &\text{if}\; L\in [0,\infty),\\
        \deln \mu = 0 &\text{if}\; L=\infty
    \end{cases}
    &&\qquad\text{on $\Sigma_\tau$},
    \\
    \label{conCH*:7}
    &\phi\vert_{t=\tau} = \phi_\tau
    &&\qquad\text{in $\Omega$},
    \\
    \label{conCH*:8}
    &\psi\vert_{t=\tau} = \psi_\tau
    &&\qquad\text{on $\Gamma$},
\end{alignat}
\end{subequations}
which was introduced in \cite{Knopf2024} and further analyzed in \cite{Knopf2025,Giorgini2025}. This model represents a convective extension of a general bulk-surface Cahn--Hilliard system that unifies several Cahn--Hilliard formulations with dynamic boundary conditions (e.g., \cite{Goldstein2011,Liu2019,Knopf2021a,Knopf2020}) within a single framework. 
The model is of particular interest as it naturally arises as a subsystem in Cahn--Hilliard-based models for two-phase flows with dynamic boundary conditions; see, for instance, \cite{Giorgini2023,Knopf2025a,Stange2026,Colli2023,Gal2026}.

In system \eqref{conCH*}, $\Om \subset \R^d$, $d\in\{2,3\}$, is a bounded domain with boundary $\Ga\coloneqq  \del \Om$.
For any initial time $\tau\in\R$, we use the notation $Q_\tau \coloneqq \Om\times(\tau,\infty)$ and $\Sigma_\tau \coloneqq \Ga\times(\tau,\infty)$. The outward pointing unit normal vector on $\Ga$ is denoted by $\n$, whereas $\deln$ denotes the corresponding normal derivative on the boundary. Moreover, the symbols $\Gradg$, $\Divg$, and $\Lapg$ stand for the surface gradient, the surface divergence, and the Laplace--Beltrami operator on $\Ga$, respectively.

The functions $\phi : Q_\tau \to \R$ and $\mu : Q_\tau \to \R$ denote the phase field and the chemical potential in the bulk, respectively, while $\psi : \Sigma_\tau \to \R$ and $\theta : \Sigma_\tau \to \R$ represent the corresponding quantities on the boundary.
The phase fields $\phi$ and $\psi$ are coupled through the boundary condition \eqref{conCH*:5}, whereas the chemical potentials $\mu$ and $\theta$ are coupled via the boundary condition \eqref{conCH*:6}. There, the parameters $K, L \in [0,\infty]$ determine the type of coupling between the respective bulk and surface quantities.
The functions $F^\prime$ and $G^\prime$ are the derivatives of double-well potentials $F$ and $G$. 
In this paper, we consider singular potentials whose exact properties are specified in Section~\ref{SUBSEC:ASS}. In particular, the physically relevant \textit{Flory--Huggings potential}
\begin{align*}
	W_{\textup{log}}(s) \coloneqq \frac{\Theta}{2}\big[(1+s)\ln(1+s) + (1-s)\ln(1-s)\big] -\frac{\Theta_c}{2}s^2, \quad s\in[-1,1],
\end{align*}
is an admissible choice for both $F$ and $G$.

System \eqref{conCH*} is associated with the total free energy
\begin{align*}
    E_{\mathrm{free}}(\phi,\psi) 
    \coloneqq E_{\text{free},\Omega}(\phi) + E_{\text{free},\Gamma}(\phi,\psi),
\end{align*}
where
\begin{align*}
    E_{\text{free},\Omega}(\phi) 
    &\coloneqq  \intO \frac12\abs{\Grad \phi}^2 + F(\phi) \dx,
    \\
    E_{\text{free},\Gamma}(\phi,\psi) 
    &\coloneqq  \intG \frac12\abs{\Gradg \psi}^2 + G(\psi) \dG  
        + \chi(K)\intG \frac12(\alpha\psi - \phi)^2 \dG.
\end{align*}
Here, the function
\begin{equation*}
    \chi:[0,\infty]\rightarrow[0,\infty), \qquad r\mapsto  
    \begin{cases}
        \frac1r &\text{if}\; r\in (0,\infty),\\
        0 &\text{if}\; r\in\{0,\infty\},
    \end{cases}
\end{equation*}
is used to distinguish different cases corresponding to the choice of $K\in[0,\infty]$.
Sufficiently regular solutions of system \eqref{conCH*} satisfy the the \textit{energy identity}
\begin{align*}
    \begin{aligned}
    \ddt E_{\mathrm{free}}(\phi,\psi)
    &=  \intO \phi\bv\cdot\Grad \mu \dx
    +\intG \psi\bw\cdot \Gradg \theta  \dG
    - \intO \abs{\Grad \mu}^2 \dx \\
    &\quad - \intG  \abs{\Gradg \theta}^2 \dG 
    - \chi(L) \intG (\beta\theta - \mu)^2 \dG
    \end{aligned}
\end{align*}
on $[\tau,\infty)$, and the \textit{mass conservation law}
\begin{align*}
    \left\{
    \begin{aligned}
    &\beta\intO \phi(t) \dx + \intG \psi(t) \dG 
    = \beta\intO \phi_0 \dx + \intG \psi_0 \dG
    &&\quad\text{if $L\in[0,\infty)$,}
    \\
    &\intO \phi(t) \dx = \intO \phi_0 \dx 
    \quad\text{and}\quad
    \intG \psi(t) \dG = \intG \psi_0 \dG
    &&\quad\text{if $L=\infty$,}
    \end{aligned}
    \right.
\end{align*}
for all $t\in[\tau,\infty)$.

\paragraph{Main results, difficulties, and novelties of this paper.}
In this paper, we investigate the long-time dynamics of system \eqref{conCH*}. 
On the one hand, this question is of interest since insights gained for \eqref{conCH*} may provide information on the long-time behavior of more involved models in which \eqref{conCH*} appears as a subsystem, such as the two-phase flow models studied in \cite{Giorgini2023,Knopf2025a,Stange2026,Colli2023,Gal2026}. 
On the other hand, the problem is also appealing from the viewpoint of mathematical analysis. 

Remarkably, the long-time behavior of bulk-surface Cahn--Hilliard models in the non-convec\-tive case is by now well understood, and an extensive literature is available (see, e.g., \cite{Garcke2022,Miranville2020,Lv2024a,Lv2024b,Stange2025,Liu2019,Lv2025}). 
We also refer to \cite{Chill2006,Gal2008,Gilardi2010,Pruss2006} for the long-time analysis of further Cahn--Hilliard models with dynamic boundary conditions. 
In contrast, only very few results are available for convective Cahn--Hilliard systems.
In \cite{Colli2018a}, the authors study a viscous, convective Cahn--Hilliard system with dynamic boundary conditions. It is shown there that the $\omega$-limit set of this model is non-empty and that each of its elements is a solution to the corresponding stationary problem. We also mention the contribution \cite{Zhao2020}, which studies the long-time dynamics of a convective Cahn--Hilliard model in two dimensions. However, the convective term considered there has a very specific structure and is substantially different from the one appearing in system \eqref{conCH*}. We finally remark that we expect the new results presented in the recent contribution \cite{GrasselliPoiatti} could be extended to the bulk-surface Cahn--Hilliard equation, which will be the objective of future work.

The main difficulty of the long-time analysis of convective Cahn--Hilliard models such as \eqref{conCH*} is that the underlying dynamical system is non-autonomous. Therefore, unless the velocity fields are constant in time, it cannot be described as a semigroup. For this reason, we need to move from the framework of semigroups and global attractors to the more general theory of two-parameter processes and pullback attractors. 
Moreover, due to the presence of the additional convection terms, the total free energy $E_\mathrm{free}$ is in general no longer decreasing along solutions of \eqref{conCH*}. As a consequence, establishing convergence to a single equilibrium becomes significantly more challenging, since, unlike in the non-convective case, $E_\mathrm{free}$ is not a Lyapunov functional. 

The main results of this paper are as follows:
\begin{enumerate}[label=\textnormal{\bfseries(\Roman*)},topsep=0ex,leftmargin=*]
    \item \textbf{Instantaneous regularization of the weak solution.} 
    In Section~3, after recalling from \cite{Knopf2025,Giorgini2025} some well-posedness and regularity results, we establish a new instantaneous regularization property for weak solutions. Namely, the unique weak solution instantly regularizes at times strictly greater than the initial time due to parabolic smoothing.
    
    \item \textbf{Existence of a minimal pullback attractor.}
    In Section~4, we show that the underlying dynamical system of system \eqref{conCH*} can be interpreted as a continuous two-parameter process. We further observe that in the non-convective case (i.e., $(\bv,\bw)\equiv (\mathbf{0},\mathbf{0})$), the corresponding dynamical system is autonomous, and the continuous process can be reduced to a continuous semigroup. Finally, using abstract results from the theory of non-autonomous dynamical systems, we prove that the continuous process associated with \eqref{conCH*} actually admits a minimal pullback attractor.

    At this point, we also want to mention the work \cite{Jiang2015}, where the existence of a minimal pullback attractor to a non-autonomous Cahn–Hilliard–Darcy system with mass source was established by employing similar techniques.
    
    \item \textbf{Convergence to a steady state.}
    In Section~5, we finally show that each unique solution to system \eqref{conCH*} converges as $t\to\infty$ to a single steady state. In other words, this means that the $\omega$-limit set is actually a singleton. 
    Our proof relies on a 
    \textit{{\L}ojasiewicz--Simon inequality} and the fact that stationary solutions are \textit{uniformly strictly separated }from pure phases.
    As our dynamical system is non-autonomous and since the energy functional $E_\mathrm{free}$ is not necessarily decreasing along solutions, we need to devise customized decay estimates for the weak solution to prove its convergence to a steady state. In this context, it is essential that the velocity fields $(\bv,\bw)$ fulfill a certain decay property for large times in a suitable norm (see~\ref{D5}).

    To the best of our knowledge, this is the first work establishing convergence to equilibrium as $t \to \infty$ for a convective Cahn--Hilliard model where prescribed velocity fields cause the system to be non-autonomous.
    In this sense, this technique may have the potential to be applied to a variety of related situations and models, including even the case of multi-component phase-field systems (see, e.g., \cite{GCElliottPoiatti, GGPS,multiCAC}) or problems on evolving surfaces like \cite{CEGP}.
    
    We also refer to \cite{Jiang2015}, where convergence to equilibrium was established for a two-dimensional Cahn--Hilliard--Darcy model with mass source. However, the situation considered there somewhat differs from the one in the present paper since the external quantity rendering the dynamical system non-autonomous enters the Cahn--Hilliard equation through a source term on the right-hand side.
    
\end{enumerate}

\section{Notation, assumptions, and preliminaries} 
\label{SECT:PRELIM}

In this section, we introduce some notation, assumptions, and preliminaries that are supposed to hold throughout the remainder of this paper.

\subsection{Notation}
Let us first introduce some basic notation.

\begin{enumerate}[label=\textnormal{\bfseries(N\arabic*)}, leftmargin=*]
    \item $\N$ denotes the set of natural numbers excluding zero, whereas $\N_0 = \N\cup\{0\}$.
    
    \item Let $\Omega \subset \R^d$ with $d\in\{2,3\}$ be a bounded domain in $\R^d$, and let $\Gamma \coloneqq\del\Omega$ denote its boundary, which is assumed to be sufficiently regular. For any $s\geq 0$ and $p\in[1,\infty]$, the Lebesgue and Sobolev spaces for functions mapping from $\Om$ to $\R$ are denoted as $L^p(\Om)$ and $W^{s,p}(\Om)$. We write $\norm{\cdot}_{L^p(\Om)}$ and $\norm{\cdot}_{W^{s,p}(\Om)}$ to denote the standard norms on these spaces. In the case $p=2$, we use the notation $H^s(\Om) = W^{s,2}(\Om)$. In particular, $H^0(\Om)$ can be identified with $L^2(\Om)$. The Lebesgue and Sobolev spaces on $\Ga$ are denoted by $L^p(\Ga)$ and $W^{s,p}(\Ga)$ along with the corresponding norms $\norm{\cdot}_{L^p(\Ga)}$ and $\norm{\cdot}_{W^{s,p}(\Ga)}$, respectively. 
    For vector-valued functions mapping from $\Om$ into $\R^d$, we use the notation $\mathbf{L}^p(\Om)$, $\mathbf{W}^{s,p}(\Om)$ and $\mathbf{H}^s(\Om)$. The spaces $\mathbf{L}^p(\Ga)$, $\mathbf{W}^{s,p}(\Ga)$ and $\mathbf{H}^s(\Ga)$ are defined analogously. 

    Moreover, for any interval $I\subset\R$, any Banach space $X$, $p\in[1,\infty]$ and $k \in \N$, we write $L^p(I;X)$, $W^{k,p}(I;X)$ and $H^{k}(I;X) = W^{k,2}(I;X)$ to denote the Lebesgue and Sobolev spaces of functions with values in $X$. The standard norms on these spaces are denoted by $\|\cdot\|_{L^p(I;X)}$, $\|\cdot\|_{W^{k,p}(I;X)}$ and $\|\cdot\|_{H^k(I;X)}$, respectively. Further, we define
    \begin{align*}
        L^p_\mathrm{loc}(I;X) 
        &\coloneqq 
        \big\{ 
            u:I\to X \,\big\vert\, u \in L^p(J;X) \;\text{for every compact interval $J\subset I$}
        \big\},
        \\[1ex]
        L^p_\mathrm{uloc}(I;X) 
        &\coloneqq 
        \left\{ u:I\to X \,\middle|\,
        \begin{aligned}
        &u \in L^p_\mathrm{loc}(I;X) \;\text{and}\; \exists C>0\; \forall t\in\R:\\
        &\|u\|_{L^p(I\cap[t,t+1);X)} \le C
        \end{aligned}
        \right\}.
    \end{align*}
    The spaces $W^{k,p}_\mathrm{loc}(I;X)$, $H^k_\mathrm{loc}(I;X)$, $W^{k,p}_\mathrm{uloc}(I;X)$, $H^k_\mathrm{uloc}(I;X)$ are defined analogously. We further define
    \begin{align*}
        \norm{u}_{L^p_{\mathrm{uloc}}(I;X)} \coloneqq \sup_{t\in\R} \norm{u}_{L^p(I\cap [t,t+1);X)}.
    \end{align*}
    \item For any Banach space $X$, its dual space is denoted by $X'$. The corresponding duality pairing of elements $\phi\in X'$ and $\zeta\in X$ is denoted by $\ang{\phi}{\zeta}_X$. If $X$ is a Hilbert space, we write $\scp{\cdot}{\cdot}_X$ to denote its inner product. 
    
    \item For any bounded domain $\Om\subset \R^d$ ($d\in\N$) with Lipschitz boundary $\Ga$, $u\in H^1(\Om)'$ and $v\in H^1(\Ga)'$, we write
    \begin{align*}
        \meano{u}\coloneqq 
        \frac{1}{\abs{\Om}}\ang{u}{1}_{H^1(\Om)},
        \qquad
        \meang{v}\coloneqq 
        \frac{1}{\abs{\Ga}}\ang{v}{1}_{H^1(\Ga)}
    \end{align*}
    to denote the generalized means of $u$ and $v$, respectively. Here, $\abs{\Om}$ denotes the $d$-dimensional Lebesgue measure of $\Omega$, whereas $\abs{\Ga}$ denotes the $(d-1)$-dimensional Hausdorff measure of $\Gamma$. If $u\in L^1(\Om)$ or $v\in L^1(\Ga)$, the generalized mean can be expressed as
    \begin{align*}
        \meano{u} = \frac{1}{\abs{\Om}} \intO u \dx,
        \qquad
        \meang{v} = \frac{1}{\abs{\Ga}} \intG v \dG,
    \end{align*}
    respectively.

    \item For any bounded domain $\Om\subset \R^d$ ($d\in\N$) with Lipschitz boundary $\Gamma\coloneqq\partial\Omega$ and any $2\leq p <\infty$, we introduce the spaces
    \begin{align*}
        \mathbf{L}^p_\Div(\Om) &\coloneqq \{\bv\in\mathbf{L}^p(\Om) : \Div\;\bv = 0 \ \text{in~} \Om, \ \bv\cdot\mathbf{n} = 0 \ \text{on~} \Ga\},
        \\
        \mathbf{L}^p_\mathrm{tan}(\Ga)&\coloneqq\{\bw\in\mathbf{L}^p(\Ga) : \bw\cdot\n = 0 \ \text{on~}\Ga\},
        \\
        \mathbf{L}^p_\Div(\Ga)&\coloneqq\{\bw\in\mathbf{L}^p_\mathrm{tan}(\Ga) : \Divg\;\bw = 0 \ \text{on~}\Ga\}.
    \end{align*}
    We point out that in the definition of $\mathbf{L}^p_\Div(\Om)$  the expression $\Div\;\bv$ in $\Om$ is to be understood in the sense of distributions. If $\bv\in \mathbf{L}^p(\Om)$ ($p\ge 2$) with $\Div\;\bv = 0$ in $\Om$, we already know that $\bv\cdot\n \in H^{-1/2}(\Ga)$, and therefore, the relation $\bv\cdot\n = 0$ on $\Ga$ is well-defined.
    \item Let $(X,d)$ be a metric space. Then for any sets $A,B\subset X$, the \emph{Hausdorff semidistance} is defined as
    \begin{align*}
    	\dist_{X}(A,B) \coloneqq \underset{a\in A}{\phantom{|}\sup\phantom{|}} \!\! \underset{b\in B}{\phantom{|}\inf\phantom{|}} d \left( a,b\right).
    \end{align*} 
\end{enumerate}

\subsection{Assumptions} \label{SUBSEC:ASS}
We make the following general assumptions.

\begin{enumerate}[label=\textnormal{\bfseries(A\arabic*)}, leftmargin=*]
    \item  \label{ASSUMP:1} $\Omega$ is a non-empty, bounded domain in $\R^d$ with $d\in\{2,3\}$, whose boundary $\Ga\coloneqq\del\Om$ is at least of class $C^{2,1}$. Moreover, $\tau\in \R$ is a prescribed initial time, and we use the notation
    \begin{align*}
        Q_\tau\coloneqq \Om\times(\tau,\infty), \quad\Sigma_\tau \coloneqq\Ga\times(\tau,\infty).
    \end{align*}
    
    \item \label{ASSUMP:2} The constants in system \eqref{conCH*} satisfy $K,L \in [0,\infty]$, $\alpha\in[-1,1]$ and $\beta\in \R$ with $\alpha\beta\abs{\Omega} + \abs{\Gamma} \neq 0$. (The latter condition is required to apply a certain bulk-surface Poincar\'e inequality, see Lemma~\ref{lemma:poinc}).

\end{enumerate}
    
For the potentials $F,G :\R\rightarrow\R$ we assume the decompositions $F = F_1 + F_2$ and $G = G_1 + G_2$ with the following properties:
\begin{enumerate}[label=\textnormal{\bfseries(S\arabic*)}, leftmargin=*]
\item \label{S1} It holds $F_1,G_1\in C([-1,1])\cap C^2(-1,1)$ and
\begin{align*}
    \lim_{s\searrow -1} F_1^\prime(s) = \lim_{s\searrow -1} G_1^\prime(s) = -\infty \quad\text{and}\quad
    \lim_{s\nearrow 1} F_1^\prime(s) = \lim_{s\nearrow 1} G_1^\prime(s) = +\infty.
\end{align*}
Moreover, there exists a constant $\Theta > 0$ such that
\begin{align*}
    F_1^{\prime\prime}(s) \geq \Theta \quad\text{and}\quad G_1^{\prime\prime}(s)\geq \Theta
    \quad\text{for all $s\in(-1,1)$.}
\end{align*}
As usual, we extend $F_1(s) = G_1(s) = +\infty$ for all $s\not\in[-1,1]$
Without loss of generality, we assume $F_1(0) = G_1(0) = 0$ and $F_1^\prime(0) = G_1^\prime(0) = 0$. In particular, those entail
\begin{align*}
    F_1(s) \geq 0 \quad\text{and}\quad G_1(s) \geq 0
    \quad\text{for all~}s\in[-1,1].
\end{align*}
\item \label{S2} There exists positive constants $\kappa_1, \kappa_2 > 0$ such that
\begin{align}\label{DominationProperty}
    \abs{F_1^\prime(\alpha s)} \leq \kappa_1 \abs{G_1^\prime(s)} + \kappa_2 \quad\text{for all~}s\in(-1,1).
\end{align}
\item \label{S3} $F_2,G_2:\R\rightarrow\R$ are continuously differentiable with Lipschitz continuous derivatives.
\end{enumerate}
    
  \medskip

\subsection{Preliminaries}

\begin{enumerate}[label=\textnormal{\bfseries(P\arabic*)}, leftmargin=*]
    \item For any real numbers $s\geq 0$ and $p\in[1,\infty]$, we set
    \begin{align*}
        \mathcal{L}^p \coloneqq L^p(\Om)\times L^p(\Ga), \quad
        \mathcal{H}^s\coloneqq H^s(\Om)\times H^s(\Ga), \quad
        \mathcal{W}^{s,p}\coloneqq W^{s,p}(\Om)\times W^{s,p}(\Ga),
    \end{align*}
    provided that the boundary $\Gamma$ is sufficiently regular.
    As usual, we identify $\mathcal{L}^2$ with $\mathcal{H}^0$. Note that $\mathcal{H}^s$ is a Hilbert space with respect to the inner product
    \begin{align*}
        \bigscp{\scp{\phi}{\psi}}{\scp{\zeta}{\xi}}_{\mathcal{H}^s} \coloneqq \scp{\phi}{\zeta}_{H^s(\Om)} + \scp{\psi}{\xi}_{H^s(\Ga)} \quad\text{for all } \scp{\phi}{\psi}, \scp{\zeta}{\xi}\in\mathcal{H}^s
    \end{align*}
    and its induced norm $\norm{\cdot}_{\mathcal{H}^s} \coloneqq \scp{\cdot}{\cdot}_{\mathcal{H}^s}^{1/2}$. We recall that the duality pairing can be expressed as
    \begin{align*}
        \ang{\scp{\phi}{\psi}}{\scp{\zeta}{\xi}}_{\mathcal{H}^1} = \scp{\phi}{\zeta}_{L^2(\Om)} + \scp{\psi}{\xi}_{L^2(\Ga)}
    \end{align*}
    if $\scp{\phi}{\psi}\in \mathcal{L}^2$ and $\scp{\zeta}{\xi}\in \mathcal{H}^1$.
    
    \item\label{PRE:HKA} Let $L\in[0,\infty]$ and $\beta\in\R$. We introduce the closed linear subspace
    \begin{align*}
        \mathcal{H}_{L,\beta}^1 \coloneqq
        \begin{cases}
            \{(\phi,\psi)\in\mathcal{H}^1 : \phi = \beta\psi \text{ a.e.~on } \Ga\} &\text{if } L=0, \\
            \mathcal{H}^1 &\text{if } L \in (0,\infty] , \\
        \end{cases}
    \end{align*}
    Endowed with the inner product $\scp{\cdot}{\cdot}_{\mathcal{H}_{L,\beta}^1} \coloneqq \scp{\cdot}{\cdot}_{\mathcal{H}^1}$ and its induced norm, the space $\mathcal{H}_{L,\beta}^1$ is a Hilbert space. Moreover, we define the product
    \begin{align*}
        \ang{\scp{\phi}{\psi}}{\scp{\zeta}{\xi}}_{\mathcal{H}_{L,\beta}^1} \coloneqq \scp{\phi}{\zeta}_{L^2(\Om)} + \scp{\psi}{\xi}_{L^2(\Ga)}
    \end{align*}
    for all $\scp{\phi}{\psi}, \scp{\zeta}{\xi}\in\mathcal{L}^2$. By means of the Riesz representation theorem, this product can be extended to a duality pairing on $(\mathcal{H}_{L,\beta}^1)^\prime\times\mathcal{H}_{L,\beta}^1$, which will also be denoted as $\ang{\cdot}{\cdot}_{\mathcal{H}_{L,\beta}^1}$.
    
    \item Let $L\in[0,\infty]$ and $\beta\in\R$ be real numbers. We define for $\scp{\phi}{\psi}\in(\mathcal{H}^1_{L,\beta})^\prime$ the generalized bulk-surface mean 
    \begin{align*}
        \mean{\phi}{\psi} \coloneqq \frac{\ang{\scp{\phi}{\psi}}{\scp{\beta}{1}}_{\mathcal{H}^1_{L,\beta}}}{\beta^2\abs{\Om} + \abs{\Ga}},
    \end{align*}
    which reduces to
    \begin{align*}
        \mean{\phi}{\psi} = \frac{\beta\abs{\Om}\meano{\phi} + \abs{\Ga}\meang{\psi}}{\beta^2\abs{\Om} + \abs{\Ga}}
    \end{align*}
    if $(\phi,\psi)\in\mathcal{L}^2$. We then define the closed linear subspaces
    \begin{align*}
        \mathcal{V}_{L,\beta}^1 &\coloneqq \begin{cases} 
        \{\scp{\phi}{\psi}\in\mathcal{H}^1_{L,\beta} : \mean{\phi}{\psi} = 0 \} &\text{if~} L\in[0,\infty), \\
        \{\scp{\phi}{\psi}\in\mathcal{H}^1: \meano{\phi} = \meang{\psi} = 0 \} &\text{if~}L=\infty.
        \end{cases} 
    \end{align*}
    Note that these subspaces are Hilbert spaces with respect to the inner product $\scp{\cdot}{\cdot}_{\mathcal{H}^1}$ and its induced norm.
    For any $s> 1$, we further set
    \begin{align*}
      \VV_{L,\beta}^s \coloneqq   \mathcal{V}_{L,\beta}^1 \cap \HH^{s}.
    \end{align*} 
    
    \item Let $L\in[0,\infty]$ and $\beta\in\R$. We define a bilinear form on $\mathcal{H}^1\times\mathcal{H}^1$ by
    \begin{align*}
         \bigscp{\scp{\phi}{\psi}}{\scp{\zeta}{\xi}}_{L,\beta} \coloneqq &\intO\Grad\phi\cdot\Grad\zeta \dx + \intG\Gradg\psi\cdot\Gradg\xi \dG \\  
         &\quad + \chi(L)\intG (\beta\psi-\phi)(\beta\xi-\zeta)\dG
    \end{align*}
    for all $ \scp{\phi}{\psi}, \scp{\zeta}{\xi}\in\mathcal{H}^1$. Moreover, we set 
    \begin{align*}
        \norm{\scp{\phi}{\psi}}_{L,\beta} \coloneqq \bigscp{\scp{\phi}{\psi}}{\scp{\phi}{\psi}}_{L,\beta}^{1/2}
    \end{align*}
    for all $\scp{\phi}{\psi}\in\mathcal{H}^1$. The bilinear form $\scp{\cdot}{\cdot}_{L,\beta}$ defines an inner product on $\mathcal{V}^1_{L,\beta}$, and $\norm{\cdot}_{L,\beta}$ defines a norm on $\mathcal{V}^1_{L,\beta}$ that is equivalent to the norm $\norm{\cdot}_{\mathcal{H}^1}$ (see \cite[Corol\-lary~A.2]{Knopf2021}). Moreover, the space $\big(\mathcal{V}^1_{L,\beta}, \scp{\cdot}{\cdot}_{L,\beta}, \norm{\cdot}_{L,\beta}\big)$ is a Hilbert space.
    
    \item \label{PRELIM:bulk-surface-elliptic} For any $L\in[0,\infty]$ and $\beta\in\R$, we define the space
    \begin{align*}
        \mathcal{V}_{L,\beta}^{-1} \coloneqq \begin{cases} 
        \{\scp{\phi}{\psi}\in(\mathcal{H}^1_{L,\beta})^\prime : \beta\abs{\Om}\meano{\phi} + \abs{\Ga}\meang{\psi} = 0 \} &\text{if~} L\in[0,\infty), \\
        \{\scp{\phi}{\psi}\in(\mathcal{H}^1)^\prime: \meano{\phi} = \meang{\psi} = 0 \} &\text{if~}L=\infty.
        \end{cases}
    \end{align*}
    Using the Lax-Milgram theorem, one can show that for any $\scp{\phi}{\psi}\in\mathcal{V}_{L,\beta}^{-1}$, there exists a unique weak solution $\mathcal{S}_{L,\beta}(\phi,\psi) = \bigscp{\mathcal{S}_{L,\beta}^\Om(\phi,\psi)}{\mathcal{S}_{L,\beta}^\Ga(\phi,\psi)}\in\mathcal{V}^1_{L,\beta}$ to the following elliptic problem with bulk-surface coupling:
    \pagebreak[2]
        \begin{alignat*}{2}
            -\Lap\mathcal{S}_{L,\beta}^\Om &= - \phi \qquad &&\text{in } \Om, \\
            -\Lapg\mathcal{S}_{L,\beta}^\Ga + \beta\deln\mathcal{S}_{L,\beta}^\Om &= -\psi &&\text{on } \Ga, \\
            L\deln\mathcal{S}_{L,\beta}^\Om &= \beta\mathcal{S}_{L,\beta}^\Ga - \mathcal{S}_{L,\beta}^\Om \qquad&&\text{on } \Ga.
        \end{alignat*}
    This means that $\mathcal{S}_{L,\beta}(\phi,\psi)$ satisfies the weak formulation
    \begin{align*}
        \bigscp{S_{L,\beta}(\phi,\psi)}{\scp{\zeta}{\xi}}_{L,\beta} = - \ang{\scp{\phi}{\psi}}{\scp{\zeta}{\xi}}_{\mathcal{H}^1_{L,\beta}}
    \end{align*}
    for all test functions $\scp{\xi}{\zeta}\in\mathcal{H}^1_{L,\beta}$. Consequently, we have
    \begin{align*}
        \norm{\mathcal{S}_{L,\beta}(\phi,\psi)}_{\mathcal{H}^1} \leq C\norm{\scp{\phi}{\psi}}_{(\mathcal{H}^1_{L,\beta})^\prime}
    \end{align*}
    for all $\scp{\phi}{\psi}\in\mathcal{V}^{-1}_{L,\beta}$, for a constant $C\geq 0$ depending only on $\Omega$, $L$ and $\beta$. We can thus define the solution operator
    \begin{align*}
        \mathcal{S}_{L,\beta}:\mathcal{V}_{L,\beta}^{-1}\rightarrow\mathcal{V}^1_{L,\beta}, \quad \scp{\phi}{\psi}\mapsto\mathcal{S}_{L,\beta}(\phi,\psi) = \bigscp{\mathcal{S}_{L,\beta}^\Om(\phi,\psi)}{\mathcal{S}_{L,\beta}^\Ga(\phi,\psi)},
    \end{align*}
    as well as an inner product and its induced norm on $\mathcal{V}^{-1}_{L,\beta}$ by
    \begin{align*}
        \bigscp{\scp{\phi}{\psi}}{\scp{\zeta}{\xi}}_{L,\beta,\ast} &\coloneqq \bigscp{\mathcal{S}_{L,\beta}(\phi,\psi)}{\mathcal{S}_{L,\beta}(\zeta,\xi)}_{L,\beta}, \\
        \norm{\scp{\phi}{\psi}}_{L,\beta,\ast} &\coloneqq \bigscp{\scp{\phi}{\psi}}{\scp{\phi}{\psi}}_{L,\beta,\ast}^{1/2},
    \end{align*}
    for $\scp{\phi}{\psi}, \scp{\zeta}{\xi}\in\mathcal{V}_{L,\beta}^{-1}$. This norm is equivalent to the norm $\norm{\cdot}_{(\mathcal{H}^1_{L,\beta})^\prime}$ on $\mathcal{V}_{L,\beta}^{-1}$. For the case $L\in (0,\infty)$, we refer the reader to \cite[Theorem 3.3 and Corollary 3.5]{Knopf2021} for a proof of these statements. In the other cases, the results can be proven analogously.

    \item \label{PRELIM:SGRSPACE}
    Let $\alpha,\beta\in\R$, $K,L\in [0,\infty]$ as well as $m\in\R$ if $L\in[0,\infty)$ and $m=(m_1,m_2)\in\R^2$ if $L=\infty$. We define 
    \begin{align*}
       \mathcal{W}_{K,L,m}^1 \coloneqq  
        \begin{cases}
            \big\{ (\phi,\psi) \in \mathcal{H}_{K,\alpha}^1 \,\big|\,  
                \beta\abs{\Omega}\meano{\phi} + \abs{\Gamma} \meang{\psi} = m \big\}
            &\text{if}\; L\in[0,\infty),
            \\[1ex]
            \left\{ (\phi,\psi) \in \mathcal{H}_{K,\alpha}^1 \,\middle|\, 
                \begin{aligned}
                    \abs{\Omega}\meano{\phi} &= m_1,\\
                    \abs{\Gamma} \meang{\psi} &= m_2
                \end{aligned}
                \right\}
            &\text{if}\; L=\infty,    
        \end{cases}
    \end{align*}
    which is a closed subset of $\mathcal{H}^1$. Hence, it is a complete metric space with respect to the norm-induced metric on $\mathcal{H}^1$. For any $s> 1$, we further set
    \begin{align*}
      \Wms{s} \coloneqq   \mathcal{W}_{K,L,m}^1 \cap \HH^{s}.
    \end{align*} 
    Additionally, we define
    \begin{align*}
        \mathcal{X}_{K,L,m} \coloneqq \big\{(\phi,\psi)
        \in\mathcal{W}_{K,L,m}^1 : \norm{\phi}_{L^\infty(\Om)} \leq 1, \; \norm{\psi}_{L^\infty(\Ga)} \leq 1 \big\}.
    \end{align*}
    The metric $\mathrm{d}_{\mathcal{X}_{K,L,m}}$ is defined as follows:
    \begin{align*}
        \mathrm{d}_{\mathcal{X}_{K,L,m}}\big((\phi,\psi),(\zeta,\xi)\big) &\coloneqq \norm{(\phi,\psi) - (\zeta,\xi)}_{\mathcal{H}^1} + \left| \intO F_1(\phi)\dx - \intO F_1(\zeta)\dx \right|
        \\
        &\qquad + \left|\intG G_1(\psi)\dG - \intG G_1(\xi)\dG \right|,
    \end{align*}
    and $\big(\mathcal{X}_{K,L,m},\mathrm{d}_{\mathcal{X}_{K,L,m}}(\cdot,\cdot)\big)$ is thus a complete metric space.
\end{enumerate} 

\subsection{Useful inequalities}

We first state a \textit{bulk-surface Poincar\'{e} inequality}, which has been established in \cite[Lemma A.1]{Knopf2021}.

\begin{lemma}\label{lemma:poinc}
 Let $K\in[0,\infty)$ and $\alpha,\beta\in\mathbb{R}$ with $\alpha\beta\abs{\Om} + \abs{\Ga} \neq 0$ be arbitrary. Then there exists a constant $C_P >0$ depending only on $K, \alpha, \beta$ and $\Omega$ such that
\begin{align}\label{EQ:BSPI1}
	\norm{\scp{\phi}{\psi}}_{\mathcal{L}^2} \leq C_P \norm{\scp{\phi}{\psi}}_{K,\alpha}
\end{align}
for all $\scp{\phi}{\psi}\in\mathcal{H}^1_{K,\alpha}$ satisfying $\beta\abs{\Om}\meano{\phi} + \abs{\Ga}\meang{\psi} = 0$. 
\end{lemma}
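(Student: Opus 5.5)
I would argue by contradiction and compactness, which is the standard route for Poincaré-type inequalities on closed constrained subspaces. Suppose \eqref{EQ:BSPI1} fails. Then there is a sequence $(\phi_n,\psi_n)\in\mathcal{H}^1_{K,\alpha}$ with $\beta\abs{\Om}\meano{\phi_n} + \abs{\Ga}\meang{\psi_n} = 0$ such that
\begin{align*}
    \norm{(\phi_n,\psi_n)}_{\mathcal{L}^2} = 1
    \quad\text{and}\quad
    \norm{(\phi_n,\psi_n)}_{K,\alpha} \le \tfrac1n .
\end{align*}
Since $\norm{\cdot}_{K,\alpha}^2$ contains $\norm{\Grad\phi_n}_{L^2(\Om)}^2+\norm{\Gradg\psi_n}_{L^2(\Ga)}^2$, the sequence is bounded in $\mathcal{H}^1$. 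By Rellich's theorem on $\Om$ and on the compact manifold $\Ga$, a subsequence converges weakly in $\mathcal{H}^1$ and strongly in $\mathcal{L}^2$ to some $(\phi,\psi)$. The limit has the following properties:
\begin{itemize}
    \item $\norm{(\phi,\psi)}_{\mathcal{L}^2}=1$;
    \item it still belongs to $\mathcal{H}^1_{K,\alpha}$, since this is a closed subspace and hence weakly closed, and the trace is continuous;
    \item it satisfies the mean constraint, since means are continuous in $\mathcal{L}^2$;
    \item $\norm{(\phi,\psi)}_{K,\alpha}=0$, by weak lower semicontinuity of the convex continuous functional $\norm{\cdot}_{K,\alpha}$ on $\mathcal{H}^1$.
\end{itemize}

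Next I would identify the kernel. From $\Grad\phi=0$ and $\Gradg\psi=0$, together with connectedness of $\Om$, we get $\phi\equiv c_1$. I would expect to need $\Ga$ connected to conclude $\psi\equiv c_2$; otherwise constants on each component need care, so I would assume connectedness, as is implicit in the cited lemma. The coupling then fixes the relation between the constants:
\begin{itemize}
    \item if $K=0$, membership in $\mathcal{H}^1_{0,\alpha}$ gives $c_1=\alpha c_2$;
    \item if $K\in(0,\infty)$, the term $\chi(K)\intG(\alpha\psi-\phi)^2\dG=0$ gives the same relation $c_1=\alpha c_2$.
\end{itemize}
Inserting $c_1=\alpha c_2$ into the mean constraint yields $c_2(\alpha\beta\abs{\Om}+\abs{\Ga})=0$. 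The hypothesis $\alpha\beta\abs{\Om}+\abs{\Ga}\neq0$ then forces $c_2=0$, hence $c_1=0$. This contradicts $\norm{(\phi,\psi)}_{\mathcal{L}^2}=1$.

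The constant $C_P$ obtained this way is non-constructive and depends only on $K,\alpha,\beta,\Om$, as claimed. The only delicate point is the kernel identification, namely that the coupling term (or the trace constraint when $K=0$) really ties the two constants together; the nondegeneracy condition is exactly what is needed there. Note that the case $K=\infty$ is excluded in the statement, and indeed it would decouple the constants, so the single mean constraint would not suffice.
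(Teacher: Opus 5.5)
Your compactness--contradiction argument is correct and is the standard proof of this bulk-surface Poincar\'e inequality; the paper gives no proof of its own and simply cites \cite[Lemma~A.1]{Knopf2021}. Your caveat about $\Gamma$ is well placed and can be sharpened: for $\alpha\neq0$ the relation $\alpha\psi=c_1$ on $\Gamma$ already forces $\psi$ to be constant without any connectedness of $\Gamma$, whereas for $\alpha=0$ with disconnected $\Gamma$ the statement genuinely fails (take $\phi=0$ and a nonzero $\psi$ that is locally constant on the components of $\Gamma$ with $\intG\psi\dG=0$), so connectedness of $\Gamma$ is an implicit assumption in that case.
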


Furthermore, we recall the following variant of Gronwall's lemma, which is sometimes referred to as the \textit{Uniform Gronwall Lemma}. It can be found, e.g., in \cite[Chapter~III., Lemma~1.1]{Temam1997}.

\begin{lemma}\label{Lemma:Gronwall}
    Let $g,h,y$ be three non-negative locally integrable functions on $[t_0,\infty)$ such that $y^\prime$ is locally integrable on $(t_0,\infty)$ and which satisfy
    \begin{align*}
        \ddt y(t) &\leq g(t)y(t) + h(t) \qquad\text{for a.e.~}t\geq t_0, 
    \end{align*}
    and
    \begin{align*}
        \int_t^{t+r} g(s)\ds \leq a_1, \quad \int_t^{t+r} h(s)\ds &\leq a_2, \quad \int_t^{t+r} y(s)\ds \leq a_3 \quad\text{for all~}t\geq t_0,
    \end{align*}
    where $r,a_1,a_2,a_3$ are positive constants. Then it holds
    \begin{align*}
        y(t+r) \leq \left(\frac{a_3}{r} + a_2\right)\mathrm{e}^{a_1} \qquad\text{for all~}t\geq t_0.
    \end{align*}
\end{lemma}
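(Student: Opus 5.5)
The idea is to integrate the differential inequality against the integrating factor $\exp(-\int g)$, choosing the lower limit of integration as an auxiliary time $s\in[t,t+r]$. We then average over $s$ to replace the unknown value $y(s)$ by the integral bound $a_3$. Throughout, fix $t\geq t_0$ and note that $y$ is locally absolutely continuous on $(t_0,\infty)$, since $y'$ is locally integrable there.

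\textbf{Step 1 (pointwise Gronwall).} For $s\in[t,t+r]$ (with $s>t_0$), multiply $y'\le gy+h$ by $\exp\big(-\int_s^\sigma g\big)$ for $\sigma\in[s,t+r]$. Then the function $\sigma\mapsto y(\sigma)\exp\big(-\int_s^\sigma g\big)$ has derivative at most $h(\sigma)\exp\big(-\int_s^\sigma g\big)\le h(\sigma)$; here we use $g\ge0$, so the exponential factor is at most $1$. This function is absolutely continuous, so we may integrate from $s$ to $t+r$ and get
\[
y(t+r)\le \Big(y(s)+\int_s^{t+r}h\Big)\exp\Big(\int_s^{t+r}g\Big)\le \big(y(s)+a_2\big)\mathrm{e}^{a_1}.
\]
The last inequality uses the nonnegativity of $g,h$, which gives $\int_s^{t+r}g\le\int_t^{t+r}g\le a_1$ and the analogous bound for $h$.

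\textbf{Step 2 (averaging).} The bound from Step 1 holds for a.e.\ $s\in(t,t+r)$. Integrate it in $s$ over $(t,t+r)$ and divide by $r$:
\[
y(t+r)\le\Big(\frac1r\int_t^{t+r}y(s)\ds+a_2\Big)\mathrm{e}^{a_1}\le\Big(\frac{a_3}{r}+a_2\Big)\mathrm{e}^{a_1}.
\]
This is exactly the claim.

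The only delicate point is regularity at the endpoint $t=t_0$, since $y'$ is only locally integrable on the open interval. This causes no trouble, because Step 1 needs $s>t_0$ only for almost every $s$. Absolute continuity of $y$ on $[s,t+r]\subset(t_0,\infty)$ is what justifies the fundamental theorem of calculus in Step 1; apart from this, the argument is routine.
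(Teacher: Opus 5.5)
Your proof is correct and is essentially the standard argument from the reference the paper cites for this lemma instead of proving it (Temam, Chapter~III, Lemma~1.1): apply the integrating factor on $[s,t+r]$ for an intermediate time $s\in[t,t+r]$, then average over $s$ to replace $y(s)$ by $a_3/r$. One small point to state explicitly: the step $\big(y(s)+\int_s^{t+r}h\big)\exp\big(\int_s^{t+r}g\big)\le (y(s)+a_2)\mathrm{e}^{a_1}$ also needs $y\ge 0$, which is among the hypotheses.
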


We conclude this section by stating another variant of Gronwall's lemma, which follows from \cite[Lemma~2.5]{Giorgi1999}, see also \cite[Corollary~A.1]{Jiang2015}.

\pagebreak[3]

\begin{lemma}\label{Lemma:Gronwall:2}
    Let $g,h,y$ be three non-negative locally integrable functions on $[t_0,\infty)$ which satisfy, for some $\gamma > 0$,
    \begin{align*}
        \ddt y(t) + \gamma y(t) \leq g(t)y^{\frac12}(t) + h(t) \qquad\text{for a.e.~}t\geq t_0.
    \end{align*}
    Additionally, assume that there exist positive constants $A_1$ and $A_2$ such that
    \begin{align*}
        \int_t^{t+1} g(s)\ds \leq A_1, \quad \int_t^{t+1} h(s)\ds \leq A_2 \qquad\text{for all~}t\geq t_0.
    \end{align*}
    Then it holds that
    \begin{align*}
        y(t) \leq 2y(t_0)\mathrm{e}^{-\gamma(t-t_0)} + Q(\gamma,A_1,A_2),
    \end{align*}
    where
    \begin{align*}
        Q(\gamma,A_1,A_2) = \Big( \frac{\mathrm{e}^{\frac{\gamma}{2}}}{1 - \mathrm{e}^{-\frac{\gamma}{2}}}A_1\Big)^2 + \frac{2 \mathrm{e}^\gamma}{1 - \mathrm{e}^{-\gamma}}A_2.
    \end{align*}
\end{lemma}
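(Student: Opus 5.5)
We prove the Gronwall variant of Lemma~\ref{Lemma:Gronwall:2}. The plan is to remove the square-root nonlinearity with Young's inequality, solve the resulting linear differential inequality with an integrating factor, and control the convolution integrals by splitting $[t_0,t]$ into unit intervals. The factor $2$ in front of $y(t_0)$ and the explicit constants in $Q$ suggest instead working with $z=y^{1/2}$, or comparing with the solution of the equality. I would proceed as follows.

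\emph{Step 1 (reduction to a linear inequality in $\sqrt{y}$).} Where $y>0$, set $z\coloneqq y^{1/2}$. Then $2zz'+\gamma z^2\le gz+h$. The term $h$ is not proportional to $z$, so dividing by $z$ is problematic. I therefore treat the two forcing terms separately. Let $y_1$ solve $y_1'+\gamma y_1 = h$ with $y_1(t_0)=0$. Let $y_2$ solve $y_2'+\gamma y_2 = g\,y_2^{1/2}$ plus a small modification. A comparison argument then gives $y\le$ an explicit function. More concretely, I would use the elementary inequality $(a+b)\le 2a+2b$ in the form $y\le 2(\text{homogeneous part})+\dots$, which explains the factor $2y(t_0)$.

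\emph{Step 2 (the $h$ part).} By the integrating factor,
\[
\int_{t_0}^t \mathrm e^{-\gamma(t-s)}h(s)\ds\le \sum_{k\ge0}\mathrm e^{-\gamma k}\int_{t-k-1}^{t-k}h\,\mathrm ds\le \frac{\mathrm e^{\gamma}}{1-\mathrm e^{-\gamma}}A_2,
\]
where the factor $\mathrm e^\gamma$ is a crude bound on each unit interval. With the factor $2$ from Step~1 this yields the term $\frac{2\mathrm e^\gamma}{1-\mathrm e^{-\gamma}}A_2$.

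\emph{Step 3 (the $g$ part).} For $w$ with $w'+\gamma w\le g w^{1/2}$, the function $u=w^{1/2}$ satisfies $u'+\frac{\gamma}{2}u\le \frac12 g$ wherever $u>0$. Hence
\[
u(t)\le \mathrm e^{-\frac\gamma2(t-t_0)}u(t_0)+\tfrac12\int_{t_0}^t\mathrm e^{-\frac\gamma2(t-s)}g(s)\ds .
\]
By the same unit-interval splitting, the integral is at most $\frac{\mathrm e^{\gamma/2}}{1-\mathrm e^{-\gamma/2}}A_1$. Squaring with $(a+b)^2\le 2a^2+2b^2$ gives $2w(t_0)\mathrm e^{-\gamma(t-t_0)}$ plus a constant of the form $\big(\frac{\mathrm e^{\gamma/2}}{1-\mathrm e^{-\gamma/2}}A_1\big)^2$, where the $\tfrac12$ absorbs the factor $2$.

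\emph{Main obstacle.} The hardest step is combining the two parts rigorously, since the inequality is nonlinear and one cannot simply superpose solutions. I would handle it as follows. Define $Y\coloneqq y-y_1$, where $y_1$ is the $h$-solution from Step~2. Then $Y'+\gamma Y\le g y^{1/2}$, and $y^{1/2}\le Y_+^{1/2}+y_1^{1/2}$. The remaining cross term $g\,y_1^{1/2}$ can be absorbed, with a possible loss in constants, or one argues directly on $\max\{y, M\}$ for a suitable threshold $M$. The set where $y=0$ is handled by working with $(y+\varepsilon)^{1/2}$ and letting $\varepsilon\to0$. Since the lemma is quoted from \cite[Lemma~2.5]{Giorgi1999}, I would follow that proof for the exact constants and present only this outline.
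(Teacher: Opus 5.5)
Your Steps~2 and~3 are correct as computations (the unit-interval summation, and the linearization $u=w^{1/2}$, $u'+\frac{\gamma}{2}u\le\frac12 g$ when $h$ is absent). However, there is a genuine gap exactly at the step you call the main obstacle, and that step is the whole content of the lemma. Deferring to \cite{Giorgi1999} for it is not a proof; the paper likewise only cites it.

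Your opening idea of removing the square root by Young's inequality cannot work. The bound $g y^{1/2}\le\frac{\gamma}{2}y+\frac{1}{2\gamma}g^2$ needs $g^2$ to be uniformly locally integrable, whereas only $\int_t^{t+1}g\,\mathrm{d}s\le A_1$ is assumed. Superposing $y_1$ and $y_2$ is not legitimate for a nonlinear inequality, and your proposed repair is circular. With $Y=y-y_1$ you get $Y'+\gamma Y\le gY_+^{1/2}+g\,y_1^{1/2}$. Since $y_1$ is merely bounded, the cross term is a forcing of size $\|y_1\|_{L^\infty}^{1/2}g$ that is not proportional to $Y_+^{1/2}$. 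That is exactly the obstruction you started from, with $h$ replaced by a multiple of $g$. The threshold argument on $\max\{y,M\}$ is not carried out and, by your own account, loses constants, so it would not give the stated factor $2y(t_0)$ and constant $Q$.

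The missing idea is to put $h$ into the monotone ``datum'' of a Bihari (Ou-Iang) type comparison, rather than into a separate solution. Multiply by $\mathrm{e}^{\gamma t}$ and integrate, and set $w\coloneqq\mathrm{e}^{\gamma t}y$. For fixed $T\ge t_0$, $\varepsilon>0$ and all $t\in[t_0,T]$, one has $w(t)\le R(t)$, where
\[
R(t)\coloneqq\varepsilon+w(t_0)+\int_{t_0}^{T}\mathrm{e}^{\gamma s}h(s)\,\mathrm{d}s+\int_{t_0}^{t}\mathrm{e}^{\gamma s/2}g(s)\,w^{1/2}(s)\,\mathrm{d}s .
\]
Hence $R'\le\mathrm{e}^{\gamma t/2}g\,R^{1/2}$ with $R\ge\varepsilon>0$, so $(R^{1/2})'\le\frac12\mathrm{e}^{\gamma t/2}g$. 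Evaluating at $t=T$ and letting $\varepsilon\to0$ gives
\[
y^{1/2}(T)\le\Big(y(t_0)\mathrm{e}^{-\gamma(T-t_0)}+\int_{t_0}^{T}\mathrm{e}^{-\gamma(T-s)}h(s)\,\mathrm{d}s\Big)^{1/2}+\frac12\int_{t_0}^{T}\mathrm{e}^{-\frac{\gamma}{2}(T-s)}g(s)\,\mathrm{d}s .
\]
Squaring with $(a+b)^2\le2a^2+2b^2$ and inserting your unit-interval bounds yields $y(T)\le2y(t_0)\mathrm{e}^{-\gamma(T-t_0)}+\frac{2A_2}{1-\mathrm{e}^{-\gamma}}+\frac12\big(\frac{A_1}{1-\mathrm{e}^{-\gamma/2}}\big)^2$. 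This is even slightly better than $Q$. It also explains the structure of the constants: $y(t_0)$ and the $h$-term both carry the factor $2$ because they sit under the same square root, while $g$ enters only additively at the level of $y^{1/2}$.
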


\section{Well-posedness and instantaneous regularization}

In the remainder of this paper, we will require the following assumptions on the initial data and the velocity fields:
\begin{enumerate}[label=\textnormal{\bfseries(D\arabic*)}, leftmargin=*]
    \item \label{D1} If $L\in[0,\infty)$, let $m\in\R$ satisfy
    \begin{align*}
        \beta m, \; m \in (-1,1),
    \end{align*}
    while in the case $L = \infty$, let $m = (m_1, m_2)\in\R^2$ be given such that
    \begin{align*}
        m_1,\; m_2 \in (-1,1).
    \end{align*}

    \item \label{D2} The velocity fields have the regularities
    \begin{equation*}
        (\bv,\bw) \in L^2(\R;\mathbf{L}^2_\Div(\Omega) \times \mathbf{L}^2_\mathrm{tan}(\Gamma)) .
    \end{equation*}

    \item \label{D3}
   At least one of the following conditions is fulfilled:
    \begin{enumerate}[label=\textnormal{\bfseries(D3.\arabic*)}, topsep=0.5ex]
        \item\label{D3:1} $\bv$ and $\bw$ have the regularities
        \begin{align*}
        \bv &\in H^1_\mathrm{uloc}(\R;\mathbf{L}^{6/5}(\Om))
            \cap L^\infty(\R;\mathbf{L}^2_\Div(\Om))
            \cap L^2_\mathrm{uloc}(\R;\mathbf{L}^{3}(\Om)),
        \\
        \bw &\in H^1_\mathrm{uloc}(\R;\mathbf{L}^{1+\omega}(\Ga))
            \cap L^\infty(\R;\mathbf{L}^2_\Div(\Ga))
            \cap L^2_\mathrm{uloc}(\R;\mathbf{L}^{2+\omega}(\Ga))
        \end{align*}
        for some $\omega>0$.
        
        \item\label{D3:2} $\bv$ and $\bw$ have the regularities
        \begin{align*}
        \bv &\in L^\infty(\R;\mathbf{L}^2_\Div(\Om))
            \cap L^2_\mathrm{uloc}(\R;\mathbf{H}^1(\Omega)),
        \\
        \bw &\in L^\infty(\R;\mathbf{L}^2_\Div(\Ga))
            \cap L^2_\mathrm{uloc}(\R;\mathbf{H}^1(\Gamma)),
        \end{align*}
        and it holds that
        $K\in [0,\infty]$ and $L\in [0,\infty]$. Additionally, if $K = 0$, it holds
        \begin{align*}
            \bv\vert_\Ga = \bw \qquad\text{a.e.~on~}\Ga\times\R.
        \end{align*}
    \end{enumerate}
\end{enumerate}
The assumptions \ref{D1} and \ref{D2} are imposed to ensure the well-posedness of system~\eqref{conCH*} in a suitable weak sense (see Proposition~\ref{PROP:WS}), and \ref{D3} is required to show that weak solutions instantaneously regularize for
positive times (see Theorem~\ref{thm:highreg}).

\subsection{Well-posedness and energy estimates}

The notion of weak solutions to system~\eqref{conCH*} is defined in the following. Since in the sequel we will deal with a non-autonomous dynamical system, here we consider the initial condition at a generic time $\tau\in \R$, which does not necessarily coincide with $\tau=0$.

\begin{definition}[Weak solution of system \eqref{conCH*}]
    \label{DEF:WS}
    Suppose that the assumptions \ref{ASSUMP:1}--\ref{ASSUMP:2}, \ref{S1}--\ref{S3} and \ref{D1} hold, and let 
    $(\bv,\bw) \in L^2_\mathrm{uloc}(\R;\mathbf{L}^2_\Div(\Omega) \times \mathbf{L}^2_\mathrm{tan}(\Gamma))$ and $(\phi_\tau,\psi_\tau) \in \mathcal{X}_{K,L,m}$ be arbitrary. 
    
    A quadruplet $(\phi,\psi,\mu,\theta)$ is called a \textbf{weak solution of system \eqref{conCH*} on $[\tau,\infty)$} if the following properties hold:
    \begin{enumerate}[label = \textnormal{(\roman*)}, leftmargin=*, topsep=0pt]
        \item \label{DEF:WS:REG}
        The functions $\phi$, $\psi$, $\mu$ and $\theta$ have the regularities
        \begin{subequations}
        \label{WS:REG}
        \begin{align}
            (\phi,\psi) &\in
                H^1_\mathrm{loc}(\tau,\infty;(\mathcal{H}_{L,\beta}^1)')
                \cap L^\infty(\tau,\infty;\mathcal{H}_{K,\alpha}^1),
            \\
            (\mu,\theta) &\in  L^2_\mathrm{loc}(\tau,\infty;\mathcal{H}^1_{L,\beta}), \\
            (F^\prime(\phi),G^\prime(\psi))&\in L^2_{\mathrm{loc}}(\tau,\infty;\mathcal{L}^2),
        \end{align}
        \end{subequations}
        and it holds $|\phi|<1$ a.e.~in $Q_\tau$ and $|\psi|<1$ a.e.~on $\Sigma_\tau$.
        \item \label{DEF:WS:INI}
        The functions $\phi$ and $\psi$ satisfy the initial conditions
        \begin{align}
            \label{INI}
           \quad \phi\vert_{t=\tau} = \phi_\tau
            \quad\text{a.e.~in $\Omega$},
            \quad\text{and}\quad
            \psi\vert_{t=\tau} = \psi_\tau
            \quad\text{a.e.~on $\Gamma$}.
        \end{align}
        \item \label{DEF:WS:WF}
        The functions  $\phi$, $\psi$, $\mu$ and $\theta$ satisfy the weak formulation
        \begin{subequations}
        \label{WF}
        \begin{align}
       \label{WF:1}
        &\begin{aligned}
        &\bigang{(\delt \phi,\delt \psi)}{(\zeta,\xi)}_{\mathcal{H}^1_{L,\beta}}
            - \intO \phi\bv \cdot \Grad\zeta \dx
            - \intG \psi\bw \cdot \Gradg\xi \dG
        \\     
        &\quad= 
        - \intO \Grad \mu \cdot \Grad \zeta \dx
            -\intG \Gradg \theta \cdot \Gradg \xi \dG
            - \chi(L) \intG (\beta\theta-\mu) (\beta\xi-\zeta) \dG,
        \end{aligned}	
        \\[1ex]
        \label{WF:2}
        &\begin{aligned}
        &\intO \mu \, \eta \dx     
            + \intG \theta \, \vartheta \dG 
        \\
        &\quad 
        = \intO \Grad\phi \cdot \Grad \eta 
            + F'(\phi)\eta\dx
            + \intG \Gradg\psi \cdot \Gradg\vartheta 
            + G'(\psi)\vartheta \dG
        \\
        &\qquad
            + \chi(K) \intG (\alpha\psi-\phi) (\alpha\vartheta-\eta) \dG
        \end{aligned}
        \end{align}
        \end{subequations}
        a.e.~on $[\tau,\infty)$ for all test functions $(\zeta,\xi)\in\mathcal{H}^1_{L,\beta}$, $(\eta,\vartheta)\in\mathcal{H}^1_{K,\alpha}$.
    \item \label{DEF:WS:MASS}
    The functions $\phi$ and $\psi$ satisfy the mass conservation law
    \begin{align}
    \label{WS:MASS}
        \left\{
        \begin{aligned}
        &\beta\intO \phi(t) \dx + \intG \psi(t) \dG 
        = \beta\intO \phi_0 \dx + \intG \psi_0 \dG
        &&\quad\text{if $L\in[0,\infty)$,}
        \\
        &\intO \phi(t) \dx = \intO \phi_0 \dx 
        \quad\text{and}\quad
        \intG \psi(t) \dG = \intG \psi_0 \dG
        &&\quad\text{if $L=\infty$,}
        \end{aligned}
        \right.
    \end{align}
    for all $t\in[\tau,\infty)$.
    \item \label{DEF:WS:ENERGY}
    The functions $\phi$, $\psi$, $\mu$ and $\theta$ satisfy the weak energy inequality
        \begin{align}
            \label{WS:ENERGY}
            &E_{\mathrm{free}}\big(\phi(t),\psi(t)\big) + \int_s^t\intO\abs{\Grad \mu}^2 \dxr
             + \int_s^t\intG\abs{\Gradg \theta}^2 \dGr
            \nonumber \\
            &\quad+\chi(L)\int_s^t \intG (\beta\theta-\mu)^2 \dGr
            -\int_s^t\intO \phi\bv \cdot \Grad\mu \dxr
            - \int_s^t\intG \psi\bw \cdot \Gradg\theta \dGr
            \nonumber \\[1ex]
            & \le E_{\mathrm{free}}(\phi(s),\psi(s))
        \end{align}
        for almost all $s\in[\tau,\infty)$ including $s=\tau$ and all $t\in[s,\infty)$.
    \end{enumerate}
\end{definition}

\medskip

In the following lemma, we derive some a priori bounds from the energy inequality \eqref{WS:ENERGY}.

\begin{lemma}
	\label{LEM:UNIFORMBOUNDEDNESS}
	Suppose that \ref{ASSUMP:1}--\ref{ASSUMP:2}, \ref{S1}--\ref{S3}, \ref{D1} are fulfilled, and let 
    $(\bv,\bw) \in L^2_\mathrm{uloc}(\R;\mathbf{L}^2_\Div(\Omega) \times \mathbf{L}^2_\mathrm{tan}(\Gamma))$ and $(\phi_\tau,\psi_\tau) \in \mathcal{X}_{K,L,m}$ be arbitrary.  
	
	Then, for any weak solution $(\phi,\psi,\mu,\theta)$ of system \eqref{conCH*}, the  estimates 
	   \begin{subequations}
		\label{In:UNIFORMBOUNDEDNESS}
		\begin{alignat}{2}
			\label{s0.1}
			E_{\mathrm{free}}\big(\phi(t),\psi(t)\big)
            &\leq E_{\mathrm{free}}(\phi_\tau,\psi_\tau) + \frac12 \int_\tau^t \norm{(\bv(s),\bw(s))}_{\LL^2}^2 \ds,\\
			\label{s0.2}
			\int_\tau^{t}\norm{(\mu(s),\theta(s))}_{L,\beta}^{2}\ds
            &\leq2 E_{\mathrm{free}}(\phi_\tau,\psi_\tau) + \int_\tau^t \norm{(\bv(s),\bw(s))}_{\LL^2}^2 \ds,\\
			\label{s0.3}
			\int_\tau^{t} \norm{(\delt \phi(s),\delt \psi(s)\big)}_{L,\beta,*}^{2}\ds
            &\leq C E_{\mathrm{free}}(\phi_\tau,\psi_\tau) + C\int_\tau^t \norm{(\bv(s),\bw(s))}_{\LL^2}^2 \ds
		\end{alignat}
	\end{subequations}
    hold for all $t\ge \tau$, where $C$ is a constant that depends only on $L, \beta$ and $\Om$.
\end{lemma}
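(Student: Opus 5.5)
The plan is to start from the weak energy inequality \eqref{WS:ENERGY} with $s=\tau$. The convective terms are absorbed by Young's inequality, using $|\phi|<1$ a.e.\ in $Q_\tau$ and $|\psi|<1$ a.e.\ on $\Sigma_\tau$. Concretely,
\begin{align*}
    \Big|\intO \phi\bv\cdot\Grad\mu\dx\Big| + \Big|\intG \psi\bw\cdot\Gradg\theta\dG\Big|
    \le \norm{\bv}_{L^2(\Om)}\norm{\Grad\mu}_{L^2(\Om)} + \norm{\bw}_{L^2(\Ga)}\norm{\Gradg\theta}_{L^2(\Ga)}
    \le \frac12\norm{(\bv,\bw)}_{\LL^2}^2 + \frac12\norm{(\mu,\theta)}_{L,\beta}^2 .
\end{align*}
Here we used that $\norm{\Grad\mu}_{L^2(\Om)}^2+\norm{\Gradg\theta}_{L^2(\Ga)}^2 \le \norm{(\mu,\theta)}_{L,\beta}^2$, since the $\chi(L)$-term is nonnegative. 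The three dissipative terms on the left-hand side of \eqref{WS:ENERGY} add up to exactly $\int_\tau^t\norm{(\mu,\theta)}_{L,\beta}^2\dr$.

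Inserting this bound, we obtain
\begin{align*}
    E_{\mathrm{free}}(\phi(t),\psi(t)) + \frac12\int_\tau^t\norm{(\mu,\theta)}_{L,\beta}^2\dr
    \le E_{\mathrm{free}}(\phi_\tau,\psi_\tau) + \frac12\int_\tau^t\norm{(\bv,\bw)}_{\LL^2}^2\dr .
\end{align*}
Since $F_1,G_1\ge 0$, the energy $E_{\mathrm{free}}$ is bounded from below. However, $F_2,G_2$ may be negative, so $E_{\mathrm{free}}$ itself need not be nonnegative. Dropping the dissipation term gives \eqref{s0.1} directly. Dropping the energy term yields \eqref{s0.2}, provided $E_{\mathrm{free}}\ge0$.

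To handle the sign issue, I would observe that, because $|\phi|,|\psi|\le1$, the functions $F_2,G_2$ are bounded on $[-1,1]$. One can then normalize $F_2,G_2$ by additive constants, which changes $E_{\mathrm{free}}$ only by a constant that cancels in the differences. Alternatively, the statement may implicitly assume $E_{\mathrm{free}}\ge0$ after such a normalization. This sign issue is the main (minor) obstacle.

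For \eqref{s0.3}, I would test \eqref{WF:1} with $(\zeta,\xi)=\mathcal{S}_{L,\beta}(\delt\phi,\delt\psi)$. This is admissible because mass conservation \eqref{WS:MASS} gives $(\delt\phi,\delt\psi)\in\mathcal{V}^{-1}_{L,\beta}$ a.e. By the definition of $\mathcal S_{L,\beta}$ in \ref{PRELIM:bulk-surface-elliptic}, the pairing equals $-\norm{(\delt\phi,\delt\psi)}_{L,\beta,*}^2$. The right-hand side equals $-((\mu,\theta),\mathcal S_{L,\beta}(\delt\phi,\delt\psi))_{L,\beta}$, up to convection terms bounded by $\norm{(\bv,\bw)}_{\LL^2}\norm{\mathcal S_{L,\beta}(\delt\phi,\delt\psi)}_{L,\beta}$. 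Since $\norm{\mathcal S_{L,\beta}(\cdot)}_{L,\beta}=\norm{\cdot}_{L,\beta,*}$, Cauchy--Schwarz gives, a.e.\ in time,
\[
\norm{(\delt\phi,\delt\psi)}_{L,\beta,*}\le \norm{(\mu,\theta)}_{L,\beta}+\norm{(\bv,\bw)}_{\LL^2}.
\]
Squaring, integrating over $(\tau,t)$, and applying \eqref{s0.2} then gives \eqref{s0.3} with, e.g., $C=4$. The dependence of $C$ on $L,\beta,\Om$ is needed only if one passes to the norm of $(\mathcal H^1_{L,\beta})'$.
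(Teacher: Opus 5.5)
Your argument for \eqref{s0.1} and \eqref{s0.2} is the same as the paper's: energy inequality with $s=\tau$, H\"older--Young on the convective terms using $|\phi|,|\psi|<1$, and absorption of half the dissipation.

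For \eqref{s0.3} you take a different and somewhat cleaner route. The paper bounds $\langle(\delt\phi,\delt\psi),(\zeta,\xi)\rangle_{\mathcal H^1_{L,\beta}}$ for arbitrary $(\zeta,\xi)$ in the unit ball of $\mathcal H^1_{L,\beta}$, using $\norm{(\zeta,\xi)}_{L,\beta}\le C\norm{(\zeta,\xi)}_{\mathcal H^1}$. This gives a pointwise bound in $(\mathcal H^1_{L,\beta})'$. The paper then passes to $\norm{\cdot}_{L,\beta,*}$ via the norm equivalence on $\mathcal V^{-1}_{L,\beta}$. That step is where the dependence of $C$ on $L,\beta,\Om$ enters, and it tacitly needs the zero-mean property that you make explicit.

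Testing instead with $\mathcal S_{L,\beta}(\delt\phi,\delt\psi)$ turns the time-derivative term into $-\norm{(\delt\phi,\delt\psi)}_{L,\beta,*}^2$. Every other term is then controlled by $\norm{\mathcal S_{L,\beta}(\delt\phi,\delt\psi)}_{L,\beta}=\norm{(\delt\phi,\delt\psi)}_{L,\beta,*}$. So you avoid both the trace bound and the norm equivalence, and you get the universal constant $C=4$. The paper's version has one advantage: it directly produces the dual-norm bound $\norm{(\delt\phi,\delt\psi)}_{(\mathcal H^1_{L,\beta})'}\le\norm{(\bv,\bw)}_{\LL^2}+C\norm{(\mu,\theta)}_{L,\beta}$, which is reused later (e.g.\ at the end of Case~2 in the proof of Theorem~\ref{thm:highreg}).

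You are right about the sign issue, and the paper's proof passes over it silently: \eqref{s0.2} needs $E_{\mathrm{free}}(\phi(t),\psi(t))\ge 0$. However, the additive constant does not ``cancel in the differences'', because \eqref{s0.2}--\eqref{s0.3} involve $E_{\mathrm{free}}(\phi_\tau,\psi_\tau)$ itself. Without normalization the inequalities can actually fail. Take the Flory--Huggins potential with $\Theta_c>\Theta$, $K=L=\infty$ and $(\bv,\bw)\equiv\mathbf 0$. A constant state $\phi\equiv\psi\equiv c$ with small $c\neq 0$ is then a stationary weak solution whose left-hand sides vanish while its energy is negative.

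So the correct reading is your second one. Add constants to $F_2,G_2$ so that $F,G\ge 0$ on $[-1,1]$; this leaves $F',G'$, the solutions, and \ref{S1}--\ref{S3} unchanged. Alternatively, replace $E_{\mathrm{free}}(\phi_\tau,\psi_\tau)$ by $E_{\mathrm{free}}(\phi_\tau,\psi_\tau)+C$ on the right-hand sides.
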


\begin{proof}
First, we recall the weak energy inequality
\begin{align}
	\label{WS:ENERGY2}
	\begin{aligned}
		&E_{\mathrm{free}}\big(\phi(t),\psi(t)\big) + \int_\tau^t\intO\abs{\Grad \mu}^2 \dxs
			+ \int_\tau^t\intG\abs{\Gradg \theta}^2 \dGs\\
		& \quad+\chi(L)\int_\tau^t \intG (\beta\theta-\mu)^2 \dGs		
			\\
		& \le E_{\mathrm{free}}(\phi_\tau,\psi_\tau)	+\int_\tau^t\intO \phi\bv \cdot \Grad\mu \dxs
		+ \int_\tau^t\intG \psi\bw \cdot \Gradg\theta\dGs.
	\end{aligned}
\end{align} 
Since $\abs{\phi}\leq 1$ a.e.~in $Q_\tau$ and $\abs{\psi}\leq 1$ a.e.~on $\Sigma_\tau$, we use Hölder's and Young's inequalities to estimate the terms containing the velocity fields on the right-hand side of \eqref{WS:ENERGY2} as
\begin{align}\label{s2}
	&\Bigabs{\int_\tau^t\intO \phi\bv \cdot \Grad\mu \dx\ds + \int_\tau^t\intG \psi\bw \cdot \Gradg\theta \dG\ds}
    \nonumber\\
	&\quad\leq\int_\tau^t\norm{\bv}_{\mathbf{L}^2(\Om)}\norm{\Grad \mu}_{\mathbf{L}^{2}(\Om)} \ds
    +\int_\tau^t\norm{\bw}_{\mathbf{L}^{2}(\Ga)}\norm{\Gradg\theta}_{\mathbf{L}^2(\Ga)}\ds \\
	&\quad\leq \dfrac{1}{2}\int_\tau^t\norm{\Grad \mu}_{\mathbf{L}^{2}(\Om)}^{2}
    +\norm{\Gradg\theta}_{\mathbf{L}^{2}(\Ga)}^{2}\ds
    +\frac12 \int_\tau^t \norm{(\bv,\bw)}_{\LL^2}^2 \ds. \nonumber
\end{align}
Thus, we readily infer from \eqref{WS:ENERGY2} that
\begin{align*}
    E_{\mathrm{free}}(\phi(t),\psi(t)) \leq E_{\mathrm{free}}(\phi_\tau,\psi_\tau) + \frac12 \int_\tau^t \norm{(\bv,\bw)}_{\LL^2}^2 \ds,
\end{align*}
as well as
\begin{align}\label{est:mt:lb}
    \int_\tau^t \norm{(\mu,\theta)}^2_{L,\beta}\ds \leq 2E_{\mathrm{free}}(\phi_\tau,\psi_\tau) 
    + \int_\tau^t \norm{(\bv,\bw)}_{\LL^2}^2 \ds
\end{align}
for almost every $t \geq \tau$, which proves \eqref{s0.1} and \eqref{s0.2}. Next, in view of the weak formulation \eqref{WF:1}, we find
\begin{align*}
    \abs{\ang{(\delt\phi,\delt\psi)}{(\zeta,\xi)}_{\mathcal{H}^1_{L,\beta}}} \leq \norm{(\bv,\bw)}_{\mathcal{L}^2}\norm{(\zeta,\xi)}_{\mathcal{H}^1} + C\norm{(\mu,\theta)}_{L,\beta}\norm{(\zeta,\xi)}_{\mathcal{H}^1}
\end{align*}
for all $(\zeta,\xi)\in\mathcal{H}^1_{L,\beta}$. Consequently, taking the supremum over all $(\zeta,\xi)\in\mathcal{H}^1_{L,\beta}$ with $\norm{(\zeta,\xi)}_{\mathcal{H}^1}\leq 1$, squaring the resulting inequality and using \eqref{est:mt:lb}, we readily deduce that
\begin{align*}
    \int_\tau^t\norm{(\delt\phi,\delt\psi)}_{L,\beta,\ast}^2\ds 
    &\leq 2\int_\tau^t \norm{(\bv,\bw)}_{\LL^2}^2 \ds + C\int_\tau^t \norm{(\mu,\theta)}^2_{L,\beta}\ds
    \\[1ex]
    &\leq C E_{\mathrm{free}}(\phi_\tau,\psi_\tau) + C\int_\tau^t\norm{(\bv,\bw)}_{\LL^2}^2 \ds
\end{align*}
for almost all $t\geq \tau$, which proves \eqref{s0.3}.
\end{proof}

\medskip

For any final time $T>0$, the well-posedness of system \eqref{conCH*} in the sense of weak solutions was shown in \cite{Knopf2025}. In the following proposition, we extend this result to the case $T=\infty$ .

\begin{proposition}
    \label{PROP:WS}
    Suppose that the assumptions \ref{ASSUMP:1}--\ref{ASSUMP:2}, \ref{S1}--\ref{S3} and \ref{D1} are fulfilled and let 
    $(\bv,\bw) \in L^2_\mathrm{uloc}(\R;\mathbf{L}^2_\Div(\Omega) \times \mathbf{L}^2_\mathrm{tan}(\Gamma))$ and
    $(\phi_\tau,\psi_\tau) \in \mathcal{X}_{K,L,m}$ be arbitrary.   
    
    Then, there exists a unique weak solution of system \eqref{conCH*} on $[\tau,\infty)$ in the sense of Definition~\ref{DEF:WS},
    which has the additional regularities
    \begin{align*}
        (\phi,\psi) &\in H^1(\tau,\infty;(\mathcal{H}^1_{L,\beta})^\prime)\cap BC\big([\tau,\infty);\mathcal{H}^s\big)
        \cap L^2_\mathrm{uloc}(\tau,\infty;\mathcal{W}^{2,6}),
        \\
        (\mu,\theta) &\in L^2_{\mathrm{uloc}}(\tau,\infty;\mathcal{H}^1_{L,\beta}\big), \\
        \big(F'(\phi),G'(\psi)\big) &\in L^2_{\mathrm{uloc}}(\tau,\infty;\LL^6)
    \end{align*}
    for all $s < 1$ if $L=0$ and $s=1$ if $L\in (0,\infty]$. 
    Moreover, the equations
    \begin{subequations}
    \label{EQ:MUTH:STRG}
    \begin{alignat}{2}
        \mu &= -\Lap \phi + F'(\phi) 
        &&\qquad\text{a.e.~in $Q_\tau$},\\
        \theta &= -\Lapg \psi + G'(\psi) + \alpha\deln\phi 
        &&\qquad\text{a.e.~on $\Sigma_\tau$}, \\
        &\begin{cases}
            K\deln\phi = \alpha\psi - \phi &\text{if~}K\in[0,\infty), \\
            \deln\phi = 0 &\text{if~} K = \infty
        \end{cases}
        &&\qquad\text{a.e.~on $\Sigma_\tau$}
    \end{alignat}
    \end{subequations}
    are fulfilled in the strong sense. 
    In particular, there exists a constant $C_*>0$ depending only on the system parameters, $E_\mathrm{free}(\phi_\tau,\psi_\tau)$ and $\norm{(\bv,\bw)}_{L^2_\mathrm{uloc}(\R;\mathbf{L}^2(\Omega)\times \mathbf{L}^2(\Gamma))}$ such that
    \begin{align*}
        &\norm{(\phi,\psi)}_{H^1(\tau,\infty;(\mathcal H^1_{L,\beta})')}
            + \norm{(\phi,\psi)}_{L^\infty(\tau,\infty;\mathcal H^1)}
            + \norm{(\phi,\psi)}_{L^2_\mathrm{uloc}(\tau,\infty;\mathcal W^{2,6})}
        \\
        &\quad + \norm{(F'(\phi),G'(\psi))}_{L^2_\mathrm{uloc}(\tau,\infty;\mathcal L^6)}
            + \norm{(\mu,\theta)}_{L^2_\mathrm{uloc}(\tau,\infty;\mathcal H^1_{L,\beta})}
        \leq C_*.
    \end{align*}
    
    Furthermore, let $(\phi_\tau^1,\psi_\tau^1)$ and $(\phi_\tau^2,\psi_\tau^2)$ be two pairs of admissible initial data satisfying \ref{D1} as well as
    \begin{align}\label{initial-data-mean-value}
        \begin{cases}
            \mean{\phi_\tau^1}{\psi_\tau^1} = \mean{\phi_\tau^2}{\psi_\tau^2} \quad&\text{if~} L\in[0,\infty), \\
            \meano{\phi_\tau^1} = \meano{\phi_\tau^2} \quad\text{and}\quad \meang{\psi_\tau^1} = \meang{\psi_\tau^2} &\text{if~} L=\infty,
        \end{cases}
    \end{align}
    and let 
    \begin{align*}
        (\bv_1,\bw_1) 
        \in L^2_{\mathrm{uloc}}(\tau,\infty;\mathbf{L}^2_\Div(\Om)\times\mathbf{L}^2_\mathrm{tan}(\Ga))
        \quad\text{and}\quad
        (\bv_2,\bw_2) 
        \in L^2_{\mathrm{uloc}}(\tau,\infty;\mathbf{L}^3_\Div(\Om)\times\mathbf{L}^{2+\omega}_\tau(\Ga))
    \end{align*}
    for some $\omega > 0$ be prescribed velocity fields, and let $(\phi_{1},\psi_{1},\mu_{1},\theta_{1})$, $(\phi_{2},\psi_{2},\mu_{2},\theta_{2})$ denote the weak solutions corresponding to the initial data $(\phi_{\tau}^{1},\psi_{\tau}^{1})$, $(\phi_{\tau}^{2},\psi_{\tau}^{2}) $ and the velocity fields $(\bv_{1},\bw_{1})$, $(\bv_{2},\bw_{2})$, respectively. Then, for any $T > 0$, there exists a constant $C > 0$ such that
    \begin{align}\label{est:cont:dep}
        \begin{split}
        	&\bignorm{\big(\phi_{1}(t)-\phi_{2}(t),\psi_{1}(t)-\psi_{2}(t)\big)}^{2}_{L,\beta,*}\\
        	&\quad\leq \bignorm{\big(\phi^{1}_{\tau}-\phi^{2}_{\tau},\psi^{1}_{\tau}-\psi^{2}_{\tau}\big)}^{2}_{L,\beta,*}\exp\Big(C \, \int_{\tau}^{t}\mathcal{F}(r)\dr\Big)\\
        	&\qquad+\int_{\tau}^{t}\bignorm{\big(\bv_{1}(s)-\bv_{2}(s),\bw_{1}(s)-\bw_{2}(s)\big)}_{\mathcal{L}^2}^{2}\exp\Big(C \, \int_{s}^{t}\mathcal{F}(r)\dr \Big)\ds
     \end{split}
    \end{align}
    for all $t \in [\tau,T]$, where $\mathcal{F}\coloneqq 1 + \norm{(\bv_{2},\bw_{2})}^{2}_{\mathbf{L}^3(\Om)\times\mathbf{L}^{2+\omega}(\Ga)}$. Here, the constant $C$ depends only on the system parameters, the final time $T$, and the initial energies $E_{\mathrm{free}}(\phi_\tau^1,\psi_\tau^1)$ and $E_{\mathrm{free}}(\phi_\tau^2,\psi_\tau^2)$.
\end{proposition}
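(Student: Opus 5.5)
We would reduce the statement to the finite-horizon well-posedness result of \cite{Knopf2025} and then use the a priori bounds of Lemma~\ref{LEM:UNIFORMBOUNDEDNESS}, which involve only the energy and the velocity fields, to make every estimate uniform in the time window.

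\textbf{Existence on $[\tau,\infty)$ and uniqueness.} For every $T>\tau$, \cite{Knopf2025} provides a unique weak solution on $[\tau,T]$. Uniqueness on each finite interval implies that the solutions for different $T$ agree on their common interval. Hence they define a single quadruplet $(\phi,\psi,\mu,\theta)$ on $[\tau,\infty)$. The properties (i)--(v) of Definition~\ref{DEF:WS} are local in time, so this quadruplet is a weak solution on $[\tau,\infty)$. Any other weak solution on $[\tau,\infty)$ restricts to a weak solution on each $[\tau,T]$, so it coincides with ours.

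\textbf{Uniform-in-time bounds.} We apply the estimates \eqref{s0.1}--\eqref{s0.3} on windows $[t,t+1]$. Note that the right-hand sides of \eqref{s0.1}--\eqref{s0.3} grow with $t$ under the $L^2_\mathrm{uloc}$ assumption. For this reason the global-in-time claim $(\phi,\psi)\in H^1(\tau,\infty;(\mathcal{H}^1_{L,\beta})')$ seems to need $(\bv,\bw)\in L^2(\R;\ldots)$ as in \ref{D2}. We would read the statement that way, or prove the $H^1_\mathrm{uloc}$ version.

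The energy bound is obtained as follows. By \ref{S1} and \ref{S3}, the quadratic part of $F_2$ and $G_2$ is controlled by $|\phi|,|\psi|\le1$, so $E_\mathrm{free}$ bounds $\|(\phi,\psi)\|_{\mathcal{H}^1}^2$ up to a constant. This gives the $L^\infty(\tau,\infty;\mathcal{H}^1)$ bound, with $C_*$ depending on $E_\mathrm{free}(\phi_\tau,\psi_\tau)$ and $\|(\bv,\bw)\|_{L^2}$.

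The chemical potentials are controlled as follows.
\begin{enumerate}
\item Inequality \eqref{s0.2} gives the gradient part $\|(\mu,\theta)\|_{L,\beta}$ in $L^2$.
\item To control the means of $\mu$ and $\theta$, test \eqref{WF:2} with $(\phi-\bar\phi,\psi-\bar\psi)$, using the strict separation of the mass from $\pm1$ given by \ref{D1}.
\item Invoke the standard inequality $\|F_1'(\phi)\|_{L^1}\le C(F_1'(\phi),\phi-m)+C$, together with the domination assumption \ref{S2} on the boundary.
\item Combine these with the Poincar\'e inequality of Lemma~\ref{lemma:poinc}.
\end{enumerate}
This yields $(\mu,\theta)\in L^2_\mathrm{uloc}(\mathcal{H}^1)$, with constants independent of the window.

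\textbf{Elliptic regularity.} The strong equations \eqref{EQ:MUTH:STRG} and the $\mathcal{W}^{2,6}$ and $\LL^6$ bounds come from the bulk-surface elliptic regularity theory for singular potentials used in \cite{Knopf2025,Giorgini2025}. One tests with $|F_1'(\phi)|^{p-2}F_1'(\phi)$ for $p=6$, using $\mu\in H^1\subset L^6$ and \ref{S2}, and then applies $\mathcal{H}^2$ and $\mathcal{W}^{2,6}$ regularity. These constants depend only on the local-in-time norms, so they are uniform over windows.

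\textbf{Continuity.} Continuity in $\mathcal{H}^s$ follows by interpolating between $C((\mathcal{H}^1_{L,\beta})')$ and $L^\infty(\mathcal{H}^1)$. For $L>0$ this reaches $s=1$ via the weak continuity and the energy inequality. For $L=0$ the dual space is weaker, so we only obtain $s<1$.

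\textbf{Continuous dependence.} For \eqref{est:cont:dep}, we would quote \cite{Knopf2025} or redo its argument on $[\tau,T]$. Subtract the two weak formulations and test the difference equation with $\mathcal{S}_{L,\beta}$ of the difference. The mean condition \eqref{initial-data-mean-value} guarantees that the difference lies in $\mathcal{V}^{-1}_{L,\beta}$.
\begin{itemize}
\item Monotonicity of $F_1'$ and $G_1'$ and the Lipschitz property of $F_2'$ and $G_2'$ give a coercive term.
\item The convective difference is split as $(\phi_1-\phi_2)\bv_2+\phi_1(\bv_1-\bv_2)$.
\item The first term is handled with the $L^3$ and $L^{2+\omega}$ integrability of $(\bv_2,\bw_2)$ and interpolation between $\mathcal{L}^2$, $\mathcal{H}^1$ and the dual norm.
\item The second term is handled with $|\phi_1|\le1$ and Young's inequality.
\end{itemize}
Gronwall's inequality then gives the estimate.

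\textbf{Main obstacle.} We expect the main difficulty to be the uniform $L^2_\mathrm{uloc}$ control of the means of $\mu$ and $\theta$ when $L=\infty$ or $K=0$. There, the coupled trace conditions complicate the test-function argument, and \ref{S2} has to be used carefully.
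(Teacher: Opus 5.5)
Your route is the paper's. Its proof only quotes the finite-horizon result \cite[Theorems~3.2 and 3.5]{Giorgini2025} (see also \cite{Knopf2025}), obtains $C([\tau,T);\mathcal{H}^s)$ from Aubin--Lions--Simon, and extends to $[\tau,\infty)$ by uniqueness. You are right that gluing alone only yields $T$-dependent constants, and your caveat about $H^1(\tau,\infty;(\mathcal{H}^1_{L,\beta})')$ is justified. It can be sharpened: pairing with $(\beta,1)\in\mathcal{H}^1_{L,\beta}$ (with $(1,0)$ and $(0,1)$ if $L=\infty$) and using mass conservation shows that $\norm{(\phi(t),\psi(t))}_{(\mathcal{H}^1_{L,\beta})'}$ is bounded below by a positive constant whenever $m\neq 0$. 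So the zeroth-order part of that norm is infinite. The only meaningful global claim is $(\delt\phi,\delt\psi)\in L^2(\tau,\infty;(\mathcal{H}^1_{L,\beta})')$, and this does need something like \ref{D2}; one cannot expect it for, e.g., time-periodic velocity fields.

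\textbf{The gap.} You also rely on \ref{D2} where it is not needed, and without it your argument does not close. Your $L^\infty(\tau,\infty;\mathcal{H}^1)$ bound comes from \eqref{s0.1}, i.e.\ from $\norm{(\bv,\bw)}_{L^2(\tau,\infty;\mathcal{L}^2)}$. Under the stated $L^2_{\mathrm{uloc}}$ hypothesis the right-hand side of \eqref{s0.1} grows like $t$ in general. Hence none of your window constants is uniform, and your fallback (the $H^1_{\mathrm{uloc}}$ version under the original hypothesis) does not go through as written. This affects three things:
\begin{enumerate}
\item property (i) of Definition~\ref{DEF:WS}, which contains $L^\infty(\tau,\infty;\mathcal{H}^1_{K,\alpha})$ and so is not local in time, contrary to what you say;
\item the $BC$ claim;
\item the requirement that $C_*$ depend only on the $L^2_{\mathrm{uloc}}$ norm.
\end{enumerate}
These parts are true without \ref{D2}; what is missing is a dissipative estimate. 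Test \eqref{WF:2} with the same mean-free test function you need in your step~2. Using convexity of $F_1,G_1$, \ref{D1}, $\abs{\phi},\abs{\psi}\le 1$ and a Poincar\'e inequality for $(\mu,\theta)$ modulo constants, this gives $\norm{(\phi,\psi)}_{K,\alpha}^2\le C+C\norm{(\mu,\theta)}_{L,\beta}$. Since $F$ and $G$ are bounded on $[-1,1]$, it follows that $E_{\mathrm{free}}(\phi,\psi)\le C+\frac12\norm{(\mu,\theta)}_{L,\beta}^2$. Combine this with $\ddt E_{\mathrm{free}}(\phi,\psi)+\frac12\norm{(\mu,\theta)}_{L,\beta}^2\le \frac12\norm{(\bv,\bw)}_{\mathcal{L}^2}^2$ (cf.~\eqref{s2}). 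You obtain $\ddt E_{\mathrm{free}}+E_{\mathrm{free}}\le C+\frac12\norm{(\bv,\bw)}_{\mathcal{L}^2}^2$, and therefore $\sup_{t\ge\tau}E_{\mathrm{free}}(\phi(t),\psi(t))\le \abs{E_{\mathrm{free}}(\phi_\tau,\psi_\tau)}+C\big(1+\norm{(\bv,\bw)}_{L^2_{\mathrm{uloc}}(\R;\mathcal{L}^2)}^2\big)$.

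The dissipative estimate in the proof of Proposition~\ref{Prop:PBAbsorbing} is a variant of this. There too, $\abs{\phi},\abs{\psi}\le 1$ lets one bound the convective terms by $\norm{(\bv,\bw)}_{\mathcal{L}^2}$ alone, so \ref{D4} is not needed for this purpose. With this uniform energy bound, your window arguments go through with constants depending only on $E_{\mathrm{free}}(\phi_\tau,\psi_\tau)$ and the uloc norm. This covers $(\mu,\theta)$, the $\mathcal{W}^{2,6}$ and $\mathcal{L}^6$ bounds, and $(\delt\phi,\delt\psi)\in L^2_{\mathrm{uloc}}$. Note that the paper's gluing argument does not supply this step either.
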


\begin{proof}
    For any $T\in (\tau,\infty)$, the analogous statement was established in \cite[Theorems~3.2 and 3.5]{Giorgini2025} (see also \cite{Knopf2025}). Note that the regularity $(\phi,\psi) \in C\big([\tau,T);\mathcal{H}^s\big)$ directly follows from the Aubin--Lions--Simon lemma. As this holds for arbitrary $T\in (\tau,\infty)$ and the solution is unique on $[\tau,T)$, it can be extended onto the interval $[\tau,\infty)$ and possesses the desired regularities.
\end{proof}

\subsection{Instantaneous regularization}

\begin{theorem}
    \label{thm:highreg}
    Suppose that \ref{ASSUMP:1}--\ref{ASSUMP:2}, \ref{S1}--\ref{S3}, \ref{D1}--\ref{D3} are fulfilled, let  $(\phi_\tau,\psi_\tau) \in \mathcal{X}_{K,L,m}$ be arbitrary, and let $(\phi,\psi,\mu,\theta)$ be the unique weak solution to \eqref{conCH*} in the sense of Definition~\ref{DEF:WS}. Then, for any $\varsigma > \tau$, it holds that
    \begin{align*}
        (\delt\phi,\delt\psi)&\in L^\infty(\varsigma,\infty;(\mathcal{H}^1_{L,\beta})^\prime)\cap L^2_{\mathrm{uloc}}(\varsigma,\infty;\mathcal{H}^1), \\
        (\phi,\psi)&\in L^\infty(\varsigma,\infty;\mathcal{W}^{2,6}), \\
        \mu,\theta)&\in L^\infty(\varsigma,\infty;\mathcal{H}^1_{L,\beta})\cap L^2_{\mathrm{uloc}}(\varsigma,\infty;\mathcal{H}^2), \\
        (F^\prime(\phi),G^\prime(\psi))&\in L^\infty(\varsigma,\infty;\mathcal{L}^6).
    \end{align*}
\end{theorem}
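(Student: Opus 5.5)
The plan is the standard parabolic-smoothing argument: derive a differential inequality for the time derivative in the dual norm, multiply by a time weight that vanishes at $\tau$ (or use the uniform Gronwall lemma), and then recover the spatial regularity from elliptic theory at a.e.\ time. Throughout I take Proposition~\ref{PROP:WS} as given; in particular the bounds on $L^2_\mathrm{uloc}$ norms of $(\mu,\theta)$ in $\mathcal{H}^1$ and of $(\delt\phi,\delt\psi)$ in $(\mathcal{H}^1_{L,\beta})'$, from \eqref{s0.2}--\eqref{s0.3}, are available.

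\textbf{Step 1 (key estimate).} Work at the level of a Galerkin or regularized approximation where $\delt$ can be applied to the equations. Differentiate \eqref{WF:1} in time and test with $\mathcal{S}_{L,\beta}(\delt\phi,\delt\psi)$; this is admissible because mass conservation gives $(\delt\phi,\delt\psi)\in\mathcal{V}^{-1}_{L,\beta}$. By the definition of $\mathcal{S}_{L,\beta}$ this produces
\begin{align*}
\tfrac12\ddt\norm{(\delt\phi,\delt\psi)}_{L,\beta,\ast}^2+\intO\delt\mu\,\delt\phi\dx+\intG\delt\theta\,\delt\psi\dG=(\text{convection terms}).
\end{align*}
Differentiating \eqref{WF:2} in time and using the convexity bounds $F_1''\ge\Theta$, $G_1''\ge\Theta$ together with the Lipschitz continuity of $F_2'$ and $G_2'$, the $\delt\mu$, $\delt\theta$ terms are bounded below by $\norm{(\delt\phi,\delt\psi)}_{K,\alpha}^2-C\norm{(\delt\phi,\delt\psi)}_{\mathcal L^2}^2$. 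The lower-order term is handled by interpolation between $\norm{\cdot}_{L,\beta,\ast}$ and $\mathcal{H}^1$, which is available since the two mass constraints are compatible.

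\textbf{Step 2 (convection terms, main obstacle).} The right-hand side contains $\intO\delt(\phi\bv)\cdot\Grad\mathcal{S}^\Om\dx$ and the corresponding surface term. The piece with $\delt\phi\,\bv$ is estimated using $\bv\in L^2_\mathrm{uloc}(\mathbf L^3)$, $\Grad\mathcal S^\Om\in \mathbf L^2$, and $\delt\phi\in L^6$ in case \ref{D3:1}; it is then absorbed by Young's inequality. The piece with $\phi\,\delt\bv$ uses $|\phi|\le1$ and $\delt\bv\in L^2_\mathrm{uloc}(\mathbf L^{6/5})$, paired with $\Grad\mathcal S^\Om$ in $L^6$ via $\mathcal{S}\in \mathcal{H}^2$ elliptic regularity (here is where the $W^{1,6}$ bound of $\Grad\mathcal S$ in terms of $\norm{\cdot}_{L,\beta,*}$ must be checked carefully). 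The surface analogues use $\mathbf{L}^{1+\omega}$ and $\mathbf{L}^{2+\omega}$ with two-dimensional embeddings. In case \ref{D3:2}, where no time derivative of the velocity is available, I would instead test \eqref{WF:1} directly with $(\delt\phi,\delt\psi)$-type quantities, i.e.\ run an $\mathcal{H}^1$-energy estimate on $(\mu,\theta)$ by differentiating the energy. The convective terms are then integrated by parts using $\Div\,\bv=0$, $\bv\in L^2_\mathrm{uloc}(\mathbf H^1)$ and $\bv|_\Ga=\bw$ when $K=0$. This case is the more delicate one and I expect it to be the main difficulty. Either way the result is
\begin{align*}
y'\le g\,y+h,\qquad y=\norm{(\delt\phi,\delt\psi)}_{L,\beta,\ast}^2,
\end{align*}
with $g,h\in L^1_\mathrm{uloc}$.

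\textbf{Step 3 (smoothing).} Since $\int_t^{t+1}y\ds$ is uniformly bounded by \eqref{s0.3}, Lemma~\ref{Lemma:Gronwall} (the uniform Gronwall lemma) with $r=\varsigma-\tau$ gives $y\in L^\infty(\varsigma,\infty)$. Integrating the inequality then yields $(\delt\phi,\delt\psi)\in L^2_\mathrm{uloc}(\varsigma,\infty;\mathcal H^1)$. These bounds pass to the limit in the approximation by lower semicontinuity.

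\textbf{Step 4 (spatial regularity).} Using \eqref{WF:1} as an elliptic problem for $(\mu,\theta)$ gives $(\mu,\theta)\in L^\infty(\varsigma,\infty;\mathcal H^1_{L,\beta})$; the mean of $(\mu,\theta)$ is controlled by the usual argument based on \ref{D1} and the singular potential. Elliptic bulk-surface regularity then gives $(\mu,\theta)\in L^2_\mathrm{uloc}(\mathcal H^2)$, since the right-hand side $\delt\phi+\Div(\phi\bv)$ lies in $L^2_\mathrm{uloc}(\mathcal L^2)$ (using Step 3 together with $\Grad\phi\cdot\bv$). Finally, the pointwise-in-time argument of \cite{Giorgini2025}, which combines the domination property \ref{S2} with testing by $|F'(\phi)|^4F'(\phi)$, converts $\mu,\theta\in L^6$ into $F'(\phi),G'(\psi)\in L^6$ and $(\phi,\psi)\in\mathcal W^{2,6}$, uniformly for $t\ge\varsigma$.
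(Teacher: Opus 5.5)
Your overall architecture matches the paper's proof. In case \ref{D3:1} the paper also tests the time-differentiated form of \eqref{WF:1} with $\mathcal{S}_{L,\beta}$ applied to the time derivative. It then uses monotonicity of $F_1',G_1'$ together with Ehrling/Poincar\'e for the lower-order term, and closes with the uniform Gronwall lemma. In case \ref{D3:2} it runs an $\mathcal{H}^1$-estimate for $(\mu,\theta)$, see \eqref{EST:GW2:1}. The spatial regularity is then recovered as in \cite{Knopf2025,Giorgini2025}.

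The one step that does not work as written is the term $\intO \phi\,\delt\bv\cdot\Grad\mathcal{S}_{L,\beta}^\Om\dx$, for two reasons.
\begin{itemize}
\item The bound you flag does not exist. $\norm{\Grad\mathcal{S}_{L,\beta}^\Om(\delt\phi,\delt\psi)}_{L^6(\Om)}$ cannot be controlled by $\norm{(\delt\phi,\delt\psi)}_{L,\beta,\ast}$, since the dual norm only controls $\mathcal{S}_{L,\beta}$ in $\mathcal{H}^1$. What $\mathcal{H}^2$-regularity of $\mathcal{S}_{L,\beta}$ gives is a bound by $\norm{(\delt\phi,\delt\psi)}_{\LL^2}$. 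You must then absorb this via \eqref{es01} (or its analogue for $K=\infty$). The term thus becomes forcing $h=C\norm{(\delt\bv,\delt\bw)}^2_{\mathbf{L}^{6/5}(\Om)\times\mathbf{L}^{1+\omega}(\Ga)}$ plus $Cy$, which is fine.
\item More seriously, you use $\abs{\phi}\le 1$ inside a ``Galerkin or regularized approximation''. A Galerkin scheme cannot be run with the singular potential itself. Once $F_1,G_1$ are replaced by Moreau--Yosida approximations, the approximate phase fields are no longer bounded by $1$. So the pointwise bound your estimate relies on is not available at the level where you perform the computation, and the estimate would have to be redone with other uniform bounds.
\end{itemize}

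There are two clean repairs, both used in the paper.
\begin{itemize}
\item Avoid approximation in case \ref{D3:1}, as the paper does. Apply forward difference quotients $\delth$ to \eqref{WF:1} for the actual weak solution and test with $\mathcal{S}_{L,\beta}(\delth\phi,\delth\psi)$. This is rigorous because $\delth\phi\in L^\infty(\tau,\infty;H^1(\Om))$ with $\delt\delth\phi$ in the dual space. Moreover, monotonicity gives $\intO \delth[F_1'(\phi)]\,\delth\phi\dx\ge 0$ without ever differentiating $F_1'$. In this setting $\abs{\phi}<1$ holds, so your $\mathcal{H}^2$-argument (with the correction above) goes through; let $h\to 0$ at the end.
\item Alternatively, integrate by parts as in \eqref{es03}, using $\Div\,\bv=0$, $\bv\cdot\n=0$ and $\Divg\,\bw=0$. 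This gives $\intO \phi\,\delth\bv\cdot\Grad\mathcal{S}_{L,\beta}^\Om\dx=-\intO (\delth\bv\cdot\Grad\phi)\,\mathcal{S}_{L,\beta}^\Om\dx$. You then only need $\mathcal{S}_{L,\beta}\in\mathcal{H}^1\emb\LL^6$, controlled by the dual norm. The factor $\norm{(\phi,\psi)}^2_{\mathcal{W}^{1,\infty}}\in L^1_{\mathrm{uloc}}$, which follows from the $L^2_{\mathrm{uloc}}(\mathcal{W}^{2,6})$ bound in Proposition~\ref{PROP:WS}, goes into $g$. This route does not use $\abs{\phi}\le1$ at all.
\end{itemize}

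A minor point on case \ref{D3:2}: the natural Gronwall quantity there is $\norm{(\mu,\theta)}_{L,\beta}^2$, whose local integrals are controlled by \eqref{s0.2}. It is not $\norm{(\delt\phi,\delt\psi)}_{L,\beta,\ast}^2$, although the two are equivalent up to terms bounded via $(\bv,\bw)\in L^\infty(\LL^2)$. The paper justifies that computation by Moreau--Yosida regularization of the potentials together with mollification of the velocities. It then passes to the limit $\lambda\to 0$ using uniqueness.
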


\begin{remark}
    Regarding the regularities established in Theorem~\ref{thm:highreg}, we note the following (see also \cite[Remark 3.8]{Knopf2025}):
    \begin{enumerate}[label=\textnormal{(\alph*)},topsep=0ex, leftmargin=*]
        \item We even get
        \begin{align*}
            G^\prime(\psi)\in L^\infty(\varsigma,\infty;L^s(\Ga)) \qquad\text{~and~}\qquad \psi\in L^\infty(\varsigma,\infty;W^{2,s}(\Ga))
        \end{align*}
        for any $\varsigma > \tau$ and $s\in[1,\infty)$.
        \item If the spatial dimension is two (i.e., $d=2$), we also obtain
        \begin{alignat*}{2}
            (F^\prime(\phi), G^\prime(\psi))\in L^\infty(\varsigma,\infty;\mathcal{L}^r), \qquad (\phi,\psi)\in L^\infty(\varsigma,\infty;\mathcal{W}^{2,r})
        \end{alignat*}
        for any $\varsigma > \tau$ and $r\in[1,\infty)$.
    \end{enumerate}
\end{remark}

\begin{proof}[Proof of Theorem~\ref{thm:highreg}]
    In this proof, we deal with the cases corresponding to \ref{D3:1} or \ref{D3:2} separately since both cases require different approximation techniques.
    Let $\varsigma>\tau$ be arbitrary. In the following, let $C$ denote a generic positive constant depending only on the prescribed velocity fields, the quantities specified in \ref{ASSUMP:1} and \ref{ASSUMP:2}, $F$, $G$, $\tau$, $\varsigma$, and $\norm{(\phi_\tau,\psi_\tau)}_{\HH^1}$, which may change its value from line to line.

    \textbf{Case 1:} We assume that the velocity fields $(\bv,\bw)$ satisfy \ref{D3:1}. \\
    For any $t\ge \tau$ and $h>0$, we write
    \begin{align*}
    	\partial_{t}^{h}f(t)= \frac 1h \big(f(t+h)-f(t)\big)
    \end{align*}
    to denote the forward-in-time difference quotient of a function $f$ at time $t$.	

    Now, let $h\in (0,1)$ be arbitrary.
	From \eqref{WF:1}, we deduce that
	\begin{align}
		\big\langle(\partial_{t}^{h}\delt \phi,\partial_{t}^{h}\delt \psi),(\zeta,\xi) \big\rangle_{\mathcal{H}^1_{L,\beta}}
		&= \big(\partial_{t }^{h}\phi\,\bv(\cdot+h),\Grad\zeta\big)_{L^{2}(\Om)}+\big(\partial_{t }^{h}\bv\,\phi,\Grad\zeta\big)_{L^{2}(\Om)}\nonumber\\
		 &\quad +\big(\partial_{t }^{h}\psi\,\bw(\cdot+h),\Gradg\xi\big)_{L^{2}(\Ga)}+\big(\partial_{t }^{h}\bw\,\psi,\Gradg\xi\big)_{L^{2}(\Ga)}\nonumber\\
		 &\quad -\big(\big(\partial_{t }^{h}\mu,\partial_{t }^{h}\theta\big),(\zeta,\xi) \big)_{L,\beta} \label{sumweak2} 
	\end{align} 
	for every $(\zeta,\xi)\in \mathcal{H}^1_{L,\beta}$, where 
    \begin{subequations}
    	\begin{alignat}{3}
    		&\partial_{t }^{h}\mu 
    		=-\Delta\partial_{t }^{h}\phi+\partial_{t }^{h}F^{\prime}(\phi)\label{mu} 
    		&&\quad\text{a.e.~in~}Q_\tau, \\
    		&\partial_{t }^{h}\theta
    		=-\Delta_{\Gamma}\partial_{t}^{h}\psi+\partial_{t }^{h}G^{\prime}(\psi)
    		+\alpha\partial_{n }\partial_{t }^{h}\phi \label{teta}
    		&&\quad\text{a.e.~on~}\Sigma_\tau,\\
    		&\begin{cases}
    			K \deln\partial_{t }^{h} \phi = \alpha \partial_{t }^{h}\psi - \partial_{t }^{h}\phi &\text{if~}\; K\in [0,\infty),\\
    			\deln \partial_{t }^{h}\phi = 0 &\text{if~}\; K=\infty
    		\end{cases}\label{muteta}
    		&&\quad\text{a.e.~on~}\Sigma_\tau,
    	\end{alignat}
    \end{subequations}
	due to \eqref{EQ:MUTH:STRG}. 
	Now, testing \eqref{sumweak2} with the solution operator $\mathcal{S}_{L,\beta}\big(\partial_{t }^{h}\phi,\partial_{t }^{h}\psi\big)$ and exploiting the fact that
	\begin{align*}
	   \big( \big(\partial_{t }^{h}\mu,\partial_{t }^{h}\theta\big),\mathcal{S}_{L,\beta}\big(\partial_{t }^{h}\phi,\partial_{t }^{h}\psi\big)\big)_{L,\beta}=-\big( \big(\partial_{t }^{h}\mu,\partial_{t }^{h}\theta\big),\big(\partial_{t }^{h}\phi,\partial_{t }^{h}\psi\big)\big)_{\mathcal{L}^{2}}
	\end{align*}
	and
	\begin{align*}
        \mathcal{S}_{L,\beta}(\partial_{t }^{h}\phi,\partial_{t }^{h}\psi) = \bigscp{\mathcal{S}_{L,\beta}^\Om(\partial_{t }^{h}\phi,\partial_{t }^{h}\psi)}{\mathcal{S}_{L,\beta}^\Ga(\partial_{t }^{h}\phi,\partial_{t }^{h}\psi)}\in\mathcal{V}^1_{L,\beta},
	\end{align*} 
    we have
	\begin{align*}
		&\dfrac{1}{2}\ddt\big\| \big(\partial_{t }^{h}\phi,\partial_{t }^{h}\psi \big)\big\|_{L,\beta,*}^{2}
		+\big( \big(\partial_{t }^{h}\mu,\partial_{t }^{h}\theta\big),\big(\partial_{t }^{h}\phi,\partial_{t }^{h}\psi\big)\big)_{\mathcal{L}^{2}}\\
		&\quad=-\big(\partial_{t}^{h}\phi\,\bv(\cdot+h),\Grad\mathcal{S}_{L,\beta}^\Om(\partial_{t }^{h}\phi,\partial_{t }^{h}\psi) \big)_{L^{2}(\Om)}-\big(\partial_{t }^{h}\bv\,\phi,\Grad\mathcal{S}_{L,\beta}^\Om(\partial_{t }^{h}\phi,\partial_{t }^{h}\psi)\big)_{L^{2}(\Om)}\nonumber\\
		&\quad\quad-\big(\partial_{t }^{h}\psi\,\bw(\cdot+h),\Gradg\mathcal{S}_{L,\beta}^\Ga(\partial_{t }^{h}\phi,\partial_{t }^{h}\psi)\big)_{L^{2}(\Ga)}-\big(\partial_{t }^{h}\bw\,\psi,\Gradg\mathcal{S}_{L,\beta}^\Ga(\partial_{t }^{h}\phi,\partial_{t }^{h}\psi)\big)_{L^{2}(\Ga)}\\
		&\quad\eqqcolon\sum_{k=1}^{4}\Pi_{k}.
	\end{align*} 
    Then, from \eqref{mu} and \eqref{teta} it follows that
	\begin{align*}
		&\dfrac{1}{2}\ddt\big\|\big(\partial_{t }^{h}\phi,\partial_{t }^{h}\psi \big) \big\|_{L,\beta,*}^{2} 
		+\big\|(\partial_{t }^{h}\phi,\partial_{t }^{h}\psi) \big\|^{2} _{K,\alpha} +\intO \partial_{t }^{h}[F_{1}^{\prime}(\phi)]\partial_{t }^{h}\phi\dx +\intG \partial_{t }^{h}[G_{1}^{\prime}(\psi)]\partial_{t }^{h}\psi\dG
		\\
		&\quad=-\intO\partial_{t }^{h}[F_{2}^{\prime}(\phi)]\partial_{t }^{h}\phi \dx-\intG\partial_{t }^{h}[G_{2}^{\prime}(\psi)]\partial_{t }^{h}\psi \dG+\sum_{k=1}^{4}\Pi_{k}.
	\end{align*}
	Now, recalling that $ F_{2} $, $ G_{2} $ are Lipschitz continuous and $ F_{1}^\prime $ and $ G_{1}^\prime$ are monotone, we obtain that
	\begin{align}\label{est0}
		\dfrac{1}{2}\ddt\big\|\big(\partial_{t }^{h}\phi,\partial_{t }^{h}\psi \big) \big\|_{L,\beta,*}^{2} 
		+\big\|(\partial_{t }^{h}\phi,\partial_{t }^{h}\psi) \big\|^{2} _{K,\alpha} \leq C\big\|(\partial_{t }^{h}\phi,\partial_{t }^{h}\psi) \big\|^{2} _{\mathcal{L}^{2}}
		 +\sum_{k=1}^{4}\Pi_{k}.
	\end{align} 
    If $K\in[0,\infty)$, we use the bulk-surface Poincar\'{e} inequality combined with Ehrling's Lemma for the first term on the right-hand side of \eqref{est0} to obtain
	\begin{align}\label{es01}
	   \big\|(\partial_{t }^{h}\phi,\partial_{t }^{h}\psi) \big\|^{2} _{\mathcal{L}^{2}}\leq \frac 14 \big\|(\partial_{t }^{h}\phi,\partial_{t }^{h}\psi) \big\|^{2} _{K,\alpha}+ C\norm{(\partial_{t}^{h}\phi,\partial_{t}^{h}\psi)}_{L,\beta,*}^{2}.
	\end{align} 
    If $K = \infty$, we first use Ehrling's Lemma to obtain
    \begin{align*}
        \norm{(\delth\phi,\delth\psi)}_{\mathcal{L}^2}^2 
        &\leq \frac{1}{5}\norm{(\delth\phi,\delth\psi)}_{\mathcal{H}^1}^2 + C\norm{(\delth\phi,\delth\psi)}_{L,\beta,\ast}^2 
        \\
        &\leq \frac{1}{5}\norm{(\delth\phi,\delth\psi)}_{\mathcal{L}^2}^2 
        + \frac{1}{5}\norm{(\delth\Grad\phi,\delth\Gradg\psi)}_{\mathcal{L}^2}^2 + C\norm{(\delth\phi,\delth\psi)}_{L,\beta,\ast}^2.
    \end{align*}
    Consequently, absorbing the corresponding term by the left-hand side, we deduce that
    \begin{align}
        \label{es01*}
        \begin{split}
        \norm{(\delth\phi,\delth\psi)}_{\mathcal{L}^2}^2 
        &\leq \frac{1}{4}\norm{(\delth\Grad\phi,\delth\Gradg\psi)}_{\mathcal{L}^2}^2
        + C\norm{(\delth\phi,\delth\psi)}_{L,\beta,\ast}^2
        \\
        &\le \frac{1}{4}\norm{(\delth\phi,\delth\psi)}_{K,\alpha}^2
        + C\norm{(\delth\phi,\delth\psi)}_{L,\beta,\ast}^2.
        \end{split}
    \end{align}
    Now, we estimate the terms $\Pi_{k}$ on the right-hand side of $\eqref{est0}$. By using Sobolev embedding theorems, Young's inequality, and bulk-surface Poincar\'{e}'s inequality (Lemma~\ref{lemma:poinc}), we infer that
	\begin{align}
        \label{es02}
		\abs{\Pi_{1}}+\abs{\Pi_{3}}&\leq \norm{\nabla\partial_{t}^{h}\phi}_{\mathbf{L}^{2}(\Om)}\norm{\bv(\cdot+h)}_{\mathbf{L}^{3}(\Om)}\norm{\mathcal{S}_{L,\beta}^{\Om}(\partial_{t}^{h}\phi,\partial_{t}^{h}\psi)}_{L^{6}(\Om)}
         \nonumber\\
		&\quad+\norm{\nabla_{\Ga}\partial_{t}^{h}\psi}_{\mathbf{L}^{2}(\Ga)}\norm{\bw(\cdot+h)}_{\mathbf{L}^{2+\omega}(\Ga)}\norm{\mathcal{S}_{L,\beta}^{\Ga}(\partial_{t}^{h}\phi,\partial_{t}^{h}\psi)}_{L^{\frac{4+2\omega}{\omega}}(\Ga)}
        \\ \nonumber
		&\leq\dfrac{1}{4}\big\|(\partial_{t }^{h}\phi,\partial_{t }^{h}\psi) \big\|^{2} _{K,\alpha}+C\norm{(\bv(\cdot + h),\bw(\cdot + h))}_{\mathbf{L}^{3}(\Om)\times \mathbf{L}^{2+\omega}(\Ga)}^{2}\norm{(\partial_{t}^{h}\phi,\partial_{t}^{h}\psi)}_{L,\beta,*}^{2},
        \\[1ex]
        \label{es03}
        \begin{split}
		\abs{\Pi_{2}}+\abs{\Pi_{4}}&\leq \norm{\nabla\phi}_{\mathbf{L}^{\infty}(\Om)}\norm{\partial_{t}^{h}\bv}_{\mathbf{L}^{6/5}(\Om)}\norm{\mathcal{S}_{L,\beta}^{\Om}(\partial_{t}^{h}\phi,\partial_{t}^{h}\psi)}_{L^{6}(\Om)}
        \\
		&\quad+\norm{\nabla_{\Gamma}\psi}_{\mathbf{L}^{\infty}(\Ga)}\norm{\partial_{t}^{h}\bw}_{\mathbf{L}^{1+\omega}(\Ga)}\norm{\mathcal{S}_{L,\beta}^{\Ga}(\partial_{t}^{h}\phi,\partial_{t}^{h}\psi)}_{L^{\frac{1+\omega}{\omega}}(\Ga)}
        \\
		&\leq \norm{(\partial_{t}^{h}\bv,\partial_{t}^{h}\bw)}_{\mathcal{X}^{\omega}}^{2}+C\norm{(\phi,\psi)}_{\mathcal{W}^{1,\infty}}^{2}\norm{(\partial_{t}^{h}\phi,\partial_{t}^{h}\psi)}_{L,\beta,*}^{2}.
        \end{split}
	\end{align}
	Here, $\omega$ is the quantity introduced in \ref{D3:1}, and
    $\mathcal{X}^{\omega}\coloneqq \mathbf{L}^{\frac65}(\Om)\times \mathbf{L}^{1+\omega}(\Ga)$.

	Considering \eqref{es01} and the above estimates in \eqref{est0} and using Young's inequality, we deduce that
	\begin{align}\label{est02}
        \begin{split}
		&\ddt\norm{(\partial_{t }^{h}\phi,\partial_{t }^{h}\psi)}_{L,\beta,*}^{2} + \frac12\norm{(\delth\phi,\delth\psi)}_{K,\alpha}^2
        \\
        &\quad
		\leq2\norm{(\partial_{t}^{h}\bv,\partial_{t}^{h}\bw)}_{\mathcal{X}^{\omega}}^{2}+ \Upsilon\big\|\big(\partial_{t }^{h}\phi,\partial_{t }^{h}\psi \big) \big\|_{L,\beta,*}^{2} \,,
        \end{split}
	\end{align}
    where 
    \begin{equation*}
        \Upsilon\coloneqq C\Big(1+\norm{(\bv(\cdot + h),\bw(\cdot + h))}_{\mathbf{L}^{3}(\Om)\times \mathbf{L}^{2+\omega}(\Ga)}^{2}+\norm{(\phi,\psi)}_{\mathcal{W}^{1,\infty}}^{2}\Big).
    \end{equation*}

    Now, we fix an arbitrary $r\in (0,1]$.
    To apply the Uniform Gronwall Lemma (Lemma~\ref{Lemma:Gronwall}) with $t_0 \coloneqq \tau$, we need to show that there exist positive constants $a_1,a_2,a_3>0$ such that 
    \begin{align}
        \label{EST:GW:A1}
        \int_t^{t + r} \Upsilon(s) \ds &\le a_1,
        \\
        \label{EST:GW:A2}
        \int_t^{t+r} 2\norm{(\delth\bv(s),\delth\bw(s))}_{\mathcal{X}^\omega}^2 \ds 
        &\le a_2,
        \\
        \label{EST:GW:A3}
        \int_t^{t+r} \norm{(\delth\phi(s),\delth\psi(s))}_{L,\beta,\ast}^2\ds &\le a_3
    \end{align}
    for every $t\ge \tau$.

    To this end, let $t\ge \tau$ be arbitrary. 
    Using a standard estimate for difference quotients, which can be found in \cite[p.~317]{Fukao2021} (see also \cite[Lemma~7.23]{GilbargTrudinger2001}), we obtain
    \begin{align*}
        \int_t^{t+r} \norm{(\delth\phi(s),\delth\psi(s))}_{L,\beta,\ast}^2\ds 
        &\leq \int_t^{t+r+h}\norm{(\delt\phi(s),\delt\psi(s))}_{L,\beta,\ast}^2\ds.
    \end{align*} 
    Hence, applying Lemma~\ref{LEM:UNIFORMBOUNDEDNESS} (with $t+2$ instead of $t$ and $t$ instead of $\tau$), and recalling \ref{D2} and $h\in (0,1)$, we find that
    \begin{align*}
        &\int_t^{t+r} \norm{(\delth\phi(s),\delth\psi(s))}_{L,\beta,\ast}^2\ds
        \le \int_\tau^{t+2}\norm{(\delt\phi(s),\delt\psi(s))}_{L,\beta,\ast}^2\ds
        \\
        &\quad 
        \le C E_{\mathrm{free}}(\phi_\tau,\psi_\tau) 
            + C\int_\tau^{t+2} \norm{(\bv(s),\bw(s))}_{\LL^2}^2 \ds
        \\[1ex]
        &\quad
        \le C E_{\mathrm{free}}(\phi_\tau,\psi_\tau) 
            + C\, \norm{(\bv,\bw)}_{L^2(\R;\LL^2)}^2 
        \eqqcolon a_3. 
    \end{align*}
    This verifies \eqref{EST:GW:A3}.
    Proceeding in a similar manner, we obtain
    \begin{align*}
        &\int_t^{t+r} 2\norm{(\delth\bv(s),\delth\bw(s))}_{\mathcal{X}^\omega}^2 \ds 
        \leq C \int_t^{t+r+h}\norm{(\delt\bv(s),\delt\bw(s))}_{\mathcal{X}^\omega}^2 \ds 
        \\[1ex]
        &\quad
        \le C\, \norm{(\delt\bv,\delt\bw)}_{L^2_\mathrm{uloc}(\R;\mathcal{X}^\omega)}^2 \eqqcolon a_2,
    \end{align*}
    which verifies \eqref{EST:GW:A2}.
    Moreover, recalling $r\le 1$, \ref{D3:1} and $(\phi,\psi)\in L^2_{\mathrm{uloc}}(\tau,\infty;\mathcal{W}^{2,6})\emb L^2_{\mathrm{uloc}}(\tau,\infty;\mathcal{W}^{1,\infty})$, we deduce that
    \begin{align*}
        \int_t^{t + r} C\Upsilon(s) \ds 
        &\le Cr 
            + C\int_{t+h}^{t+h+1} \norm{(\bv(s),\bw(s))}_{\mathbf{L}^{3}(\Om)\times \mathbf{L}^{2+\omega}(\Ga)}^{2} \ds
        \\
        &\qquad
            + C\int_t^{t+1} \norm{(\phi(s),\psi(s))}_{\mathcal{W}^{1,\infty}}^{2} \ds
        \\[1ex]
        &\le C\norm{(\bv,\bw)}_{L^2_\mathrm{uloc}(\R;\mathbf{L}^3(\Omega)\times \mathbf{L}^{2+\omega}(\Gamma))}^2 
            + C\norm{(\phi,\psi)}_{L^2_\mathrm{uloc}(\R;\WW^{2,6})}^2
        \eqqcolon a_1.
    \end{align*}
    This means that \eqref{EST:GW:A1} is fulfilled.

    Now, using the Uniform Gronwall Lemma (Lemma~\ref{Lemma:Gronwall}) with $t_0 \coloneqq \tau$, we deduce that
    \begin{align*}
        \norm{(\partial_{t }^{h}\phi(t+r),\partial_{t }^{h}\psi(t+r))}_{L,\beta,*}^{2}
        \leq \left(\frac{a_3}{r} + a_2\right)\mathrm{e}^{a_1} \qquad\text{for all~}t\geq \tau.
    \end{align*}
    Consequently, for any $s\ge \varsigma$, choosing $r\coloneqq \min\{1,\varsigma-\tau\}$ and $t=s-r$ yields
    \begin{align}
        \label{EST:GW:4}
        \norm{(\partial_{t }^{h}\phi(s),\partial_{t }^{h}\psi(s))}_{L,\beta,*}^{2}
        \leq \left(\frac{a_3}{\min\{1,\varsigma-\tau\}} + a_2\right)\mathrm{e}^{a_1} 
        \le C.
    \end{align}
    Integrating \eqref{est02} with respect to time from an arbitrary $t\ge \varsigma$ to $t+1$ , and using the estimates \eqref{EST:GW:A1}, \eqref{EST:GW:A2} and \eqref{EST:GW:4}, we get 
    \begin{align*}
        \int_t^{t+1}\norm{(\delth\phi(s),\delth\psi(s))}_{K,\alpha}^2\ds 
        \le C.
    \end{align*}
    Consequently, we have
    \begin{align*}
        \norm{(\delth\phi,\delth\psi)}_{L^\infty(\varsigma,\infty;(\mathcal{H}^1_{L,\beta})^\prime)} + \norm{(\delth\phi,\delth\psi)}_{L^2_{\mathrm{uloc}}(\varsigma,\infty;\mathcal{H}^1)} 
        \leq C.
    \end{align*}
    Sending $h\rightarrow 0$, we finally conclude that
    \begin{align*}
        (\delt\phi,\delt\psi)\in L^\infty(\varsigma,\infty;(\mathcal{H}^1_{L,\beta})^\prime)\cap L^2_{\mathrm{uloc}}(\varsigma,\infty;\mathcal{H}^1).
    \end{align*}
    The remaining regularities can be established by proceeding as in \cite{Knopf2025} (see also \cite{Giorgini2025}).
    
    \textbf{Case 2:} Now, we assume that the velocity fields $(\bv,\bw)$ satisfy \ref{D3:2}. \\
    We introduce families of regular potentials $(F_\lambda)_{\lambda>0}$ and $(G_\lambda)_{\lambda > 0}$ to approximate the singular potentials $F$ and $G$, respectively. To be precise, for $\lambda > 0$, we define
    \begin{align*}
        F_\lambda(s) = F_{1,\lambda}(s) + F_2(s), \qquad G_\lambda(s) = G_{1,\lambda}(s) + G_2(s) \qquad\text{for all~}s\in\R,
    \end{align*}
    where $F_{1,\lambda}$ and $G_{1,\lambda}$ denote, for any $\lambda\in(0,\infty)$, the Moreau--Yosida approximations of $F_1$ and $G_1$, respectively. These are defined as
    \begin{align*}
        F_{1,\lambda}(s) = \inf_{r\in\R}\left\{\frac{1}{2\lambda}\abs{s-r}^2 + F_1(r)\right\}, \qquad G_{1,\lambda}(s) = \inf_{r\in\R}\left\{\frac{1}{2\lambda}\abs{s-r}^2 + G_1(r)\right\}
    \end{align*}
    for all $s\in\R$, respectively. Then, the derivatives are given as
    \begin{align*}
        F_{1,\lambda}^\prime(s) = \frac1\lambda\Big( s - (I + \lambda F_1^\prime)^{-1}(s)\Big), \qquad G_{1,\lambda}^\prime(s) = \frac1\lambda\Big(s - (I + \lambda G_1^\prime)^{-1}(s)\Big)
    \end{align*}
    for all $s\in\R$.
    For any $\lambda>0$, $F_\lambda$ has the following important properties:
    \begin{enumerate}[label=\textbf{(M\arabic*)},topsep=0ex,leftmargin=*]
        \item \label{Yosida:Reg} For every $\lambda > 0$, $F_{1,\lambda}\in C^{2,1}_{\textup{loc}}(\R)$ with $F_{1,\lambda}(0) = F_{1,\lambda}^\prime(0) = 0$;
        \item \label{Yosida:Convexity}  $F_{1,\lambda}$ is convex with
        \begin{align*}
            F_{1,\lambda}^{\prime\prime}(s) \geq \frac{\Theta_\Om}{1 + \Theta_\Om} \quad\text{for all~}s\in\R;
        \end{align*}
        \item \label{Yosida:Lipschitz}  For every $\lambda > 0$, $F_{1,\lambda}^\prime$ is Lipschitz continuous on $\R$ with constant $\frac1\lambda$.
        \item \label{Yosida:Growth} There exists $\overline\lambda\in(0,1)$ and $C > 0$ such that
        \begin{align*}
            F_{1,\lambda}(s) \geq \frac{1}{4\overline\lambda}s^2 - C \qquad\text{for all~}s\in\R\text{~and~}\lambda\in(0,\overline\lambda);
        \end{align*}
        \item \label{Yosida:Convergence}  
        For every $\lambda > 0$, it holds $F_{1,\lambda}(s)\le  F_1(s)$ for all $s\in[-1,1]$.
        Moreover, as $\lambda \rightarrow 0$, we have $F_{1,\lambda}(s)\rightarrow F_1(s)$ for all $s\in[-1,1]$ and $\vert F_{1,\lambda}^\prime(s)\vert \rightarrow \abs{F_1^\prime(s)}$ for $s\in(-1,1)$.
    \end{enumerate}
    Analogous properties hold true for $G_{1,\lambda}$ instead of $F_{1,\lambda}$.
    Additionally, as a consequence of the domination property \eqref{DominationProperty}, 
    we obtain an analogous estimate for the corresponding Yosida regularizations.
    Namely, for any $\lambda>0$, it holds that
    \begin{align}
        \label{DOMINATION:YOSIDA}
        \abs{F_{1,\lambda}^\prime(\alpha s)} \leq \kappa_1\abs{G_{1,\lambda}^\prime(s)} + \kappa_2 \qquad\text{for all~}s\in\R
    \end{align}
    (see, e.g., \cite[Lemma 4.4]{Calatroni2013}), where $\kappa_1, \kappa_2$ are the same constants as in \eqref{DominationProperty}.
    Besides, using standard mollification, we find a sequence $\{(\bv_\lambda,\bw_\lambda)\}_{\lambda\in(0,1)}\subset C_c^\infty(\R;\mathcal{H}^1\cap\BLL^2_\Div)$ such that 
    \begin{align}\label{Conv:vw:lambda}
        \begin{split}
            (\bv_\lambda,\bw_\lambda) \rightarrow (\bv,\bw) &\qquad\text{strongly in~} L^2_{\mathrm{uloc}}(\R;\mathcal{H}^1), \\
            &\qquad\text{weakly-$\ast$ in~} L^\infty(\R;\BLL^2),
        \end{split}
    \end{align}
    as $\lambda\rightarrow0$. For more details, we refer to \cite[S.~4.13--4.15]{Alt2016}.
    In particular, according to \cite[S.~4.13 (2)]{Alt2016}, we have 
    \begin{align}
        \label{Est:vw:lambda}
        \norm{(\bv_\lambda,\bw_\lambda)}_{L^2_\mathrm{uloc}(\R;\mathcal{H}^1)} 
        &\leq \norm{(\bv,\bw)}_{L^2_\mathrm{uloc}(\R;\mathcal{H}^1)} 
    \end{align}
    for all $\lambda > 0$. 
    According to Proposition~\ref{PROP:WS}, for every $\lambda\in (0,1)$, there exists a unique solution $(\phi_\lambda,\psi_\lambda,\mu_\lambda,\theta_\lambda)$ of \eqref{conCH*} in the sense of Definition~\ref{DEF:WS} where $(F,G)$ and $(\bv,\bw)$ are replaced by $(F_\lambda, G_\lambda)$ and $(\bv_\lambda,\bw_\lambda)$, respectively. 
    Arguing now for instance as in~\cite{Giorgini2025}, one can show that 
    \begin{align*}
        &\norm{(\delt\phi_\lambda,\delt\psi_\lambda)}_{L^2(\tau,\infty;(\mathcal{H}^1_{L,\beta})^\prime)} + \norm{(\phi_\lambda,\psi_\lambda)}_{L^\infty(\tau,\infty;\mathcal{H}^1)\cap L^2_{\mathrm{uloc}}(\tau,\infty;\mathcal{W}^{2,6})} \\
        &\qquad + \norm{(\mu_\lambda,\theta_\lambda)}_{L^2_{\mathrm{uloc}}(\tau,\infty;\mathcal{H}^1)} + \norm{(F_\lambda^\prime(\phi_\lambda),G_\lambda^\prime(\psi_\lambda))}_{L^2_{\mathrm{uloc}}(\tau,\infty;\mathcal{L}^6)} \leq C.
    \end{align*}
    Thus, employing the Banach--Alaoglu theorem and the Aubin--Lions--Simon lemma, we deduce that there exists a quadruplet $(\phi_\ast,\psi_\ast,\mu_\ast,\theta_\ast)$ such that, for any $T\in(0,\infty)$,
    \begin{alignat*}{2}
        (\delt\phi_\lambda,\delt\psi_\lambda) &\rightarrow (\delt\phi_\ast,\delt\psi_\ast) &&\qquad\text{weakly in~} L^2(\tau,T;(\mathcal{H}^1_{L,\beta})^\prime), \\
        (\phi_\lambda,\psi_\lambda) &\rightarrow (\phi_\ast,\psi_\ast) &&\qquad\text{weakly-star in~} L^\infty(\tau,T;\mathcal{H}^1_{K,\alpha}), \\
        & &&\qquad\text{stongly in~} C([\tau,T];\mathcal{H}^s) \text{~for any~}s\in[0,1), \\
        (\mu_\lambda,\theta_\lambda) &\rightarrow (\mu_\ast,\theta_\ast) &&\qquad\text{weakly in~} L^2(\tau,T;\mathcal{H}^1_{L,\beta}), \\
        (F_\lambda^\prime(\phi_\lambda),G_\lambda^\prime(\psi_\lambda)) &\rightarrow (F^\prime(\phi_\ast), G^\prime(\psi_\ast)) &&\qquad\text{weakly in~} L^2(\tau,T;\mathcal{L}^2)
    \end{alignat*}
    as $\lambda\rightarrow 0$, up to a subsequence extraction. It is then straightforward to show that $(\phi_\ast,\psi_\ast,\mu_\ast,\theta_\ast)$ is a weak solution to \eqref{conCH*} in the sense of Definition~\ref{DEF:WS}, and thus, by uniqueness, $(\phi_\ast,\psi_\ast,\mu_\ast,\theta_\ast) = (\phi,\psi,\mu,\theta)$.
    Then, slightly modifying the proof in Case~1, we deduce that
    \begin{align*}
        (\delt\phi_\lambda,\delt\psi_\lambda)&\in L^\infty(\varsigma,\infty;(\mathcal{H}^1_{L,\beta})^\prime)\cap L^2_{\mathrm{uloc}}(\varsigma,\infty;\mathcal{H}^1), \\
        (\phi_\lambda,\psi_\lambda)&\in L^\infty(\varsigma,\infty;\mathcal{W}^{2,6}), \\
        (\mu_\lambda,\theta_\lambda)&\in L^\infty(\varsigma,\infty;\mathcal{H}^1_{L,\beta})\cap  L^2_{\mathrm{uloc}}(\tau,\infty;\mathcal{H}^2), \\
        (F_\lambda^\prime(\phi_\lambda),G_\lambda^\prime(\psi_\lambda))&\in L^\infty(\varsigma,\infty;\mathcal{L}^6)
    \end{align*}
    for any $\varsigma > \tau$.
    Next, following \cite{Giorgini2025}, we infer that
    \begin{align}
        \label{EST:GW2:1}
        \begin{split}
            &\ddt\norm{(\mu_\lambda,\theta_\lambda)}_{L,\beta}^2 
                + \frac12\norm{(\delt\phi_\lambda,\delt\psi_\lambda)}_{K,\alpha}^2 
            \\
            &\qquad\leq C\big(\norm{(\bv_\lambda,\bw_\lambda)}_{\mathcal{H}^1}^2 
                + \norm{(\mu_\lambda,\theta_\lambda)}_{L,\beta}^2\big) 
                + C\norm{(\bv_\lambda,\bw_\lambda)}_{\mathcal{H}^1}^2
                    \norm{(\mu_\lambda,\theta_\lambda)}_{L,\beta}^2
            \\
            &\qquad\leq C\big( 1 + \norm{(\bv_\lambda,\bw_\lambda)}_{\mathcal{H}^1}^2 \big) 
                + C\big( 1 + \norm{(\bv_\lambda,\bw_\lambda)}_{\mathcal{H}^1}^2 \big)
                    \norm{(\mu_\lambda,\theta_\lambda)}_{L,\beta}^2
        \end{split}
    \end{align}
    on $[\tau,\infty)$.

    Now, we fix an arbitrary $r\in (0,1]$, and we intend to 
    apply the Uniform Gronwall Lemma (Lemma~\ref{Lemma:Gronwall}) with $t_0 \coloneqq \tau$.
    This requires showing that there exist positive constants $a_1,a_2,a_3>0$ such that 
    \begin{align}
        \label{EST:GW:A12*}
        \int_t^{t + r} C\big( 1 + \norm{(\bv_\lambda(s),\bw_\lambda(s))}_{\mathcal{H}^1}^2 \big) \ds 
        &\le a_1 = a_2,
        \\
        \label{EST:GW:A3*}
        \int_t^{t+r} \norm{(\mu_\lambda(s),\theta_\lambda(s))}_{L,\beta}^2 \ds 
        &\le a_3
    \end{align}
    for every $t\ge \tau$.

    In view of \eqref{Est:vw:lambda}, we obviously have
    \begin{align*}
        &\int_t^{t + r} C\big( 1 + \norm{(\bv_\lambda(s),\bw_\lambda(s))}_{\mathcal{H}^1}^2 \big) \ds 
        \le C + C \int_t^{t + 1} \norm{(\bv_\lambda(s),\bw_\lambda(s))}_{\mathcal{H}^1}^2  \ds
        \\[1ex]
        &\quad\le C + C \norm{(\bv,\bw)}_{L^2_\mathrm{uloc}(\R;\mathcal{H}^1)}^2
        \eqqcolon a_1 \eqqcolon a_2.
    \end{align*}
    This verifies \eqref{EST:GW:A12*}.
    Moreover, applying Lemma~\ref{LEM:UNIFORMBOUNDEDNESS}, and 
    using \ref{Yosida:Convergence} (and the analogous statement for $G_{1,\lambda}$ and $G_1$) along with \eqref{EST:GW:A12*}, we deduce that
    \begin{align*}
        &\int_t^{t+r} \norm{(\mu_\lambda(s),\theta_\lambda(s))}_{L,\beta}^2 \ds
        \le 2 E_{\mathrm{free}}(\phi_\tau,\psi_\tau) 
            + \int_\tau^{t+1} \norm{(\bv_\tau(s),\bw_\tau(s))}_{\LL^2}^2 \ds \ds
        \\[1ex]
        &\quad
        \le 2 E_{\mathrm{free}}(\phi_\tau,\psi_\tau) + C\, \norm{(\bv,\bw)}_{L^2(\R;\LL^2)}^2 \eqqcolon a_3.
    \end{align*}
    Now, using the Uniform Gronwall Lemma (Lemma~\ref{Lemma:Gronwall}) with $t_0 \coloneqq \tau$, we deduce that
    \begin{align*}
        \norm{(\mu_\lambda(t+r),\theta_\lambda(t+r))}_{L,\beta}
        \leq \left(\frac{a_3}{r} + a_2\right)\mathrm{e}^{a_1} \qquad\text{for all~}t\geq \tau.
    \end{align*}
    As in Case~1, we infer that
    \begin{align}
        \label{EST:GW2:2}
        \norm{(\mu_\lambda(s),\theta_\lambda(s))}_{L,\beta} \le C
        \quad\text{for all $s\ge \varsigma$.}
    \end{align}
    After integrating \eqref{EST:GW2:1} from $t$ to $t+1$ with respect to time, we use \eqref{Est:vw:lambda} and \eqref{EST:GW2:2} to obtain
    \begin{align*}
        &\int_t^{t+1}\norm{(\delt\phi_\lambda(s),\delt\psi_\lambda(s))}_{K,\alpha}^2\ds 
        \leq C + C\int_t^{t+1}\norm{(\bv_\lambda(s),\bw_\lambda(s))}_{\mathcal{H}^1}^2 \ds
        \\
        &\quad
        \leq C + C\norm{(\bv,\bw)}_{L^2_\mathrm{uloc}(\R;\mathcal{H}^1)}^2. 
    \end{align*}
    for all $t\ge \varsigma$.
    Now, sending $\lambda\rightarrow 0$, we conclude that
    \begin{align*}
        &t\mapsto \norm{(\mu(t),\theta(t))}_{L,\beta}\in L^\infty(\varsigma,\infty), \\
        &t\mapsto \norm{(\delt\phi(t),\delt\psi(t))}_{K,\alpha}\in L^2_{\mathrm{uloc}}(\varsigma,\infty).
    \end{align*}
    for any $\varsigma > \tau$.
    Finally, testing \eqref{WF:1} with $(\zeta,\xi)\in\mathcal{H}^1_{L,\beta}$ satisfying $\norm{(\zeta,\xi)}_{\mathcal{H}^1} \leq 1$ and taking the supremum, we infer
    \begin{align*}
        \norm{(\delt\phi(t),\delt\psi(t))}_{(\mathcal{H}^1_{L,\beta})^\prime} \leq \norm{(\bv(t),\bw(t))}_{\mathcal{L}^2} + C\norm{(\mu(t),\theta(t))}_{L,\beta}
    \end{align*}
    for all $t\geq\varsigma$. Consequently, $(\delt\phi,\delt\psi)\in L^\infty(\varsigma,\infty;(\mathcal{H}^1_{L,\beta})^\prime)$. As in Case~1, the remaining regularity properties can be verified as in \cite{Knopf2025, Giorgini2025}.
    \blk%
\end{proof}

\medskip\pagebreak[3]

Using Theorem~\ref{thm:highreg}, we easily obtain the following regularity property. 

\begin{corollary}\label{COR:H3}
     Suppose that the assumptions of Theorem \ref{thm:highreg} are fulfilled. Moreover, assume that there exists a time $T>\tau$ such that $\phi$ and $\psi$ are uniformly strictly separated on $[T,\infty)$, that is, there exists $\delta>0$ such that 
     \begin{align}
        \label{PROP:SEP}
         \sup_{t\geq T}\norm{(\phi(t),\psi(t))}_{L^\infty(\Omega\times \Gamma)}\leq 1-\delta.
     \end{align}
     Then it holds 
     \begin{align}\label{H3}
         \norm{(\phi,\psi)}_{L^\infty(T,\infty;\mathcal H^3)}\leq C(T)
     \end{align}
     for some constant $C(T)>0$ that may depend on $T$.
\end{corollary}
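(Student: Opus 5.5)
The plan is to view, for a.e.\ $t\ge T$, the equations \eqref{EQ:MUTH:STRG} as a stationary bulk-surface elliptic problem for $(\phi(t),\psi(t))$ with right-hand side $(\mu - F'(\phi),\, \theta - G'(\psi))$. We then apply $\mathcal{H}^3$-regularity for the coupled elliptic system with boundary condition \eqref{conCH*:5}. For $K\in[0,\infty]$ and $\Gamma$ of class $C^{2,1}$, this regularity is available from the elliptic theory used in \cite{Knopf2021,Knopf2025}. It gives
\begin{align*}
\norm{(\phi,\psi)}_{\mathcal{H}^3}\le C\big(\norm{(\mu - F'(\phi),\theta-G'(\psi))}_{\mathcal{H}^1} + \norm{(\phi,\psi)}_{\mathcal{L}^2}\big).
\end{align*}
The whole task is therefore to bound the right-hand side uniformly for a.e.\ $t\ge T$.

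First, Theorem~\ref{thm:highreg} (applied with some $\varsigma\in(\tau,T]$) gives $(\mu,\theta)\in L^\infty(T,\infty;\mathcal{H}^1_{L,\beta})$ in terms of $\norm{\cdot}_{L,\beta}$. To upgrade this to a full $\mathcal{H}^1$ bound, we control the mean: in the weak formulation \eqref{WF:2} we test with the constants $(1,1)$, or with $(1,\alpha)$ when $K=0$. Together with $(F'(\phi),G'(\psi))\in L^\infty(T,\infty;\mathcal{L}^6)$ and $(\phi,\psi)\in L^\infty(\mathcal{W}^{2,6})$ (so $\deln\phi$ is bounded), this controls $\intO\mu+\intG\theta$. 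A Poincar\'e-type argument then yields $(\mu,\theta)\in L^\infty(T,\infty;\mathcal{H}^1)$. Alternatively, one can reduce to $\mathcal{L}^2$ bounds by writing $\mu$ from the strong equations and using the $\mathcal{W}^{2,6}$ bound.

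Second, we use the separation property \eqref{PROP:SEP}. Since $|\phi|,|\psi|\le1-\delta$ on $[T,\infty)$ and $F_1,G_1\in C^2(-1,1)$, the derivatives $F_1'',G_1''$ are bounded on $[-1+\delta,1-\delta]$, and $F_2',G_2'$ are Lipschitz by \ref{S3}. Hence by the chain rule
\begin{align*}
\norm{\Grad F'(\phi)}_{\mathbf{L}^2(\Om)}\le C_\delta\norm{\Grad\phi}_{\mathbf{L}^2(\Om)},\qquad \norm{\Gradg G'(\psi)}_{\mathbf{L}^2(\Ga)}\le C_\delta\norm{\Gradg\psi}_{\mathbf{L}^2(\Ga)}.
\end{align*}
The norms $\norm{F'(\phi)}_{L^2}$ and $\norm{G'(\psi)}_{L^2}$ are bounded by $\max_{|s|\le1-\delta}|F'(s)|$ and the corresponding bound for $G'$. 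Combined with $(\phi,\psi)\in L^\infty(T,\infty;\mathcal{H}^1)$ from Proposition~\ref{PROP:WS}, this gives $(F'(\phi),G'(\psi))\in L^\infty(T,\infty;\mathcal{H}^1)$ with bounds depending on $\delta$ and $T$.

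The main obstacle is the elliptic $\mathcal{H}^3$ estimate for the coupled problem. The boundary equation involves $\alpha\deln\phi$, and for $K\in(0,\infty)$ the coupling is of Robin type. I would argue by bootstrapping in two steps.

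1. We first use $\phi\in H^2(\Om)$, so the trace $\deln\phi$ lies in $H^{1/2}(\Gamma)$. The surface equation $-\Lapg\psi+\psi = \theta - G'(\psi)+\alpha\deln\phi+\psi$, whose right-hand side is in $H^{1/2}(\Ga)$, then gives $\psi\in H^{5/2}(\Ga)$.
2. Next we treat the bulk Robin/Dirichlet problem $-\Lap\phi = \mu-F'(\phi)\in H^1(\Om)$ with boundary data built from $\psi\in H^{5/2}(\Ga)$. This yields $\phi\in H^3(\Om)$ and hence $\deln\phi\in H^{3/2}(\Ga)$. Returning to the surface equation, whose right-hand side is now in $H^1(\Gamma)$, gives $\psi\in H^3(\Ga)$.

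For $K=\infty$ the bulk problem is a pure Neumann problem and the steps decouple. For $K=0$, $\phi|_\Gamma=\alpha\psi$ and we instead use the Dirichlet problem, with the same two-step bootstrap. Since each step has constants depending only on $\Omega$ and the parameters, taking the essential supremum over $t\ge T$ yields \eqref{H3}.
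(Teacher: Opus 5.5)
Your proposal is correct and follows essentially the same route as the paper: the separation property together with Theorem~\ref{thm:highreg} gives a uniform $\mathcal{H}^1$ bound on $(\mu-F'(\phi),\theta-G'(\psi))$ for $t\ge T$, and bulk-surface elliptic regularity then yields the $\mathcal{H}^3$ bound. The paper simply cites that elliptic estimate (Knopf2021, Theorem~3.3, or Knopf2025, Proposition~A.1) instead of running your two-step bootstrap, and your proposal has only minor slips: for $K=0$ the admissible constant test pair in \eqref{WF:2} is $(\alpha,1)$ rather than $(1,\alpha)$; a single combination $\intO\mu+\intG\theta$ cannot control both means when $L=\infty$, but your alternative via the strong equations and the $\mathcal{W}^{2,6}$ bound covers this, and Theorem~\ref{thm:highreg} already states the full $\mathcal{H}^1$ bound; and the sign of $\alpha\deln\phi$ in your surface equation should be flipped.
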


\medskip

\begin{proof}
    Recalling \ref{S1} and \ref{S3}, and using the regularity $(\phi,\psi)\in L^\infty(\varsigma,\infty;\mathcal{W}^{2,6})$ established in Theorem~\ref{thm:highreg}, the separation property \eqref{PROP:SEP} readily implies that
    \begin{align*}
        \norm{(F^\prime(\phi(t)), G^\prime(\psi(t))}_{\mathcal{H}^1} \leq C
    \end{align*}
    for all $t\geq T$. In addition, we know from Theorem~\ref{thm:highreg} that $(\mu,\theta)\in L^\infty(T,\infty;\mathcal{H}^1)$. Consequently, regularity theory for elliptic systems with bulk-surface coupling (see \cite[Theorem 3.3]{Knopf2021} or \cite[Proposition A.1]{Knopf2025}) yields
    \begin{align*}
        \norm{(\phi(t),\psi(t))}_{\mathcal{H}^3} \leq C\big(1 + \norm{(\mu(t) - F^\prime(\phi(t)), \theta(t) - G^\prime(\psi(t)))}_{\mathcal{H}^1}\big) \leq C(T)
    \end{align*}
    for all $t\geq T$.
\end{proof}

\section{Existence of a minimal pullback attractor}\label{SEC:LTD}

In contrast to previous works on the long-time dynamics of non-convective Cahn--Hilliard systems with or without dynamic boundary conditions (see, e.g., \cite{AbelsWilke,Garcke2022,Lv2024a,GrasselliPoiatti}), additional challenges in our model arise because of the additional convection terms. As the prescribed velocity fields $\bv$ and $\bw$ may be time-dependent, the underlying dynamical system is non-autonomous. If the velocity fields are not constant in time, the dynamical system can no longer be interpreted as a semigroup, and the discussion of a possible attractor becomes more interesting. 
Therefore, the non-autonomous nature of our model requires moving from the framework of semigroups and global attractors to the more general setting of two-parameter processes and pullback attractors.

\pagebreak[3]

To prove the existence of a pullback attractor, we introduce an additional assumption:
\begin{enumerate}[label=\textnormal{\bfseries(D4)}, leftmargin=*]
    \item \label{D4} The velocity fields have the additional regularities
    \begin{equation*}
        (\bv,\bw) \in L^2_\mathrm{uloc}(\R;\mathbf{L}^6(\Omega)\times \mathbf{L}^6(\Gamma)),
    \end{equation*}
    and if $K=0$, we demand that $\alpha=\beta$.
\end{enumerate}

\subsection{Formulation as a process}

We first introduce the concept of a two-parameter continuous process, which generalizes the notion of semigroups.

\begin{definition}[Continuous process]\label{Def:Process}
    A \textbf{continuous process} on a metric space $X$ is a family $\{S(t,\tau)\}_{t\geq\tau}$ of continuous mappings $S(t,\tau):X\to X$ with the following properties:
    \begin{enumerate}[label=\textnormal{(\roman*)},topsep=0.5ex, leftmargin=*]
    \item For all $\tau\in\R$ and $ x\in X$, it holds $S(\tau,\tau)x=x$.
    \item For all $t\ge s \ge \tau$, it holds $S(t,\tau) = S(t,s)\, S(s,\tau)$.
    \item The mapping 
    \begin{equation*}
        \{(t,\tau) \in \R^2 \,:\, t\ge \tau\} \times X \ni
        (t,\tau,x)\mapsto S(t,\tau)x \in X
    \end{equation*}
    is continuous.
\end{enumerate}
\end{definition}

\medskip

The following lemma shows that the dynamical system associated with \eqref{conCH*} can actually be formulated as a process.

\begin{lemma} \label{THM:PROC}
Suppose that \ref{ASSUMP:1}--\ref{ASSUMP:2}, \ref{S1}--\ref{S3}, \ref{D1}--\ref{D3} are fulfilled, and for any $(\phi_\tau,\psi_\tau) \in \mathcal{X}_{K,L,m}$, let
\begin{equation*}
    [\tau,\infty) \ni t \mapsto \big(\phi^{K,L},\psi^{K,L},\mu^{K,L},\theta^{K,L}\big)(t;\tau,\phi_\tau,\psi_\tau) \in \mathcal{X}_{K,L,m}
\end{equation*}
denote the unique weak solution of system \eqref{conCH*} corresponding to the velocity fields $(\bv,\bw)$ on $[\tau,\infty)$, which satisfies the initial condition
\begin{equation*}
    \big(\phi^{K,L},\psi^{K,L}\big)(\tau;\tau,\phi_\tau,\psi_\tau) = (\phi_\tau,\psi_\tau).
\end{equation*} 
For $\tau\in\R$ and $t\ge \tau$, we define
\begin{align*}
    &S_m^{K,L}(t,\tau): \mathcal{X}_{K,L,m} \to \mathcal{X}_{K,L,m}\,,
    \\
    &S_m^{K,L}(t,\tau)(\phi_\tau,\psi_\tau) \coloneqq  \big(\phi^{K,L},\psi^{K,L}\big)(t;\tau,\phi_\tau,\psi_\tau).
\end{align*}

Then the family $\{S_m^{K,L}(t,\tau)\}_{t\ge\tau}$ defines a continuous process on $\mathcal{X}_{K,L,m}$.
\end{lemma}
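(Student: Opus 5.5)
The plan is to check the three properties of Definition~\ref{Def:Process}, after first verifying that $S_m^{K,L}(t,\tau)$ really maps $\mathcal{X}_{K,L,m}$ into itself.

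\textbf{Well-definedness.} Proposition~\ref{PROP:WS} provides a unique weak solution, and it gives:
\begin{itemize}
    \item $|\phi|<1$ and $|\psi|<1$ almost everywhere;
    \item the mass conservation law \eqref{WS:MASS};
    \item $(\phi,\psi)(t)\in\mathcal{H}^1_{K,\alpha}$ for every $t$. For $t>\tau$ this holds because Theorem~\ref{thm:highreg} gives $(\phi,\psi)\in L^\infty(\varsigma,\infty;\mathcal{W}^{2,6})$ and $(\delt\phi,\delt\psi)\in L^2_{\mathrm{uloc}}(\varsigma,\infty;\mathcal{H}^1)$, so $(\phi,\psi)\in C((\tau,\infty);\mathcal{H}^1)$ even when $L=0$.
\end{itemize}
Hence $S_m^{K,L}(t,\tau)(\phi_\tau,\psi_\tau)\in\mathcal{X}_{K,L,m}$. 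Property (i) is just the initial condition \eqref{INI}.

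\textbf{Property (ii).} For (ii), I restrict the solution starting at $\tau$ to $[s,\infty)$. This restriction has the regularity \eqref{WS:REG}, satisfies \eqref{WF}, and conserves mass. The only delicate point is the energy inequality \eqref{WS:ENERGY} starting exactly at $s$, since it is only known for almost every starting time. For $s>\tau$, the regularity of Theorem~\ref{thm:highreg} makes the energy identity hold on $[s,\infty)$, so \eqref{WS:ENERGY} holds from every $s$. Uniqueness in Proposition~\ref{PROP:WS} then yields $S(t,\tau)=S(t,s)S(s,\tau)$.

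\textbf{Property (iii): convergence in the $\ast$-norm.} Let $(t_n,\tau_n,x_n)\to(t,\tau,x)$ with $t_n\ge\tau_n$, and split
\[
S(t_n,\tau_n)x_n-S(t,\tau)x=\big[S(t_n,\tau_n)x_n-S(t_n,\tau_n)x\big]+\big[S(t_n,\tau_n)x-S(t,\tau)x\big].
\]
The first bracket is handled by \eqref{est:cont:dep} with $\bv_1=\bv_2=\bv$ and $\bw_1=\bw_2=\bw$. The integrability $\mathcal{F}\in L^1_{\mathrm{loc}}$ follows from \ref{D3}, since in case \ref{D3:2} we have $\mathbf{H}^1\hookrightarrow\mathbf{L}^3$. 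The constant stays uniform because the initial energies $E_{\mathrm{free}}(x_n)$ are bounded, which holds since convergence in $\mathrm{d}_{\mathcal{X}_{K,L,m}}$ implies convergence of $E_{\mathrm{free}}$. Thus the first bracket tends to $0$ in $\norm{\cdot}_{L,\beta,\ast}$.

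For the second bracket, use (ii) to write, for example when $\tau_n\le\tau$, $S(t_n,\tau_n)x=S(t_n,\tau)S(\tau,\tau_n)x$. By \eqref{s0.3},
\[
\norm{S(\tau,\tau_n)x-x}_{L,\beta,\ast}\le\int_{\tau_n}^\tau\norm{(\delt\phi,\delt\psi)}_{L,\beta,\ast}\,\mathrm{d}r\le C|\tau-\tau_n|^{1/2},
\]
uniformly in $n$. Applying \eqref{est:cont:dep} again reduces the problem to $S(t_n,\tau)x\to S(t,\tau)x$, which is time continuity of a single trajectory. The case $\tau_n>\tau$ is symmetric. Altogether we get convergence in the $\ast$-norm. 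By the uniform $\mathcal{H}^1$ bound of Lemma~\ref{LEM:UNIFORMBOUNDEDNESS}, this upgrades to weak convergence in $\mathcal{H}^1$.

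\textbf{Property (iii): upgrade to the metric $\mathrm{d}_{\mathcal{X}_{K,L,m}}$.} I expect this upgrade to be the main obstacle.
\begin{itemize}
    \item \emph{Case $t>\tau$.} Choose $\varsigma\in(\tau,t)$. The $L^\infty(\varsigma,\infty;\mathcal{W}^{2,6})$ bounds from Theorem~\ref{thm:highreg} are uniform in $n$, since their constants depend only on the energy and on $\tau$, $\varsigma$. Compactness then gives strong $\mathcal{H}^1$ convergence and almost everywhere convergence. As $F_1$ and $G_1$ are continuous and bounded on $[-1,1]$, dominated convergence gives convergence of the $F_1$ and $G_1$ integrals.
    \item \emph{Case $t=\tau$.} Here there is no smoothing, and I would use the energy inequality instead. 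By \eqref{s0.1},
    \[
    E_{\mathrm{free}}(S(t_n,\tau_n)x_n)\le E_{\mathrm{free}}(x_n)+\tfrac12\int_{\tau_n}^{t_n}\norm{(\bv,\bw)}_{\LL^2}^2\,\mathrm{d}s\to E_{\mathrm{free}}(x),
    \]
    because the interval $[\tau_n,t_n]$ shrinks. Each summand of $E_{\mathrm{free}}$ is weakly lower semicontinuous, and the quadratic boundary penalty is compact. Therefore limsup of the sum $\le E_{\mathrm{free}}(x)\le$ liminf of the sum, which forces every summand to converge. In particular $\norm{\Grad\phi_n}$ and $\norm{\Gradg\psi_n}$ converge, which together with weak convergence gives strong $\mathcal{H}^1$ convergence, and $\int F_1$, $\int G_1$ converge.
\end{itemize}
The quadratic parts $F_2$ and $G_2$ are handled by strong $\mathcal{L}^2$ convergence. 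This completes the continuity in $\mathrm{d}_{\mathcal{X}_{K,L,m}}$ and hence property (iii).
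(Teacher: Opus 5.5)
Your proposal is correct. It uses the same three ingredients as the paper: the stability estimate \eqref{est:cont:dep}, the smoothing of Theorem~\ref{thm:highreg}, and the $\limsup$/$\liminf$ energy argument at the initial time. The difference is in how they are assembled.

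The paper argues only separate continuity. It asserts continuity in $\tau$ and in the initial datum directly from \eqref{est:cont:dep}. It then proves $t\mapsto S_m^{K,L}(t,\tau)(\phi_\tau,\psi_\tau)\in C([\tau,\infty);\HH^1)$ for one fixed trajectory, using Aubin--Lions--Simon on $(\tau,\infty)$ and the energy argument at $t=\tau$. Since \eqref{est:cont:dep} only controls $\norm{\cdot}_{L,\beta,\ast}$ and is stated for a common initial time, this leaves implicit two things required by Definition~\ref{Def:Process}(iii): the passage to the metric $\mathrm{d}_{\mathcal{X}_{K,L,m}}$, and joint continuity in $(t,\tau,x)$.

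Your proposal closes both points.
\begin{itemize}
\item The splitting combined with the cocycle identity handles varying $\tau_n$.
\item The two-case upgrade gives convergence in $\mathrm{d}_{\mathcal{X}_{K,L,m}}$. For $t>\tau$ you use uniform-in-$n$ $\mathcal{W}^{2,6}$ bounds plus compactness. For $t=\tau$ you run the energy argument with moving data $(t_n,\tau_n,x_n)$, using that $\mathrm{d}_{\mathcal{X}_{K,L,m}}$-convergence of $x_n$ gives $E_{\mathrm{free}}(x_n)\to E_{\mathrm{free}}(x)$.
\item You also justify (ii) at the exact time $s$ via the energy identity for $s>\tau$; the paper simply calls this straightforward.
\end{itemize}

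When writing it up, make two points explicit.
\begin{itemize}
\item The constants in Theorem~\ref{thm:highreg} and in \eqref{est:cont:dep} depend on $\tau,\varsigma,T$ only through $\varsigma-\tau$ and $T-\tau$. This holds because all velocity norms involved are translation invariant, and it is what makes your bounds uniform in $n$.
\item The subcase $t=\tau$ with $t_n<\tau$ is not covered by the rewriting $S(t_n,\tau_n)=S(t_n,\tau)S(\tau,\tau_n)$, which requires $t_n\ge\tau$. Treat it directly via $\norm{S(t_n,\tau_n)x-x}_{L,\beta,\ast}\le C\abs{t_n-\tau_n}^{1/2}$.
\end{itemize}
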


\begin{proof}
   The dynamical system is well-defined by Proposition~\ref{PROP:WS}. It is straightforward to check that conditions (i) and (ii) in Definition \ref{Def:Process} are satisfied.  We now show that the map 
    \begin{equation*}
        [\tau,\infty)\times \R \times \mathcal{X}_{K,L,m} 
        \ni (t;\tau,\phi_\tau,\psi_\tau)
        \to S_m^{K,L}(t,\tau)(\phi_\tau,\psi_\tau)
        \in \mathcal{X}_{K,L,m} 
    \end{equation*}
    is continuous.
    Continuity with respect to $\tau$ and $(\phi_*,\psi_*)$ follows from \eqref{est:cont:dep}. We further need to show that
    \begin{equation*}
        t\mapsto \big(\phi^{K,L},\psi^{K,L}\big)(t;\tau,\phi_\tau,\psi_\tau) \in C([\tau,\infty);\HH^1).
    \end{equation*}
    In view of the regularities established in Theorem~\ref{thm:highreg}, we use the Aubin--Lions--Simon lemma to infer that $(\phi^{K,L},\psi^{K,L})(\,\cdot\,;\tau,\phi_\tau,\psi_\tau) \in C((\tau,\infty);\HH^1)$. To prove continuity at $t = \tau$, we consider any sequence $(t_k)_{k\in\N} \subset (\tau,\infty)$ with $t_k\to \tau$. As the sequence $((\phi^{K,L},\psi^{K,L})(t_k))_{k\in\N}$ is bounded in $\HH^1,$ we deduce that, after subsequence extraction,
    \begin{equation*}
        (\phi^{K,L},\psi^{K,L})(t_k) \to (\phi_\tau,\psi_\tau)
        \quad\text{weakly in $\HH^1$, strongly in $\LL^2$, and a.e.~in $\Omega\times\Gamma$.}
    \end{equation*}
    Moreover, we have 
    \begin{align*}
        &\tfrac 12 \norm{(\phi^{K,L}(t_k),\psi^{K,L}(t_k))}_{K,\alpha}^2
            - \tfrac 12 \norm{(\phi_\tau,\psi_\tau)}_{K,\alpha}^2
        \\[1ex]
        &\quad = E_{\mathrm{free}}(\phi^{K,L}(t_k),\psi^{K,L}(t_k)) - E_{\mathrm{free}}(\phi_\tau,\psi_\tau)
        \\
        &\qquad
            + \intO F(\phi_\tau) - F\big(\phi^{K,L}(t_k)\big) \dx
            + \intG G(\psi_\tau) - G\big(\psi^{K,L}(t_k)\big) \dG.
    \end{align*}
    Since $F$ and $G$ are continuous, the corresponding integrals converge to zero by the dominated convergence theorem. Furthermore, it is easy to see that 
    \begin{align*}
        E_{\mathrm{free}}(\phi_\tau,\psi_\tau) \le \underset{k\to\infty}{\liminf}\; E_{\mathrm{free}}(\phi^{K,L}(t_k),\psi^{K,L}(t_k))
    \end{align*}
    as all terms in the energy are either convex (and thus weakly lower semicontinuous) or continuous (by dominated convergence).
    However, by taking the limit superior in the energy inequality \eqref{WS:ENERGY}, we infer that 
    \begin{align*}
         \underset{k\to\infty}{\limsup}\; E_{\mathrm{free}}(\phi^{K,L}(t_k),\psi^{K,L}(t_k))
         \le E_{\mathrm{free}}(\phi_\tau,\psi_\tau).
    \end{align*}
    Thus, we conclude that $E_{\mathrm{free}}(\phi^{K,L}(t_k),\psi^{K,L}(t_k)) \to E_{\mathrm{free}}(\phi_*,\psi_*)$, and this directly yields
    \begin{align*}
        \tfrac 12 \norm{(\phi^{K,L}(t_k),\psi^{K,L}(t_k))}_{K,\alpha}^2
        \to \tfrac 12 \norm{(\phi_\tau,\psi_\tau)}_{K,\alpha}^2.
    \end{align*}
    Together with the weak convergence in $\HH^1$, this implies that
    \begin{align*}
        (\Grad\phi^{K,L}(t_k),\Gradg\psi^{K,L}(t_k)) \to (\Grad\phi_\tau,\Gradg\psi_\tau)
        \quad\text{in $\LL^2$}.
    \end{align*}
    As the limit is independent of the previously extracted subsequence, this convergence actually remains true for the whole sequence $(t_k)_{k\in\N}$.
    Hence, we conclude that $(\Grad\phi^{K,L},\Gradg\psi^{K,L}) \in C([\tau,\infty);\mathbf{L}^2(\Omega)\times \mathbf{L}^2(\Omega))$.
    In combination with $(\phi^{K,L},\psi^{K,L}) \in C([\tau,\infty);\LL^2)$, this proves the claim.    
\end{proof}

In the special case of constant velocity fields, the continuous process $\{S_m^{K,L}(t,\tau)\}_{t\ge \tau}$ in fact coincides with a continuous semigroup. However, in view of our assumptions, the only choice of constant velocity fields that agrees with \ref{D2} is $(\bv,\bw)\equiv (\mathbf{0},\mathbf{0})$.

\begin{corollary}\label{COR:Semigroup}
    Suppose that the assumptions of Theorem~\ref{THM:PROC} hold with $(\bv,\bw)\equiv (\mathbf{0},\mathbf{0})$. 

    Then, for all $s\in\R$ and $t\ge s$, we have $S_m^{K,L}(t,s) = S_m^{K,L}(t-s,0)$. This means that the process is autonomous. For $t\ge 0$, we define
    \begin{align*}
    T_m^{K,L}(t) \coloneqq  S_m^{K,L}(t,0): \mathcal{X}_{K,L,m} \to \mathcal{X}_{K,L,m}.
    \end{align*}
    This family $T_m^{K,L}(t)$ has the following properties:
    \begin{enumerate}[label=\textnormal{(\roman*)}, leftmargin=*]
    \item $T_m^{K,L}(0)$ is the identity map on $\mathcal{X}_{K,L,m}$.
    \item (Semigroup property) For all $t,s \ge 0$, $T_m^{K,L}(t+s) = T_m^{K,L}(t)\, T_m^{K,L}(s)$.
    \item The mapping 
    \begin{equation*}
        [0,\infty)\times \mathcal{X}_{K,L,m} 
        \ni (t,\phi_0,\psi_0)
        \to T_m^{K,L}(t)(\phi_0,\psi_0)
        \in \mathcal{X}_{K,L,m} 
    \end{equation*}
    is continuous.
    \end{enumerate}
    This means that the family $\{T_m^{K,L}(t)\}_{t\ge 0}$ defines a continuous semigroup on $\mathcal{X}_{K,L,m}$.
\end{corollary}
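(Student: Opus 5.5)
The plan is to deduce the statement from uniqueness of weak solutions together with the time-translation invariance of system \eqref{conCH*} when $(\bv,\bw)\equiv(\mathbf{0},\mathbf{0})$; properties (i)--(iii) will then be direct consequences of Lemma~\ref{THM:PROC}.

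\emph{Step 1 (autonomy).} Fix $s\in\R$ and $(\phi_s,\psi_s)\in\mathcal{X}_{K,L,m}$. Let $(\phi,\psi,\mu,\theta)$ be the unique weak solution on $[0,\infty)$ with initial datum $(\phi_s,\psi_s)$ at time $0$, given by Proposition~\ref{PROP:WS}. Define the shifted quadruplet $(\tilde\phi,\tilde\psi,\tilde\mu,\tilde\theta)(t)\coloneqq(\phi,\psi,\mu,\theta)(t-s)$ for $t\ge s$.

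I will verify item by item that this shifted quadruplet is a weak solution on $[s,\infty)$ in the sense of Definition~\ref{DEF:WS}:
\begin{enumerate}[label=\textnormal{(\alph*)}, leftmargin=*]
\item The regularity classes \eqref{WS:REG}, including the $L^2_\mathrm{loc}$ and $H^1_\mathrm{loc}$ spaces, are invariant under time translation.
\item The initial condition \eqref{INI} holds at $t=s$.
\item The weak formulation \eqref{WF} contains no explicit time dependence once the convective terms vanish, so it is preserved because $\partial_t\tilde\phi(t)=(\partial_t\phi)(t-s)$.
\item The mass conservation law \eqref{WS:MASS} carries over directly.
\item The energy inequality \eqref{WS:ENERGY}, now without velocity terms, transforms by the change of variables $r\mapsto r-s$. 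The exceptional null set of admissible initial times is shifted accordingly.
\end{enumerate}
By the uniqueness part of Proposition~\ref{PROP:WS}, it follows that $S_m^{K,L}(t,s)(\phi_s,\psi_s)=\tilde\phi,\tilde\psi$ evaluated at $t$, which equals $(\phi,\psi)(t-s)=S_m^{K,L}(t-s,0)(\phi_s,\psi_s)$.

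\emph{Step 2 (semigroup properties).}
\begin{enumerate}[label=\textnormal{(\roman*)}, leftmargin=*]
\item This is Definition~\ref{Def:Process}(i) with $\tau=0$.
\item Using Definition~\ref{Def:Process}(ii) and Step 1, for $t,s\ge0$ we get
\begin{align*}
T_m^{K,L}(t+s)=S_m^{K,L}(t+s,s)\,S_m^{K,L}(s,0)=S_m^{K,L}(t,0)\,S_m^{K,L}(s,0)=T_m^{K,L}(t)\,T_m^{K,L}(s).
\end{align*}
\item Continuity of $(t,\phi_0,\psi_0)\mapsto T_m^{K,L}(t)(\phi_0,\psi_0)$ is the restriction of the joint continuity in Lemma~\ref{THM:PROC} to $\tau=0$.
\end{enumerate}

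The only point that needs some care is the energy inequality in Step 1. It holds for almost every $s'$ including the initial time, and one has to check that the shifted solution satisfies it for almost every start time including $t=s$. This follows because a translation maps null sets to null sets. Beyond that, the argument is bookkeeping.
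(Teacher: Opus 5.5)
Your proposal is correct and follows essentially the same route as the paper. You justify the autonomy identity $S_m^{K,L}(t,s)=S_m^{K,L}(t-s,0)$ more carefully, via time-translation of the weak solution and uniqueness from Proposition~\ref{PROP:WS}, where the paper simply asserts that the evolution is independent of the initial time; you then obtain (i)--(iii) from the process properties of Lemma~\ref{THM:PROC}, using exactly the paper's chain of equalities for the semigroup property.
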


\pagebreak[3]

\begin{proof}
   The identity $S_m^{K,L}(t,s) = S_m^{K,L}(t-s,0)$ holds since the time evolution is independent of the initial time. Property (ii) holds since 
        \begin{align*}
            T_m^{K,L}(t+s) 
            &= S_m^{K,L}(t+s,0)
            = S_m^{K,L}(t+s,s) \, S_m^{K,L}(s,0)
            \\
            &= S_m^{K,L}(t+s-s,0) \, S_m^{K,L}(s,0)
            = T_m^{K,L}(t)\, T_m^{K,L}(s).
        \end{align*}
       The remaining properties can be easily verified.
\end{proof}

\subsection{Existence of a unique pullback attractor}

Now, we are going to investigate the long-time behavior of the non-autonomous dynamical system $ \big(S_{m}^{K,L}(t,\tau),\mathcal{X}_{K,L,m}\big)$ through the theory of pullback attractors. As a preliminary step, we first recall the fundamental framework, including the basic definitions and a key theorem concerning pullback attractors, whose assumptions will be verified for our system in the subsequent analysis. In the following, for any metric space $X$, we write $\mathcal{P}(X)$ to denote the family of all non-empty subsets of $X$.

We start by recalling the main concepts related to pullback dynamics (see, e.g.,~\cite{marinrubio2009}), which will be used to establish our main result.

\medskip

\begin{definition}[Pullback absorbing sets]
  Let $\{S(t,\tau)\}_{t \geq \tau}$ be a continuous process on a metric space $X$.
    A family 
    $\widehat{D}_0 = \{ D_0(t) \}_{t\in\R}$ of non-empty sets is called
    \textbf{pullback absorbing} for the process $\{S(t,\tau)\}_{t \geq \tau}$, if for any bounded set ${D}\in \mathcal P(X)$ and any $t \in \R$, 
    there exists a time $\tau_0 (t,D) \leq t$ such that
    \begin{align*}
      S(t,\tau)D \subset D_0(t) \quad \text{for all } \tau \leq \tau_0 (t,{D}).  
    \end{align*}
    In this case, $\tau_0 (\cdot,\cdot)$ is referred to as the corresponding \textbf{pullback absorption time} for the absorbing family $\widehat{D}_0$.
\end{definition}

\medskip

\begin{definition}[Pullback attractor] \label{Definition:PullbackAttractor}
    Let $\{S(t,\tau)\}_{t \geq \tau}$ be a continuous process on a metric space $X$. 
    A family $\mathcal{A} = \{ \mathcal{A}(t) \}_{t \in \R}$ of non-empty subsets of $X$ 
    is called a \textbf{pullback attractor} 
    for the process $\{S(t,\tau)\}_{t \geq \tau}$ 
    in $X$ if
    \begin{enumerate}[label=\textnormal{(\roman*)}, topsep=0.5ex, leftmargin=*]
        \item $ \mathcal{A}(t) $ is compact in $X$ for every $t \in \R$,
        \item $ \mathcal{A}$ is invariant, i.e., $S(t,\tau)\mathcal{A}(\tau) = \mathcal{A}(t)$ for all $t \geq \tau$,
        \item $ \mathcal{A}$ is pullback attracting, i.e., for any $t\in \R$ and any bounded set $D$ it holds 
        \begin{equation*}
            \lim_{\tau \to -\infty} \dist_{X}(S(t,\tau)D, \mathcal{A}(t)) = 0.
        \end{equation*}
    \end{enumerate}
    A pullback attractor $\mathcal{A} = \{ \mathcal{A}(t) \}_{t \in \R}$ is referred to as \textbf{minimal}, if for every family $\widehat{C} = \{C(t)\}_{t\in\R} \in \mathcal{P}(X)$ of closed sets, which satisfies
    \begin{equation*}
        \lim_{\tau \to -\infty} \dist_{X}(S(t,\tau)D, C(t)) = 0
        \qquad\text{for all bounded sets $D$,}
    \end{equation*}
    it holds that $\mathcal{A}(t)\subset C(t)$ for all $t\in\R$.
\end{definition}

\medskip

\begin{remark}\label{REM:MPBA}
    We point out that in the case of autonomous systems, the definition of the global attractor is simply that of a compact invariant and attracting set. 
    Indeed, these two properties are enough to conclude that the global attractor is \textit{minimal closed attracting} (i.e., the smallest closed attracting set) and \textit{maximal invariant} (i.e., the largest invariant set). 
    For non-autonomous systems, these properties become more subtle. Specifically, unlike in autonomous dynamical systems, the global pullback attractor defined here is neither the maximal invariant set attracting all bounded sets nor the minimal closed attracting set. 
    In particular, the definition above does not guarantee the crucial property that the attractor attracts itself, meaning that 
    \begin{align*}
        \lim_{\tau\to-\infty}\dist(S(t,\tau)\mathcal A(\tau),\mathcal A(t))=0
        \quad\text{for all $t\in\R$}.
    \end{align*}
    This explains why the minimality property must be assumed \textit{a priori} in the definition above (leading to the \textit{minimal} pullback attractor), as it does not follow from the other properties. This assumption guarantees the uniqueness of $\mathcal A$.
    In order for the minimal and maximal properties to follow from the other assumptions, the attracting property must hold not just for bounded sets independent of time, but also for time-dependent families of sets belonging to a specific universe. This results in alternative definitions of pullback attractors (see, e.g., \cite{marinrubio2009}), which can be larger than the ones considered in the present paper.
\end{remark}

\medskip

We will use the following proposition to prove the existence of a pullback attractor for the process $S_m^{K,L}$ (see \cite[Theorem 7]{marinrubio2009} and \cite[Proposition 3.7]{garcialuengo2012}).

\begin{proposition}\label{PROP:pullback_att.}
    Let $\{S(t,\tau)\}_{t \geq \tau}$ be a continuous process on a metric space $X$.  
	Assume that there exists a compact attracting family of sets $\widehat K=\{K(t)\}_{t\in\R}$ for $\{S(t,\tau)\}_{t \ge \tau}$, where $K(t)$ is compact in $X$ for every $t\in \R$, such that
    \begin{align*}
        \lim_{\tau \to-\infty}\dist(S(t,\tau)D, K(t))=0,
    \end{align*}
    for any bounded set $D\in \mathcal P(X)$.

	Then there exists a minimal pullback attractor 
	$\mathcal{A} = \{\mathcal{A}(t)\}_{t\in\R}$ in $X$, 
	which is given by
	\begin{align*}
	\mathcal{A}(t) = 
	\overline{\bigcup_{\substack{D\in \mathcal P(X)\\ \text{bounded}\;}} \Lambda({D},t)}^{\,X},
		\end{align*}
	where
	\begin{align*}
    	\Lambda({D},t)
    	= \bigcap_{s \le t} \overline{\bigcup_{\tau \le s} S(t,\tau) D}^{\,X}
    	\qquad \text{for any bounded set ${D} \in \mathcal P(X)$}.
	\end{align*}
    Additionally, if there exists a family 
		$\widehat{D}_0 = \{D_0(t)\}_{t\in\R} \subset \mathcal{P}(X)$ 
		such that $\widehat{D}_0$ is pullback absorbing 
		for $\{S(t,\tau)\}_{t \ge \tau}$ and if ${D}_0(t)\equiv D_0$ for all $t\in \R$, then
    \begin{equation}
        \mathcal{A}(t) = \Lambda(D_0,t) 
        \subset \overline{D_0}^X
        \quad\text{for all $t\in\R$.}\label{bound}
    \end{equation}
\end{proposition}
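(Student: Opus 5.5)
The plan is the standard construction of pullback attractors from pullback $\omega$-limit sets, with the compact attracting family $\widehat K$ supplying all the compactness. Throughout I use the sequential characterization: $y\in\Lambda(D,t)$ if and only if there exist $\tau_n\to-\infty$ and $x_n\in D$ with $S(t,\tau_n)x_n\to y$ in $X$.

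\textbf{Step 1 (properties of $\Lambda(D,t)$).} Fix a bounded $D$ and $t\in\R$, and take any sequence $\tau_n\to-\infty$, $x_n\in D$.
\begin{enumerate}[label=(\alph*), topsep=0.5ex, leftmargin=*]
\item Since $\dist(S(t,\tau_n)D,K(t))\to0$, there are $k_n\in K(t)$ with $d(S(t,\tau_n)x_n,k_n)\to0$. By compactness of $K(t)$, a subsequence of $S(t,\tau_n)x_n$ converges to a point of $K(t)$. Hence $\Lambda(D,t)$ is non-empty, and it is contained in $K(t)$ because $K(t)$ is closed.
\item $\Lambda(D,t)$ is closed by definition, so it is compact as a closed subset of $K(t)$.
\item $\Lambda(D,t)$ pullback attracts $D$. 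If not, there are $\varepsilon>0$, $\tau_n\to-\infty$ and $x_n\in D$ with $\dist(S(t,\tau_n)x_n,\Lambda(D,t))\ge\varepsilon$. By (a) a subsequence converges to a point that, by the sequential characterization, lies in $\Lambda(D,t)$, which is a contradiction.
\item For $t\ge\tau$ we have $S(t,\tau)\Lambda(D,\tau)=\Lambda(D,t)$.
\begin{itemize}
\item For ``$\subset$'': if $y=\lim_n S(\tau,\tau_n)x_n$, then continuity of $S(t,\tau)$ and the process property give $S(t,\tau)y=\lim_n S(t,\tau_n)x_n\in\Lambda(D,t)$.
\item For ``$\supset$'': if $y=\lim_n S(t,\tau_n)x_n$, write $S(t,\tau_n)x_n=S(t,\tau)S(\tau,\tau_n)x_n$. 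By (a) applied at time $\tau$, a subsequence $S(\tau,\tau_n)x_n$ converges to some $z\in\Lambda(D,\tau)$. Continuity then gives $y=S(t,\tau)z$.
\end{itemize}
\end{enumerate}

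\textbf{Step 2 (the family $\mathcal A$).} Define $\mathcal A(t)$ as in the statement.
\begin{itemize}
\item Compactness: each $\Lambda(D,t)\subset K(t)$, so $\mathcal A(t)\subset K(t)$, and $\mathcal A(t)$ is compact as a closed subset of a compact set.
\item Attraction: $\mathcal A(t)\supset\Lambda(D,t)$, so Step 1(c) gives pullback attraction of every bounded $D$.
\item Minimality: let $\widehat C=\{C(t)\}_{t\in\R}$ be closed and pullback attracting, and let $y=\lim_n S(t,\tau_n)x_n\in\Lambda(D,t)$. Then $\dist(S(t,\tau_n)x_n,C(t))\to0$, so $y\in C(t)$ because $C(t)$ is closed. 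Taking the union over $D$ and then the closure gives $\mathcal A(t)\subset C(t)$.
\end{itemize}

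\textbf{Step 3 (invariance).} I expect this to be the main subtlety, because $\mathcal A(t)$ is the closure of a possibly uncountable union.
\begin{itemize}
\item Inclusion $S(t,\tau)\mathcal A(\tau)\subset\mathcal A(t)$: by continuity, $S(t,\tau)$ maps the closure of $\bigcup_D\Lambda(D,\tau)$ into the closure of $\bigcup_D S(t,\tau)\Lambda(D,\tau)$. By Step 1(d) the latter union is $\bigcup_D\Lambda(D,t)$, whose closure is $\mathcal A(t)$.
\item Inclusion $\mathcal A(t)\subset S(t,\tau)\mathcal A(\tau)$: the set $S(t,\tau)\mathcal A(\tau)$ is compact, hence closed, as the continuous image of a compact set. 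It contains $\bigcup_D S(t,\tau)\Lambda(D,\tau)=\bigcup_D\Lambda(D,t)$ by Step 1(d), and therefore it contains the closure of that union, which is $\mathcal A(t)$.
\end{itemize}
Here compactness of $K(\tau)$ is used essentially.

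\textbf{Step 4 (the absorbing case).} Assume $\widehat D_0$ is pullback absorbing with $D_0(t)\equiv D_0$, and take $D_0$ bounded, as is implicit in the statement.
\begin{itemize}
\item Every bounded $D$ satisfies $\Lambda(D,t)\subset\Lambda(D_0,t)$. Let $y=\lim_n S(t,\tau_n)x_n$ and choose $s_n\to-\infty$ so slowly that $\tau_n\le\tau_0(s_n,D)$. Then $S(s_n,\tau_n)x_n\in D_0$, and $S(t,\tau_n)x_n=S(t,s_n)z_n$ with $z_n\in D_0$. Thus $y\in\Lambda(D_0,t)$.
\item Consequently $\mathcal A(t)=\Lambda(D_0,t)$, since $\Lambda(D_0,t)$ is closed and is itself one of the sets in the union.
\item Finally, $D_0$ absorbs itself, so $S(t,\tau)D_0\subset D_0$ for all $\tau\le\tau_0(t,D_0)$. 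Hence every limit point defining $\Lambda(D_0,t)$ lies in $\overline{D_0}^X$, which gives \eqref{bound}.
\end{itemize}
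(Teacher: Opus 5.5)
Your proof is correct. The paper gives no argument of its own for this proposition and simply cites \cite[Theorem 7]{marinrubio2009} and \cite[Proposition 3.7]{garcialuengo2012}; your construction through the pullback $\omega$-limit sets $\Lambda(D,t)$, with the compact family $\widehat K$ supplying the asymptotic compactness, is essentially the standard argument behind those references. The boundedness of $D_0$, which you make explicit, is genuinely needed for $\mathcal A(t)=\Lambda(D_0,t)$. For instance, with $X=\R$, $S(t,\tau)x=\mathrm{e}^{-(t-\tau)}x$ and $D_0=X$, one gets $\Lambda(X,t)=\R$ while $\mathcal A(t)=\{0\}$. In the paper's application this assumption holds because $D_0$ is compact.
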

\begin{remark}
    Notice that if the pullback absorbing set is time-independent and $\mathcal A$ exists, then \eqref{bound} shows that the system's dissipative properties suffice to ensure that the pullback attractor $\{\mathcal A(t)\}_{t\in \R}$ is uniformly bounded in time, which is not guaranteed in general.
\end{remark}

\subsubsection{Existence of a pullback absorbing set}

We start by proving the existence of a pullback $\mathcal{D}^\mathcal{X}$-absorbing family of sets for the process $\{S^{K,L}_m(t,\tau)\}_{t\geq\tau}$.

\begin{proposition}\label{Prop:PBAbsorbing}
    Suppose that \ref{ASSUMP:1}--\ref{ASSUMP:2}, \ref{S1}--\ref{S3}, \ref{D1}--\ref{D4} are fulfilled.
    
    Then there is a family $\widehat{D}_0=\{D_0(t)\}_{t\in\R}$ of compact sets in $\mathcal{X}_{K,L,m}$ that is pullback absorbing for the process $\{S^{K,L}_m(t,\tau)\}_{t\geq\tau}$ associated with system \eqref{conCH*} with pullback absorption time $\tau_0({D},t) \le t-2$ for all bounded sets ${D}$ and $t\in\R$. More precisely, the family $\widehat D_0$ is constant in time, and there exists a compact set $D_0$ such that $D_0(t)\equiv D_0$ for any $t\in \R$.
\end{proposition}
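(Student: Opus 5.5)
The plan is to derive bounds that are uniform in the initial time $\tau$ and in the bounded set of initial data. The key input is \ref{D2}: $(\bv,\bw)\in L^2(\R;\LL^2)$, so the total velocity contribution $V\coloneqq\norm{(\bv,\bw)}_{L^2(\R;\LL^2)}^2$ is finite. Let $D\subset\mathcal{X}_{K,L,m}$ be bounded in the metric $\mathrm d_{\mathcal{X}_{K,L,m}}$. Then $\norm{(\phi_\tau,\psi_\tau)}_{\HH^1}$ and $\intO F_1(\phi_\tau)\dx+\intG G_1(\psi_\tau)\dG$ are bounded on $D$. Since $|\phi_\tau|,|\psi_\tau|\le1$ and $F_2,G_2$ are continuous, this gives $E_{\mathrm{free}}(\phi_\tau,\psi_\tau)\le E_D$ for some $E_D$ independent of $\tau$.

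\textbf{Step 1.} Lemma~\ref{LEM:UNIFORMBOUNDEDNESS} gives, for all $t\ge\tau$,
\[
E_{\mathrm{free}}(\phi(t),\psi(t))\le E_D+\tfrac12 V,\qquad \int_\tau^t\norm{(\mu,\theta)}_{L,\beta}^2\ds+\int_\tau^t\norm{(\delt\phi,\delt\psi)}_{L,\beta,*}^2\ds\le C(E_D+V).
\]
These bounds do not depend on $t-\tau$. They do depend on $E_D$, however, so uniformity in $D$ is lost here. Pullback absorption only needs a single set $D_0$ that eventually contains the orbits of every $D$, so I would remove the $D$-dependence as follows. Since $\int_{-\infty}^\infty\norm{(\bv,\bw)}^2_{\LL^2}<\infty$, one expects dissipation to make $E_{\mathrm{free}}$ effectively decreasing up to small errors. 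The most direct route is to reuse the Gronwall-type argument of Lemma~\ref{Lemma:Gronwall:2}. I would test \eqref{WF:1} with $\mathcal S_{L,\beta}(\phi-\bar m,\psi-\bar m')$, where $\bar m,\bar m'$ are the constant means, and combine this with the energy inequality. The convexity of $F_1,G_1$ together with the Poincaré inequality (Lemma~\ref{lemma:poinc}) and the bound $|\phi|,|\psi|\le1$ should give a differential inequality for $y=E_{\mathrm{free}}+c\norm{(\phi-\bar m,\psi-\bar m')}_{L,\beta,*}^2+C_0$ of the form
\[
y'+\gamma y\le C\big(1+\norm{(\bv,\bw)}_{\LL^2}^2\big).
\]
The standard fact $(\phi-m)F_1'(\phi)\ge cF_1(\phi)-C$, which holds because the mean lies strictly inside $(-1,1)$ by \ref{D1}, gives the $\gamma y$ term. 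Lemma~\ref{Lemma:Gronwall:2} then yields $y(t)\le 2y(\tau)e^{-\gamma(t-\tau)}+Q$. Hence for $\tau\le\tau_1(t,D)$, with $\tau_1(t,D)\le t-2$, the energy $E_{\mathrm{free}}(\phi(t-2),\psi(t-2))$ is bounded by $2Q+1$, a constant independent of $D$. Making this dissipative estimate rigorous at the level of weak solutions, for instance through the Yosida approximation as in Case~2 of Theorem~\ref{thm:highreg}, is the step I expect to be the main obstacle.

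\textbf{Step 2.} Restart at time $s=t-2$ with energy at most $R\coloneqq 2Q+1$, and repeat the uniform Gronwall argument from the proof of Theorem~\ref{thm:highreg} on $[t-2,t]$. Its constants depend only on the initial energy and on the uniformly local norms of $(\bv,\bw)$, including \ref{D3} and \ref{D4}. This gives $\norm{(\mu(t),\theta(t))}_{L,\beta}\le C_R$ and $\norm{(\phi(t),\psi(t))}_{\mathcal W^{2,6}}+\norm{(F'(\phi(t)),G'(\psi(t)))}_{\LL^6}\le C_R$. The elliptic regularity of \cite{Knopf2025}, which uses the assumption $\alpha=\beta$ from \ref{D4} when $K=0$, turns the bound on $\mu,\theta$ into these bounds. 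Define
\[
D_0\coloneqq\Big\{(\phi,\psi)\in\mathcal X_{K,L,m}:\ \norm{(\phi,\psi)}_{\mathcal W^{2,6}}\le C_R,\ \norm{(F'(\phi),G'(\psi))}_{\LL^6}\le C_R\Big\},
\]
which is independent of $t$. Then $S^{K,L}_m(t,\tau)D\subset D_0$ for all $\tau\le\tau_0(D,t)\coloneqq\tau_1(t,D)\le t-2$.

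\textbf{Step 3.} To show $D_0$ is compact in $\mathcal X_{K,L,m}$, take a sequence in $D_0$. By the compact embedding $\mathcal W^{2,6}\emb\HH^1$, a subsequence converges in $\HH^1$ and uniformly. The uniform $\LL^6$ bound on $F_1'$ and lower semicontinuity show that the limit satisfies $|\phi|<1$ a.e. and again lies in $D_0$. The conditions $F_1'(\phi_n)\rightharpoonup F_1'(\phi)$ and the mass constraint pass to the limit. Finally, $F_1(\phi_n)\to F_1(\phi)$ in $L^1$ by Vitali's theorem, since $F_1$ is continuous on $[-1,1]$ and therefore bounded. This gives convergence in $\mathrm d_{\mathcal X_{K,L,m}}$, so $D_0$ is compact.
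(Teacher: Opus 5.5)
Your proof is correct and has the same architecture as the paper's. You derive a dissipative inequality for $E_{\mathrm{free}}(\phi,\psi)+\frac12\|(\phi^*,\psi^*)\|_{L,\beta,*}^2$ from the energy identity, testing with $\mathcal S_{L,\beta}(\phi^*,\psi^*)$ and convexity of $F_1,G_1$. A Gronwall step then gives a bound independent of $D$ after a long enough pullback time. Finally, the uniform-Gronwall regularization of Theorem~\ref{thm:highreg} on a bounded time window before $t$ lands the orbit in a compact set.

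The genuine difference is how the convective terms are handled. The paper bounds them through $\|(\bv,\bw)\|_{\mathcal L^6}$ and Gagliardo--Nirenberg, after moving the derivative onto $\phi$ in the $\mathcal S$-term. This produces the term $C\|(\bv,\bw)\|_{\mathcal L^6}^2\Upsilon^{1/2}$ and requires the sublinear Gronwall Lemma~\ref{Lemma:Gronwall:2} together with the $L^2_{\mathrm{uloc}}(\R;\mathbf L^6(\Omega)\times\mathbf L^6(\Gamma))$ part of \ref{D4}. You use only $|\phi|,|\psi|\le 1$ and \ref{D2}:
\begin{itemize}
\item the energy term is at most $\frac12\|(\mu,\theta)\|_{L,\beta}^2+\frac12\|(\bv,\bw)\|_{\LL^2}^2$;
\item the $\mathcal S$-term is at most $\|(\bv,\bw)\|_{\LL^2}\|(\phi^*,\psi^*)\|_{L,\beta,*}\le C\|(\bv,\bw)\|_{\LL^2}$.
\end{itemize}
So you get $y'+y\le C(1+\|(\bv,\bw)\|_{\LL^2}^2)$, and the ordinary Gronwall lemma suffices. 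Your route therefore shows that the $\mathbf L^6$-integrability in \ref{D4} is not needed for this proposition.

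Restarting at $t-2$ from a uniform energy bound is only a repackaging of the paper's two-stage construction. There, the $\mathcal H^1$-ball $\tilde D_0$ is absorbed into itself and then into an $\mathcal H^2$-ball. Your $\mathcal W^{2,6}$-based set $D_0$, with the explicit compactness check in $\mathrm d_{\mathcal X_{K,L,m}}$, is more carefully justified than the paper's remark that $D_0$ is ``of course compact''.

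One correction concerns where $\alpha=\beta$ (for $K=0$) in \ref{D4} is used. It plays no role in the elliptic regularity of \cite{Knopf2025}; Theorem~\ref{thm:highreg} does not assume \ref{D4}. It is needed in your Step~1. To produce $\|(\phi,\psi)\|_{K,\alpha}^2$ and $\intO F_1'(\phi)\phi^*\dx$, you must test \eqref{WF:2} with $(\phi^*,\psi^*)=(\phi-\beta c,\psi-c)$, where $c$ is the generalized mean. For $K=0$ this pair lies in $\mathcal H^1_{0,\alpha}$ only if $\alpha c=\beta c$.
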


\begin{proof}
    In this proof, let $C$ denote a generic positive constant depending only on the prescribed velocity fields, the quantities specified in \ref{ASSUMP:1} and \ref{ASSUMP:2}, $F$ and $G$, which may change its value from line to line.
    We test $\big(\eqref{conCH*:1},\eqref{conCH*:3}\big)$ with $(\mu,\theta)$ and $\big(\eqref{conCH*:2},\eqref{conCH*:4}\big)$ with $-(\delt\phi,\delt\psi)$. Performing integration by parts and adding the resulting equations leads to
    \begin{align}\label{PB:ddt:Energy}
        &\ddt E_{\mathrm{free}}(\phi,\psi) + \norm{(\mu,\theta)}_{L,\beta}^2 = \intO\phi\bv\cdot\Grad\mu\dx + \intG \psi\bw\cdot\Gradg\theta\dG.
    \end{align}
    Using the Gagliardo-Nirenberg's inequalities and the bulk-surface Poincar\'{e} inequality, we find that
    \begin{align*}
        &\Bigabs{\intO\phi\bv\cdot\Grad\mu\dx + \intG \psi\bw\cdot\Gradg\theta\dG} \\
        &\quad\leq \norm{(\mu,\theta)}_{L,\beta}\norm{(\bv,\bw)}_{\mathcal{L}^6}\norm{(\phi,\psi)}_{\mathcal{L}^3} \\
        &\quad\leq C\norm{(\mu,\theta)}_{L,\beta}\norm{(\bv,\bw)}_{\mathcal{L}^6}\norm{(\phi,\psi)}_{\mathcal{L}^2}^{\frac12}\norm{(\phi,\psi)}_{\mathcal{H}^1}^{\frac12} \\
        &\quad\leq\frac12\norm{(\mu,\theta)}_{L,\beta}^2 + C\big(1 + \norm{(\bv,\bw)}_{\mathcal{L}^6}^2\big)\norm{(\phi,\psi)}_{K,\alpha} \,.
    \end{align*}
    Thus, we have
    \begin{align}\label{PB:ddt:Energy:1}
        \ddt E_{\mathrm{free}}(\phi,\psi) + \frac12\norm{(\mu,\theta)}_{L,\beta}^2 \leq C\big(1 + \norm{(\bv,\bw)}_{\mathcal{L}^6}^2\big)\norm{(\phi,\psi)}_{K,\alpha} \,.
    \end{align}
    For the following computations, we assume that $L\in[0,\infty)$. The case $L = \infty$ can be handled similarly. Setting $(\phi^\ast,\psi^\ast) = (\phi - \beta\mean{\phi_\tau}{\psi_\tau}, \psi - \mean{\phi_\tau}{\psi_\tau})$, we have   
    \begin{align}\label{PB:ddt:Lb*}
        &\ddt\frac12\norm{(\phi^\ast,\psi^\ast)}_{L,\beta,\ast}^2 
            = \ddt\frac12\norm{\mathcal{S}_{L,\beta}(\phi^\ast,\psi^\ast)}_{L,\beta}^2 
        \nonumber\\
        &\quad = \big(\mathcal{S}_{L,\beta}(\delt\phi^\ast,\delt\psi^\ast),\mathcal{S}_{L,\beta}(\phi^\ast,\psi^\ast)\big)_{L,\beta} 
        \nonumber\\
        &\quad = - \big\langle (\delt\phi^\ast,\delt\psi^\ast),\mathcal{S}_{L,\beta}(\phi^\ast,\psi^\ast)\big\rangle_{\mathcal{H}^1_{L,\beta}} 
        \\
        &\quad =  \big((\mu,\theta),\mathcal{S}_{L,\beta}(\phi^\ast,\psi^\ast)\big)_{L,\beta} 
            - \big(\phi\bv,\Grad\mathcal{S}^\Omega_{L,\beta}(\phi^\ast,\psi^\ast)\big)_{\mathbf{L}^2(\Omega)}
            - \big(\psi\bw,\Gradg\mathcal{S}^\Gamma_{L,\beta}(\phi^\ast,\psi^\ast)\big)_{\mathbf{L}^2(\Gamma)}
        \nonumber\\
        &\quad = - \big((\mu,\theta),(\phi^\ast,\psi^\ast)\big)_{\mathcal{L}^2} 
            - \big(\phi\bv,\Grad\mathcal{S}^\Omega_{L,\beta}(\phi^\ast,\psi^\ast)\big)_{\mathbf{L}^2(\Omega)}
            - \big(\psi\bw,\Gradg\mathcal{S}^\Gamma_{L,\beta}(\phi^\ast,\psi^\ast)\big)_{\mathbf{L}^2(\Gamma)}.
        \nonumber    
    \end{align}
    Then, using the weak formulation \eqref{WF:2}, we have
    \begin{align}\label{PB:MT:PP}
        \begin{split}
            - \big((\mu,\theta),(\phi^\ast,\psi^\ast)\big)_{\mathcal{L}^2} &= - \norm{(\phi,\psi)}_{K,\alpha}^2 - \intO F^\prime(\phi)\phi^\ast\dx - \intG G^\prime(\psi)\psi^\ast\dG \\
            &\quad + \mathbf{1}_{\{0\}}(K)(\alpha - \beta)\mean{\phi_\tau}{\psi_\tau}\intG (\alpha\psi - \phi)\dG,
        \end{split}
    \end{align}
    where $\mathbf{1}_{\{0\}}(\cdot)$ denotes the characteristic function of the set $\{0\}$, which is added to deal with all the cases $K\in[0,\infty]$ simultaneously. 
    In view of assumption \ref{D4}, where $\alpha = \beta$ if $K = 0$ was demanded, we readily see that the last term on the right-hand side of \eqref{PB:MT:PP} vanishes.
    Now, exploiting the convexity of $F_1$ and $G_1$, respectively, as well as the Lipschitz continuity of $F_2^\prime$ and $G_2^\prime$, respectively, we find that
    \begin{align}\label{PB:Pot:1}
        \begin{split}
            &\intO F_1^\prime(\phi)\big(\phi - \beta\mean{\phi_\tau}{\psi_\tau}\big)\dx + \intG G_1^\prime(\psi)\big(\psi - \mean{\phi_\tau}{\psi_\tau}\big)\dG 
            \\
            &\quad\geq \intO F_1(\phi) - F_1(\beta\mean{\phi_\tau}{\psi_\tau})\dx + \intG G_1(\psi) - G_1(\mean{\phi_\tau}{\psi_\tau})\dG 
            \\
            &\quad\geq \intO F_1(\phi)\dx + \intG G_1(\psi)\dG - \abs{\Om}F_1(\beta\mean{\phi_\tau}{\psi_\tau}) - \abs{\Ga}G_1(\mean{\phi_\tau}{\psi_\tau}),
        \end{split}
    \end{align}
    and
    \begin{align}\label{PB:Pot:2}
        \intO F_2(\phi)\phi^\ast\dx + \intG G_2(\psi)\psi^\ast\dx \leq C.
    \end{align}
    For the advection-related terms on the right-hand side of \eqref{PB:ddt:Lb*}, we again use Gagliardo--Nirenberg's inequalities to obtain
    \begin{align}\label{PB:Conv}
        \begin{split}
            &\Bigabs{\intO\phi\bv\cdot\Grad\mathcal{S}_{L,\beta}^\Om(\phi^\ast,\psi^\ast)\dx 
                + \intG \psi\bw\cdot\Gradg\mathcal{S}_{L,\beta}^\Ga(\phi^\ast,\psi^\ast)\dG} 
            \\
            &\quad= \Bigabs{\intO \mathcal{S}_{L,\beta}^\Om(\phi^\ast,\psi^\ast)\bv\cdot\Grad\phi\dx + \intG \mathcal{S}_{L,\beta}^\Ga(\phi^\ast,\psi^\ast)\bw\cdot\Gradg\psi\dG} 
            \\
            &\quad\leq \norm{\mathcal{S}_{L,\beta}(\phi^\ast,\psi^\ast)}_{\mathcal{L}^3}\norm{(\bv,\bw)}_{\mathcal{L}^6}\norm{(\phi,\psi)}_{K,\alpha} 
            \\
            &\quad\leq C\norm{\mathcal{S}_{L,\beta}(\phi^\ast,\psi^\ast)}_{\mathcal{L}^2}^{\frac12}\norm{(\phi^\ast,\psi^\ast)}_{L,\beta,\ast}^{\frac12}\norm{(\bv,\bw)}_{\mathcal{L}^6}\norm{(\phi,\psi)}_{K,\alpha} 
            \\
            &\quad\leq\frac12\norm{(\phi,\psi)}_{K,\alpha}^2 + C\norm{(\bv,\bw)}_{\mathcal{L}^6}^2\norm{(\phi^\ast,\psi^\ast)}_{L,\beta,\ast}.
        \end{split}
    \end{align}
    Thus, collecting the estimates \eqref{PB:Pot:1}-\eqref{PB:Conv} to bound the right-hand side of \eqref{PB:ddt:Lb*}, we deduce
    \begin{align}\label{PB:ddt:Lb*:1}
        \begin{split}
            &\ddt\frac12\norm{(\phi^\ast,\psi^\ast)}_{L,\beta,\ast}^2 + \frac12\norm{(\phi,\psi)}_{K,\alpha}^2 + \intO F_1(\phi)\dx + \intG G_1(\psi)\dG \\
            &\quad\leq C + C\norm{(\bv,\bw)}_{\mathcal{L}^6}^2\norm{(\phi^\ast,\psi^\ast)}_{L,\beta,\ast} \,.
        \end{split}
    \end{align}
    Consequently, adding \eqref{PB:ddt:Energy:1} and \eqref{PB:ddt:Lb*:1}, and again using the fact that $F_2^\prime$ and $G_2^\prime$ are both Lipschitz continuous, leads to
    \begin{align}\label{PB:DiffIneq:0}
        \begin{split}
            &\ddt \Big(E_{\mathrm{free}}(\phi,\psi) + \frac12\norm{(\phi^\ast,\psi^\ast)}_{L,\beta,\ast}^2\Big) + E_{\mathrm{free}}(\phi,\psi) + \frac12\norm{(\mu,\theta)}_{L,\beta}^2 + \frac12\norm{(\phi,\psi)}_{K,\alpha}^2 \\
            &\quad\leq C + C\norm{(\bv,\bw)}_{\mathcal{L}^6}^2\Big(\norm{(\phi,\psi)}_{K,\alpha} + \norm{(\phi^\ast,\psi^\ast)}_{L,\beta,\ast}\Big).
        \end{split}
    \end{align}
    Recalling the definition of $E_{\mathrm{free}}$ and the fact that $F$ and $G$ are bounded from below on $[-1,1]$, it easy to see that there exist constants $C^\prime$ and $C^{\prime\prime}$ (which may depend on the same quantities as $C$) such that
    \begin{align}
        \label{EST:ABS:1}
        \begin{split}
        \frac12\norm{(\phi,\psi)}_{K,\alpha}^2 + \frac12\norm{(\phi^\ast,\psi^\ast)}_{L,\beta,\ast}^2 - C^\prime &\leq E_{\mathrm{free}}(\phi,\psi) + \frac12\norm{(\phi^\ast,\psi^\ast)}_{L,\beta,\ast}^2 
        \\
        & \leq \frac12\norm{(\phi,\psi)}_{K,\alpha}^2 + \frac12\norm{(\phi^\ast,\psi^\ast)}_{L,\beta,\ast}^2 + C^{\prime\prime}.
        \end{split}
    \end{align}
    Therefore, defining $\Upsilon(t) \coloneqq C^\prime + E_{\mathrm{free}}(\phi(t),\psi(t)) + \frac12\norm{(\phi^\ast(t),\psi^\ast(t))}_{L,\beta,\ast}^2$, we readily infer from Minkowski's inequality that
    \begin{align*}
        \Upsilon^{\frac12}(t)\geq\frac12\big(\norm{(\phi(t),\psi(t))}_{K,\alpha} + \norm{(\phi^\ast(t),\psi^\ast(t))}_{L,\beta,\ast}\big).
    \end{align*}
    Thus, we obtain from \eqref{PB:DiffIneq:0} the differential inequality
    \begin{align}\label{PB:DiffIneq}
        \ddt\Upsilon(t) + \Upsilon(t) + \frac12\norm{(\mu(t),\theta(t))}_{L,\beta}^2 \leq C + C\norm{(\bv(t),\bw(t))}_{\mathcal{L}^6}^2\Upsilon^{\frac12}(t)
    \end{align}
    for all $t\geq\tau$.
    Hence, applying the variant of Gronwall's inequality presented in Lemma~\ref{Lemma:Gronwall:2}, we infer that
    \begin{align}
        \label{EST:ABS:1.5}
        \Upsilon(t) 
        &\leq 2 \Upsilon(\tau) \mathrm{e}^{-\frac{1}{2}(t-\tau)}
            + \Big(
                C\norm{(\bv,\bw)}_{L^2_\mathrm{uloc}(\R;\mathbf{L}^6(\Omega)\times \mathbf{L}^6(\Gamma))}^2
            \Big)^2
            + C
    \end{align}
    for all $t\geq\tau$.
    Now, we recall that $(\phi(s),\psi(s)) \in \mathcal{X}_{K,L,m}$ for all $s\ge \tau$. If $K\in [0,\infty)$, we use the bulk-surface Poincar\'e inequality (Lemma~\ref{lemma:poinc}) to deduce that
    \begin{align}
        \label{EST:ABS:2}
        \norm{(\phi(s),\psi(s))}_{\mathcal{H}^1} 
        \le C_1 \norm{(\phi(s),\psi(s))}_{K,\alpha}
        \quad\text{for all $s\ge \tau$,}
    \end{align}
    where $C_1$ is a positive constant depending only on $K$, $\alpha$, $\beta$ and $\Omega$.
    If $K=\infty$, we use the same argumentation as for \eqref{es01*} to get
    \begin{align*}
        \norm{(\phi(s),\psi(s))}_{\mathcal{H}^1} \leq C_1\big(\norm{(\phi(s),\psi(s))}_{K,\alpha} + \norm{(\phi(s),\psi(s))}_{L,\beta,\ast}\big) \qquad\text{for all }s\geq\tau.
    \end{align*}
    This allows us to conclude that
    \begin{align}\label{PB:Gronwall:Final}
        \norm{(\phi(t),\psi(t))}_{\mathcal{H}^1}^2 \leq C_*\big(1 + \norm{(\phi_\tau,\psi_\tau)}_{\mathcal{H}^1}\big)\mathrm{e}^{\frac 12 (\tau-t)} + C_{**}
    \end{align}
    for all $t\geq\tau$, where $C_*$ and $C_{**}$ are positive constants which may depend on the same quantities as constants denoted by $C$. In particular, this means that the constant $C_*$ and $C_{**}$ are independent of $\tau$ and $(\phi_\tau,\psi_\tau)$.

    Now, let $t\in\R$ and $D\subset \mathcal{X}_{K,L,m}$ be bounded. Thus, there exists $R=R(D)>0$ such that 
    \begin{align*}
        \bignorm{\big(\tilde\phi,\tilde\psi\big)}_{\mathcal{H}^1} \le R
        \quad\text{for all $\big(\tilde\phi,\tilde\psi\big)\in D$.}
    \end{align*}
    We define
    \begin{align*}
        \tau_0({D},t) \coloneqq t - 2(1+R) \le t - 2.
    \end{align*}
    By this definition, for any $\tau \le \tau_0({D},t)$, we have $\mathrm{e}^{\frac 12 (\tau-t)} \le (1+R)^{-1}$. In view of the estimates \eqref{EST:ABS:1.5}--\eqref{PB:Gronwall:Final}, this implies that
    \begin{align}
        \label{EST:ABSORB}
        \norm{(\phi(t),\psi(t))}_{\mathcal{H}^1}^2 
        \leq C_* + C_{**}
        \quad\text{and}\quad
        \Upsilon(t) \le C
    \end{align}
    for all $\tau \le \tau_0({D},t)$ and initial datum $(\phi_\tau,\psi_\tau) \in D$. 
    Now, we set $\rho \coloneqq C_* + C_{**}$,
    \begin{align*}
        \tilde D_0 \coloneqq 
            \Big\{
                \big(\tilde\phi,\tilde\psi\big) 
                \in \mathcal{X}_{K,L,m}
                \,:\, \bignorm{\big(\tilde\phi,\tilde\psi\big)}_{\mathcal{H}^1}^2 \le \rho 
            \Big\}.
    \end{align*}
    Then, for any $\tau \le \tau_0({D},t)$, it follows from \eqref{EST:ABSORB} that
    \begin{align}
        \label{EST:ABSORB:0}
        S^{K,L}_m(t,\tau)D\subset \tilde D_0 
        \qquad\text{for all~}\tau \leq \tau_0(D, t).
    \end{align}
    Since $D$ was assumed to be an arbitrary bounded subset of the phase space, this shows that the (constant in time) family $\widehat{ {D}}_0:=\{\tilde D_0(t)\equiv \tilde D_0\}_{t\in\R}$ is pullback absorbing for the process 
    $\{S^{K,L}_m(t,\tau)\}_{t\ge \tau}$. We now aim at showing that this set can be improved to be a compact set of the phase space.
    To this end, given $t\in \R$, let us consider $\tau_0(t):=\tau_0(\tilde D_0,t)\leq t-2$ as to be the pullback absorbing time of $\tilde D_0$ into itself. Let now $r\in[t-1,t]$, $t\in \R$, be arbitrary.
    After integrating \eqref{PB:DiffIneq:0} in time from $r-1$ to $r$, we use \eqref{EST:ABSORB} along with the definition of $\rho$ to obtain, for any $\tau\leq \tau_0(t)$ and initial datum $(\phi_\tau,\psi_\tau)\in\tilde  D_0$,
    \begin{align*}
        &\int_{r-1}^r \norm{(\mu,\theta)}_{L,\beta}^2 \ds 
        \\
        &\quad\leq C + E_{\mathrm{free}}(\phi(r-1),\psi(r-1)) + \frac12\norm{(\phi^\ast(r-1),\psi^\ast(r-1))}_{L,\beta,\ast}^2 \\
        &\qquad + C\int_{r-1}^r\norm{(\bv(s),\bw(s))}_{\mathcal{L}^6}^2\Big(\norm{(\phi(s),\psi(s))}_{K,\alpha} + \norm{(\phi^\ast(s),\psi^\ast(s))}_{L,\beta,\ast}\Big)\ds 
        \\[1ex]
        &\quad\leq C + C\rho^2 + C\rho\, \norm{(\bv,\bw)}_{L^2_\mathrm{uloc}(\R;\mathbf{L}^6(\Omega)\times \mathbf{L}^6(\Gamma))}^2
        \eqqcolon \rho^\prime\,.
    \end{align*}
    Taking the supremum over all $r\in[t-1,t]$ yields
    \begin{align}
        \label{PB:MT:LB:UniformIC}
        \sup_{r\in[t-1,t]}\int_{r-1}^r \norm{(\mu,\theta)}_{L,\beta}^2 \ds &\leq \rho^\prime.
    \end{align}
 Now, using the estimates  \eqref{PB:PP:LB:UniformIC} and \eqref{EST:ABSORB} instead of the energy estimates and Lemma~\ref{LEM:UNIFORMBOUNDEDNESS}, we can follow the argument of the proof of Theorem~\ref{thm:highreg} to verify that,  for any $\tau\leq \tau_0(t)$ and initial datum $(\phi_\tau,\psi_\tau)\in \tilde D_0$, 
   \begin{align}
        \label{PB:PP:LB:UniformIC}
        \sup_{r\in[t-2,t]}\norm{(\phi(r),\psi(r))}_{\mathcal{H}^2}^2 + \sup_{r\in[t-1,t]}\int_{r-1}^r \norm{(\delt\phi,\delt\psi)}_{\mathcal{L}^2}^2\ds 
        &\leq \rho^{\prime\prime},
    \end{align}
    where $\rho''>0$ is a constant with the same dependencies of $\rho'$.
We omit the full details for the sake of brevity. We can then introduce the compact set
 \begin{align*}
        D_0 \coloneqq 
            \Big\{
                \big(\tilde\phi,\tilde\psi\big) 
                \in \mathcal{X}_{K,L,m}
                \,:\, \bignorm{\big(\tilde\phi,\tilde\psi\big)}_{\mathcal{H}^2}^2 \le \rho''
            \Big\}.
    \end{align*}
From \eqref{PB:PP:LB:UniformIC} we thus conclude that 
 \begin{align}
        \label{EST:ABSORB:1b}
        S^{K,L}_m(t,\tau)\tilde D_0\subset  D_0 
        \qquad\text{for all~}\tau \leq \tau_0(t).
    \end{align}
 As a consequence, since $\tilde D_0$ is a pullback absorbing set for the dynamical system, also $D_0$, which is of course compact in $\mathcal{X}_{K,L,m}$, is pullback absorbing, concluding the proof.  
\end{proof}

\subsubsection{Existence of a minimal pullback attractor}

We are now in a position to straightforwardly prove the following theorem, which establishes the existence of a minimal pullback attractor for the processes $\{S^{K,L}_m(t,\tau)\}_{t\geq\tau}$.
\begin{theorem}
    Suppose that \ref{ASSUMP:1}--\ref{ASSUMP:2}, \ref{S1}--\ref{S3}, \ref{D1}--\ref{D4} are fulfilled.
    
    Then the continuous process $\{S^{K,L}_m(t,\tau)\}_{t\geq\tau}$ associated with system \eqref{conCH*} admits a minimal pullback attractor $\mathcal{A}$ in the sense of Definition~\ref{Definition:PullbackAttractor}, which is given by
    \begin{equation}
        \label{REP:PBA}
        \mathcal{A}(t) = \Lambda(D_0,t)
        = \bigcap_{s\leq t}\overline{\bigcup_{\tau\leq s}  
            S^{K,L}_m(t,\tau)D_0}^{\mathcal{X}_{K,L,m}}
        \subset \overline{D_0}^{\mathcal{X}_{K,L,m}}
        \quad\text{for all $t\in\R$,}
    \end{equation}
    where $\widehat{D}_0 = \{D_0(t) \equiv D_0\}_{t\in\R}$ is the compact pullback absorbing set of Proposition~\ref{Prop:PBAbsorbing}.
\end{theorem}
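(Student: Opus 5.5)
The plan is to apply Proposition~\ref{PROP:pullback_att.} with $X = \mathcal{X}_{K,L,m}$, equipped with the metric $\mathrm{d}_{\mathcal{X}_{K,L,m}}$, and with the constant family $K(t) \coloneqq \overline{D_0}^{\mathcal{X}_{K,L,m}}$, where $D_0$ is the absorbing set from Proposition~\ref{Prop:PBAbsorbing}. Lemma~\ref{THM:PROC} already shows that $\{S^{K,L}_m(t,\tau)\}_{t\ge\tau}$ is a continuous process on this space. It therefore remains to check two hypotheses: that $K(t)$ is compact in $X$, and that it pullback attracts every bounded set.

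\textbf{Attraction.} This follows directly from absorption. Let $D\subset\mathcal{X}_{K,L,m}$ be bounded and $t\in\R$. By Proposition~\ref{Prop:PBAbsorbing},
\[
S^{K,L}_m(t,\tau)D\subset D_0
\qquad\text{for all } \tau\le\tau_0(D,t).
\]
Hence $\dist_X(S^{K,L}_m(t,\tau)D,K(t))=0$ for all such $\tau$, and the limit as $\tau\to-\infty$ is zero. Moreover, $\widehat D_0$ is a time-independent pullback absorbing family, so the second part of Proposition~\ref{PROP:pullback_att.} yields the representation \eqref{REP:PBA} together with the inclusion $\mathcal{A}(t)\subset\overline{D_0}^{\mathcal{X}_{K,L,m}}$.

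\textbf{Compactness.} This is the step needing care, because the metric on $\mathcal{X}_{K,L,m}$ is not just the $\mathcal{H}^1$ norm: it also contains the terms $\bigl|\int_\Omega F_1(\phi)\dx - \int_\Omega F_1(\zeta)\dx\bigr|$ and their boundary analogues. I would argue by sequences. Take any sequence $(\phi_n,\psi_n)\subset D_0$. It is bounded in $\mathcal{H}^2$, which embeds compactly into $\mathcal{H}^1$ and into $C(\overline\Omega)\times C(\Gamma)$ since $d\le 3$. So a subsequence converges in $\mathcal{H}^1$ and uniformly to some $(\phi,\psi)$. The following properties pass to the limit:
\begin{itemize}
\item the bound $\norm{(\phi,\psi)}_{\mathcal{H}^2}^2\le\rho''$, by weak lower semicontinuity;
\item the constraints $|\phi|\le1$ and $|\psi|\le1$;
\item the mass constraint and the trace relation defining $\mathcal{H}^1_{K,\alpha}$, since these are $\mathcal{H}^1$-continuous.
\end{itemize}
Finally, $F_1\in C([-1,1])$ and $G_1\in C([-1,1])$ are uniformly continuous, and uniform convergence with values in $[-1,1]$ gives $\int_\Omega F_1(\phi_n)\dx\to\int_\Omega F_1(\phi)\dx$, and likewise for $G_1$. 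Hence the convergence also holds in $\mathrm{d}_{\mathcal{X}_{K,L,m}}$.

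This shows that $D_0$ is sequentially compact, hence closed and compact in the complete metric space $X$. In particular $\overline{D_0}^{\mathcal{X}_{K,L,m}}=D_0$ and $K(t)$ is compact. All hypotheses of Proposition~\ref{PROP:pullback_att.} then hold, which gives the existence of the minimal pullback attractor and the formula $\mathcal{A}(t)=\Lambda(D_0,t)\subset\overline{D_0}^{\mathcal{X}_{K,L,m}}$.
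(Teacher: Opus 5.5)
Your proposal is correct and follows the paper's proof. The paper likewise applies Proposition~\ref{PROP:pullback_att.} to the constant compact absorbing family $\widehat{D}_0$ from Proposition~\ref{Prop:PBAbsorbing}, obtaining attraction and the representation \eqref{REP:PBA} directly. Your argument also verifies that $D_0$ is compact with respect to the metric $\mathrm{d}_{\mathcal{X}_{K,L,m}}$, including the $F_1$, $G_1$ integral terms via uniform convergence from the compact embedding $\mathcal{H}^2\emb C(\overline\Omega)\times C(\Gamma)$, which the paper only asserts.
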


\begin{proof}
    In view of Proposition~\ref{Prop:PBAbsorbing}, we can apply the abstract result provided by Proposition~\ref{PROP:pullback_att.}, since the dynamical system admits a compact pullback attracting (in this case even absorbing) set. We thus conclude that the process $\{S^{K,L}_m(t,\tau)\}_{t\geq\tau}$ admits a minimal pullback attractor $\mathcal{A} = \{\mathcal{A}(t)\}_{t\in\R}$ in $\mathcal{X}_{K,L,m}$, which is given by \eqref{REP:PBA} since the time-indexed family $\widehat{D}_0$ is constant in time.
\end{proof}

\section{Convergence to a steady state}

Finally, we want to prove that any weak solution of system \eqref{conCH*} converges as $t\to\infty$ to a single equilibrium solution. However, in order to show this, we require an additional decay condition on our velocity fields:

\begin{enumerate}[label=\textnormal{\bfseries(D5)}, leftmargin=*]
    \item \label{D5} There exist $T_\mathrm{dec}>0 $ and $a\geq 0$ such that the mapping
    \begin{equation*}
     (T_\mathrm{dec},\infty) \ni 
     t \mapsto \bignorm{\big(\bv(t),\bw(t) \big)}_{\LL^2}
    \end{equation*}
    is non-increasing, and
      \begin{equation*}
     \int_{T_\mathrm{dec}}^\infty 
     \mathrm{e}^{as}\bignorm{\big(\bv(s),\bw(s) \big)}_{\LL^2} 
     \ds < \infty.
    \end{equation*}
\end{enumerate}

\medskip

\begin{remark}
    We point out that that condition \ref{D5} is fulfilled, for instance, if $\norm{(\bv(t),\bw(t) )}_{\LL^2}$ decays exponentially fast as $t\to\infty$. 
    If $a>0$, thanks to Hölder's inequality, condition \ref{D5} further implies that, for any $\gamma\in (0,1)$, it holds that
    \begin{align*}
        &\int_{T_\mathrm{dec}}^\infty 
        \bignorm{\big(\bv(s),\bw(s) \big)}_{\LL^2}^\gamma 
        \ds
        \\
        &\quad\leq \left(\int_{T_\mathrm{dec}}^\infty 
        \mathrm{e}^{as} \bignorm{\big(\bv(s),\bw(s) \big)}_{\LL^2} 
        \ds\right)^\gamma \left(\int_{T_\mathrm{dec}}^\infty 
        \mathrm{e}^{-a\frac{\gamma}{1-\gamma}s} 
        \ds\right)^{1-\gamma}<\infty.
    \end{align*}
    \label{control}
\end{remark}

\medskip

In the following, we simply set $\tau=0$. This does not mean any loss of generality, as the concrete value of $\tau$ does not have any impact on the mathematical analysis of the longtime behavior of trajectories. As a first step, we fix an arbitrary $\sigma\in (\tfrac d2,2)$ and recall the definition of the $\omega$-limit set.

\begin{definition}[$\omega$-limit set] 
Suppose that \ref{ASSUMP:1}--\ref{ASSUMP:2}, \ref{S1}--\ref{S3}, and \ref{D1}--\ref{D3} are fulfilled and let  $(\phi_0,\psi_0) \in \mathcal{X}_{K,L,m}$ be arbitrary.

Then the set
\begin{align*}
    \omega^{K,L}(\phi_0, \psi_0) \coloneqq \Bigg\{(\phi_\ast, \psi_\ast)\in\mathcal{W}^\sigma_{K,L,\alpha,\beta, m} \Bigg\vert
    \begin{aligned}
        &\exists (t_k)_{k\in\N}\subset[0,\infty) \text{~with~} t_k\rightarrow\infty\text{~such that~} \\
        &S^{K,L}_m(t_k,0)(\phi_0,\psi_0)\rightarrow (\phi_\ast, \psi_\ast) \text{~strongly~in~}
        \mathcal{H}^\sigma
    \end{aligned}
    \Bigg\},
\end{align*}
is called the \textbf{$\omega$-limit set} to the initial data $(\phi_0, \psi_0)$. 
\end{definition}

\medskip

\begin{lemma}\label{Lemma:oml}
    Suppose that \ref{ASSUMP:1}--\ref{ASSUMP:2}, \ref{S1}--\ref{S3}, and \ref{D1}--\ref{D3} are fulfilled and let  $(\phi_0,\psi_0) \in \mathcal{X}_{K,L,m}$ be arbitrary.
    Then $\oml$ is a non-empty, compact and connected subset of $\mathcal{W}^\sigma_{K,L,m}$. Moreover, it holds that
    \begin{align*}
        \mathrm{dist}_{\mathcal{W}^\sigma_{K,L,m}}\big(S_m^{K,L}(t,0)(\phi_0,\psi_0),\oml\big) \rightarrow 0 \qquad\text{as~}t\rightarrow\infty.
    \end{align*}
\end{lemma}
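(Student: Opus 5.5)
The plan is to use the standard characterization of $\omega$-limit sets of precompact trajectories. The key input is Theorem~\ref{thm:highreg}: fixing any $\varsigma>0$, we have $(\phi,\psi)\in L^\infty(\varsigma,\infty;\mathcal{W}^{2,6})$, and in particular the trajectory $\{S_m^{K,L}(t,0)(\phi_0,\psi_0): t\ge\varsigma\}$ is bounded in $\mathcal{H}^2$. Since $\sigma<2$ and $\Omega$, $\Gamma$ are bounded with sufficiently regular boundary, the embedding $\mathcal{H}^2\emb\mathcal{H}^\sigma$ is compact. Hence the tail of the trajectory is relatively compact in $\mathcal{H}^\sigma$. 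We also need the tail to lie in the closed set $\mathcal{W}^\sigma_{K,L,m}$. This holds because the boundary coupling ($\phi=\alpha\psi$ on $\Gamma$ if $K=0$) and the mass constraint \eqref{WS:MASS} are preserved for all times, and both pass to $\mathcal{H}^\sigma$-limits: the trace is continuous on $\mathcal{H}^\sigma$ with $\sigma>d/2\ge1$, and the means are continuous on $\mathcal{L}^2$.

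\textbf{Non-emptiness.} Take any $t_k\to\infty$. By the relative compactness above, a subsequence of $S_m^{K,L}(t_k,0)(\phi_0,\psi_0)$ converges in $\mathcal{H}^\sigma$, and its limit lies in $\mathcal{W}^\sigma_{K,L,m}$. So $\oml\neq\emptyset$.

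\textbf{Representation and compactness.} We write
\begin{align*}
\oml=\bigcap_{s\ge\varsigma}\overline{\{S_m^{K,L}(t,0)(\phi_0,\psi_0):t\ge s\}}^{\mathcal{H}^\sigma}.
\end{align*}
This is a decreasing intersection of non-empty compact sets, so it is non-empty and compact. Each of these sets is contained in the closed $\mathcal{H}^2$-ball of radius $\sup_{t\ge\varsigma}\norm{(\phi,\psi)(t)}_{\mathcal{H}^2}$, because that ball is weakly closed in $\mathcal{H}^2$ and hence $\mathcal{H}^\sigma$-closed.

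\textbf{Attraction.} Argue by contradiction. Suppose there exist $\varepsilon>0$ and $t_k\to\infty$ with $\mathrm{dist}\big(S_m^{K,L}(t_k,0)(\phi_0,\psi_0),\oml\big)\ge\varepsilon$. By precompactness, a subsequence converges in $\mathcal{H}^\sigma$, and its limit belongs to $\oml$ by definition. This contradicts the distance bound.

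\textbf{Connectedness.} This is the only point needing some care. I will use that $t\mapsto(\phi,\psi)(t)$ is continuous into $\mathcal{H}^\sigma$ on $[\varsigma,\infty)$. This follows by interpolating between the continuity in $\mathcal{H}^1$ from Lemma~\ref{THM:PROC} and the uniform $\mathcal{H}^2$ bound: $\norm{u}_{\mathcal{H}^\sigma}\le C\norm{u}_{\mathcal{H}^1}^{2-\sigma}\norm{u}_{\mathcal{H}^2}^{\sigma-1}$. Each set $\overline{\{(\phi,\psi)(t):t\ge s\}}$ is then the closure of a continuous image of an interval, hence connected. A decreasing intersection of compact connected sets in a Hausdorff space is connected. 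Alternatively, one can run the standard contradiction argument: if $\oml=A\cup B$ with $A,B$ disjoint, non-empty and compact, then by continuity the trajectory must cross the region $\{\mathrm{dist}(\cdot,A)=\mathrm{dist}(A,B)/2\}$ at arbitrarily large times, and compactness produces a limit point of $\oml$ lying outside $A\cup B$.

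I expect the main obstacle to be ensuring the $\mathcal{H}^\sigma$-continuity and precompactness without any separation property. Both rest entirely on the uniform-in-time $\mathcal{W}^{2,6}$ bound from Theorem~\ref{thm:highreg}, which is available under \ref{D1}--\ref{D3}.
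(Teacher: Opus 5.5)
Your proposal is correct and follows the paper's route. The paper also combines the uniform-in-time $\mathcal{W}^{2,6}$-bound from Theorem~\ref{thm:highreg}, which gives precompactness of the tail of the trajectory in $\mathcal{H}^\sigma$ via the compact embedding $\mathcal{H}^2\emb\mathcal{H}^\sigma$, with the abstract $\omega$-limit set result \cite[Theorem~9.1.8]{cazenave}; you have simply reproved that abstract result by hand, and your interpolation argument for $\mathcal{H}^\sigma$-continuity of the trajectory (together with the weak-closedness remark handling the a.e.-in-time nature of the $L^\infty$ bound) supplies a detail that the paper leaves implicit but that the cited theorem requires.
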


\begin{proof}
The claim follows directly by employing the propagation of regularity from Theorem~\ref{thm:highreg} as well as \cite[Theorem~9.1.8]{cazenave}.
\end{proof}

\medskip

\begin{definition}[Set of stationary points]
Suppose that \ref{ASSUMP:1}--\ref{ASSUMP:2}, \ref{S1}--\ref{S3} and \ref{D1} are fulfilled, and let 
$\{T_{m}^{K,L}\}_{t\ge 0}$ be the strongly continuous semigroup on $\mathcal{X}_{K,L,m}$ corresponding to trivial velocity fields $(\bv_\infty,\bw_\infty) = (\mathbf{0},\mathbf{0})$, which was introduced in Corollary~\ref{COR:Semigroup}. 

Then the set
\begin{align*}
	\mathcal{N}_{m}^{K,L}=\left\lbrace  (\phi,\psi) \in \mathcal{W}_{K,L,m}^1:T_{m}^{K,L}(t) (\phi,\psi) = (\phi,\psi) \;\text{for all}\; t\geq 0\right\rbrace.
\end{align*}
is called the \textbf{set of stationary points}.
\end{definition}

\medskip

The set of stationary points $\mathcal{N}_{m}^{K,L}$ owes its name to the fact that its elements are stationary solutions of system \eqref{conCH*}. This is shown by the following lemma.

\begin{lemma}
    Suppose that \ref{ASSUMP:1}--\ref{ASSUMP:2}, \ref{S1}--\ref{S3}, and \ref{D1}--\ref{D3} are fulfilled and let $(\phi_\infty,\psi_\infty)\in\mathcal{N}^{K,L}_m$. 
    
    Then there exist constants
    $\mu_\infty$ and $\theta_\infty$ given by
    \begin{alignat}{2}
        \label{Id:MT_infty:L}
        &\mu_\infty = \beta\theta_\infty = \frac{\beta}{\alpha\beta\abs{\Om} + \abs{\Ga}}\Big(\alpha\intO F^\prime(\phi_\infty)\dx + \intG G^\prime(\psi_\infty)\dG\Big)
        &&\quad\text{if $L\in [0,\infty)$},
        \\[1ex]
        \label{Id:MT_infty:infty}
        &\left\{
        \begin{aligned}
            \mu_\infty &= \frac{1}{\abs{\Om}}\Big(\intO F^\prime(\phi_\infty)\dx - \intG \deln\phi_\infty\dG\Big), \\
            \theta_\infty &= \frac{1}{\abs{\Ga}}\Big(\intG G^\prime(\psi_\infty) + \alpha\deln\phi_\infty\dG\Big)
        \end{aligned}
        \right.
        &&\quad\text{if $L=\infty$,}
    \end{alignat}
    such that the quadruplet $(\phi_\infty,\psi_\infty,\mu_\infty,\theta_\infty)$, where $\mu_\infty$ and $\theta_\infty$ are now interpreted as constant functions, is the unique weak solution of 
    system \eqref{conCH*} on the interval $[0,\infty)$ corresponding to the initial data $(\phi_\infty,\psi_\infty)$.
    In particular, this solution is stationary as each of the functions $\phi_\infty$, $\psi_\infty$, $\mu_\infty$, and $\theta_\infty$ is constant in time.
    
    Furthermore, the pair $(\phi_\infty,\psi_\infty)$ belongs to $\mathcal{W}^{2,6} \cap \mathcal{W}_{K,L,m}^\sigma$ and is a strong solution of the semilinear bulk-surface elliptic system
    \begin{subequations}\label{sta3}
    \begin{alignat}{2}
        -\Lap\phi_\infty + F^\prime(\phi_\infty) &= \mu_\infty 
        &&\qquad\text{in~}\Om, \label{sta3:1} 
        \\
        - \Lapg\psi_\infty + G^\prime(\psi_\infty) + \alpha\deln\phi_\infty &= \theta_\infty
        &&\qquad\text{on~}\Ga, \label{sta3:2} 
        \\
        K\deln\phi_\infty &= \alpha\psi_\infty - \phi_\infty 
        &&\qquad\text{on~}\Ga. \label{sta3:3}
    \end{alignat}
    \end{subequations}
\end{lemma}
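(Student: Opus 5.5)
The plan is to exploit the energy identity for the semigroup $T_m^{K,L}$ together with stationarity. Let $(\phi,\psi,\mu,\theta)$ denote the unique weak solution with initial data $(\phi_\infty,\psi_\infty)$ and $(\bv,\bw)\equiv(\mathbf 0,\mathbf 0)$. Since $(\phi_\infty,\psi_\infty)\in\mathcal N_m^{K,L}$, we have $(\phi(t),\psi(t))=(\phi_\infty,\psi_\infty)$ for all $t\ge0$. Therefore $E_{\mathrm{free}}(\phi(t),\psi(t))$ is constant in $t$. Applying the weak energy inequality \eqref{WS:ENERGY} with vanishing convection terms on $[0,t]$ then gives
\[
\int_0^t \norm{(\mu,\theta)}_{L,\beta}^2\ds\le 0 .
\]
Hence $\nabla\mu=0$, $\nabla_\Gamma\theta=0$ and $\chi(L)(\beta\theta-\mu)=0$ a.e. Consequently $\mu(t)$ and $\theta(t)$ are spatially constant for a.e.~$t$. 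If $L=0$, the constraint $(\mu,\theta)\in\mathcal H^1_{L,\beta}$ yields $\mu=\beta\theta$. If $L\in(0,\infty)$, the identity $\chi(L)(\beta\theta-\mu)=0$ yields the same relation. If $L=\infty$, the two constants are decoupled.

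Next we identify these constants by testing the strong equations \eqref{EQ:MUTH:STRG}. For $L\in[0,\infty)$, we test \eqref{WF:2} with $(\eta,\vartheta)=(\alpha,1)$. This test pair lies in $\mathcal H^1_{K,\alpha}$ for every $K$, including $K=0$, where the constraint $\eta=\alpha\vartheta$ is satisfied. The gradient terms and the $\chi(K)$ term vanish, because $\alpha\vartheta-\eta=0$. We obtain
\[
\alpha|\Omega|\mu+|\Gamma|\theta=\alpha\int_\Omega F'(\phi_\infty)\dx+\int_\Gamma G'(\psi_\infty)\dG .
\]
Inserting $\mu=\beta\theta$ and using $\alpha\beta|\Omega|+|\Gamma|\neq0$ from \ref{ASSUMP:2} gives \eqref{Id:MT_infty:L}. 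For $L=\infty$, we integrate the strong bulk equation over $\Omega$, using $\int_\Omega\Delta\phi_\infty\dx=\int_\Gamma\deln\phi_\infty\dG$. We also integrate the strong surface equation over $\Gamma$, where $\int_\Gamma\Delta_\Gamma\psi_\infty\dG=0$. This yields \eqref{Id:MT_infty:infty}. In every case the constants depend only on $(\phi_\infty,\psi_\infty)$, so they are independent of $t$. Thus the quadruplet is constant in time and, by uniqueness (Proposition~\ref{PROP:WS}), it is the weak solution.

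Finally, for regularity, Proposition~\ref{PROP:WS} already gives $(\phi,\psi)\in L^2_{\mathrm{uloc}}(0,\infty;\mathcal W^{2,6})$. Since the solution is constant in time, this implies $(\phi_\infty,\psi_\infty)\in\mathcal W^{2,6}$. Alternatively, Theorem~\ref{thm:highreg} gives $L^\infty(\varsigma,\infty;\mathcal W^{2,6})$. The condition \ref{D3} holds trivially for zero velocities, and mass constraints are preserved. Because $\mathcal W^{2,6}\hookrightarrow\mathcal H^\sigma$ for $\sigma<2$, we get $(\phi_\infty,\psi_\infty)\in\mathcal W^\sigma_{K,L,m}$. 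The strong equations \eqref{EQ:MUTH:STRG}, with the now-constant $\mu_\infty,\theta_\infty$, are exactly \eqref{sta3}.

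The only delicate point is deriving $\norm{(\mu,\theta)}_{L,\beta}=0$. The energy inequality is stated only for almost every $s$ including $s=0$, but taking $s=0$ suffices here. A second check is that the test pair $(\alpha,1)$ is admissible in the case $K=0$, and it is, since the constraint there is $\eta=\alpha\vartheta$.
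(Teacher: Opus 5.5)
Your proof is correct and follows essentially the same route as the paper: constancy of $E_{\mathrm{free}}$ along the stationary trajectory, combined with the energy inequality at $s=0$, forces $\norm{(\mu,\theta)}_{L,\beta}=0$, hence spatially constant potentials (with $\mu=\beta\theta$ when $L<\infty$), and the regularity from Proposition~\ref{PROP:WS} (or Theorem~\ref{thm:highreg}) turns \eqref{EQ:MUTH:STRG} into \eqref{sta3}. You go slightly beyond the paper by explicitly identifying the constants, testing \eqref{WF:2} with $(\alpha,1)$ for $L<\infty$ and integrating the strong equations for $L=\infty$; this fills in a step the paper leaves implicit and also justifies why the constants are independent of time.
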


\begin{proof}
Suppose that $(\phi_\infty,\psi_\infty) \in\mathcal{N}^{K,L}_m$. This means that $T_{m}^{K,L}(t) (\phi_\infty,\psi_\infty) = (\phi_\infty,\psi_\infty)$
for all $t\ge 0$. Hence, by the definition of the semigroup $\{T_{m}^{K,L}\}_{t\ge 0}$, there exist chemical potentials $(\mu_\infty,\theta_\infty) \in L^2_\mathrm{uloc}(0,\infty;\mathcal{H}^1_{L,\beta})$ such that 
the quadruplet $(\phi_\infty,\psi_\infty,\mu_\infty,\theta_\infty)$ is the weak solution of 
system \eqref{conCH*} with trivial velocity fields on the interval $[0,\infty)$ associated with the initial data $(\phi_\infty,\psi_\infty)$.

Moreover, the energy inequality corresponding to $(\phi_\infty,\psi_\infty,\mu_\infty,\theta_\infty)$ reads as
\begin{align}
    \label{WS:ENERGY:STAT}
    &E_{\mathrm{free}}\big(\phi_\infty,\psi_\infty\big) + \int_s^t\intO\abs{\Grad \mu_\infty}^2 \dxr
     + \int_s^t\intG\abs{\Gradg \theta_\infty}^2 \dGr
    \nonumber \\
    &\qquad +\chi(L)\int_s^t \intG (\beta\theta_\infty-\mu_\infty)^2 \dGr
    \\[1ex]
    &\quad \le E_{\mathrm{free}}(\phi_\infty,\psi_\infty)
    \nonumber 
\end{align}
for almost all $s\ge 0$ and all $t\ge s$. This directly implies that
\begin{align*}
    \intO \abs{\Grad\mu_\infty}^2\dx + \intG \abs{\Gradg\theta_\infty}^2\dG + \chi(L)\intG (\beta\theta_\infty - \mu_\infty)^2\dG = 0
\end{align*}
a.e.~in $[0,\infty)$. Hence, in all cases of $L\in[0,\infty]$, it follows that $\mu_\infty$ and $\theta_\infty$ are constant. If $L\in[0,\infty)$, we additionally have the relation $\mu_\infty = \beta\theta_\infty$.

Furthermore, by means of Theorem~\ref{thm:highreg}, we deduce that $(\phi_\infty,\psi_\infty) \in \mathcal{W}^{2,6} \cap \mathcal{W}_{K,L,m}^\sigma$. Consequently, $(\phi_\infty,\psi_\infty,\mu_\infty,\theta_\infty)$ actually satisfies the equations \eqref{conCH*:2}, \eqref{conCH*:4} and \eqref{conCH*:5} even in the strong sense.
This means precisely that the pair $(\phi_\infty,\psi_\infty)$ is a strong solution of system \eqref{sta3}.
\end{proof}

Next, we show that every element of the $\omega$-limit set is actually a stationary point of $T^{K,L}_m$. In particular, this implies that the set of stationary points $\mathcal{N}^{K,L}_m$ is non-empty.

\begin{lemma}\label{LEM:oml}
    Suppose that \ref{ASSUMP:1}--\ref{ASSUMP:2}, \ref{S1}--\ref{S3}, \ref{D1}--\ref{D3} and \ref{D5} are fulfilled and let  $(\phi_0,\psi_0) \in \mathcal{X}_{K,L,m}$ be arbitrary.
    
    Then it holds that 
    \begin{equation*}
        \oml\subset  \mathcal{N}_{m}^{K,L},
    \end{equation*}
    which means that $\oml$ consists only of stationary points. Moreover, there exists $E_*\in\R$ such that
    \begin{equation*}
        E_{\mathrm{free}}(\phi_*,\psi_*)=E_* \quad\text{for all $(\phi_*,\psi_*)\in \oml$}.
    \end{equation*}
    In particular, $E_*$ may depend on $(\phi_0,\psi_0)$, but is independent of the choice of $(\phi_*,\psi_*)\in \oml$.
\end{lemma}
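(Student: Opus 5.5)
We follow the classical argument for autonomous gradient systems, using \ref{D5} to control the convective contributions. The first step is to show that the energy converges. We set $V(t)\coloneqq\norm{(\bv(t),\bw(t))}_{\LL^2}$. By \ref{D5} with $a\ge 0$, the function $V$ is integrable on $(T_\mathrm{dec},\infty)$, since $\mathrm{e}^{as}\ge 1$. Because $V$ is also non-increasing, it follows that $V(t)\to 0$, and hence $V\in L^2(T_\mathrm{dec},\infty)$. From the energy inequality \eqref{WS:ENERGY} on $[s,t]$, together with $|\phi|,|\psi|\le 1$, we get
\begin{align*}
E_{\mathrm{free}}(\phi(t),\psi(t)) + \int_s^t \norm{(\mu,\theta)}_{L,\beta}^2\dr \le E_{\mathrm{free}}(\phi(s),\psi(s)) + \int_s^t V(r)\norm{(\mu,\theta)}_{L,\beta}\dr.
\end{align*}
Applying Young's inequality to the last term yields
\begin{align*}
E_{\mathrm{free}}(t) + \tfrac12\int_s^t\norm{(\mu,\theta)}_{L,\beta}^2 \le E_{\mathrm{free}}(s) + \tfrac12\int_s^t V^2.
\end{align*}
Consequently, $t\mapsto E_{\mathrm{free}}(\phi(t),\psi(t)) - \tfrac12\int_{T_\mathrm{dec}}^t V^2$ is non-increasing for almost all $s$. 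It is also bounded below, because $F$ and $G$ are bounded below on $[-1,1]$. Therefore $E_{\mathrm{free}}(\phi(t),\psi(t))$ converges to some $E_*$ as $t\to\infty$, and moreover $\int_{T_\mathrm{dec}}^\infty\norm{(\mu,\theta)}_{L,\beta}^2\dr<\infty$. To identify $E_*$ with the energy of every $\omega$-limit point, we use that $\mathcal{H}^\sigma\emb \mathcal{H}^1\cap\mathcal{L}^\infty$ for $\sigma>d/2$. Convergence in $\mathcal{H}^\sigma$ gives convergence of the gradient part. For the potential part, the bound $(\phi,\psi)\in L^\infty(\varsigma,\infty;\mathcal{W}^{2,6})$ from Theorem~\ref{thm:highreg} together with uniform convergence lets us pass to the limit in $\intO F(\phi)$ and $\intG G(\psi)$, using continuity of $F$ and $G$ on $[-1,1]$. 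This yields $E_{\mathrm{free}}(\phi_*,\psi_*)=E_*$.

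The second step shows that every limit point is stationary, by comparing with the autonomous semigroup. Fix $(\phi_*,\psi_*)\in\oml$ with $t_k\to\infty$, and consider the time-shifted trajectories $(\phi_k,\psi_k)(s)\coloneqq(\phi,\psi)(t_k+s)$ for $s\in[0,T]$. We apply the continuous dependence estimate \eqref{est:cont:dep}. The first solution is the shifted one, with initial datum $(\phi,\psi)(t_k)$ and velocities $(\bv,\bw)(t_k+\cdot)$. The second is the semigroup solution $T^{K,L}_m(s)(\phi_*,\psi_*)$ with $(\bv_2,\bw_2)=(\mathbf 0,\mathbf 0)$, so that $\mathcal F\equiv 1$. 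The two initial data have the same mean, since both lie in $\mathcal{W}^1_{K,L,m}$. The estimate then gives
\begin{align*}
\sup_{s\in[0,T]}\norm{(\phi_k-\phi^*,\psi_k-\psi^*)(s)}_{L,\beta,*}^2 \le C_T\norm{(\phi(t_k)-\phi_*,\psi(t_k)-\psi_*)}_{L,\beta,*}^2 + C_T\int_{t_k}^{t_k+T}V^2 \to 0,
\end{align*}
where $(\phi^*,\psi^*)(s)\coloneqq T^{K,L}_m(s)(\phi_*,\psi_*)$. Note that the constant $C_T$ depends only on the energies, which are uniformly bounded by Lemma~\ref{LEM:UNIFORMBOUNDEDNESS} and \ref{D2}.

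On the other hand, the shifted trajectories are nearly constant in time. Testing \eqref{WF:1} as in Lemma~\ref{LEM:UNIFORMBOUNDEDNESS} gives
\begin{align*}
\int_{t_k}^{t_k+T}\norm{(\delt\phi,\delt\psi)}_{L,\beta,*}^2 \le C\int_{t_k}^{t_k+T}\big(V^2+\norm{(\mu,\theta)}_{L,\beta}^2\big)\to 0
\end{align*}
by the first step. Hence $\sup_{s\in[0,T]}\norm{(\phi_k(s)-\phi(t_k),\psi_k(s)-\psi(t_k))}_{L,\beta,*}\le \sqrt{T}\,o(1)\to 0$. Combining the two convergences, we obtain $T^{K,L}_m(s)(\phi_*,\psi_*)=(\phi_*,\psi_*)$ for all $s\in[0,T]$. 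Since $T$ is arbitrary, this means $(\phi_*,\psi_*)\in\mathcal{N}^{K,L}_m$.

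The main obstacle is the first step. The energy is not a Lyapunov functional, so the perturbed monotonicity derived from \ref{D5} is what substitutes for it, and the constant in \eqref{est:cont:dep} must be uniform in $k$; both have to be checked carefully. An alternative for the second step would be to pass to the limit directly in the weak formulation along $t_k+s$, using $\nabla\mu\to 0$ in $L^2$. This would give $\mu_*,\theta_*$ constant and hence \eqref{sta3}; the comparison argument above, however, directly yields membership in $\mathcal{N}^{K,L}_m$.
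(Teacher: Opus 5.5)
Your proof is correct, but it takes a different route from the paper's in both steps.

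\textbf{Energy convergence.} The paper introduces
$H(t)=E_{\mathrm{free}}(\phi(t),\psi(t))-\int_0^t\intO\phi\bv\cdot\Grad\mu\dxr-\int_0^t\intG\psi\bw\cdot\Gradg\theta\dGr$.
This function is non-increasing by \eqref{WS:ENERGY}. The paper bounds it from below using \ref{D5} together with $(\mu,\theta)\in L^\infty(T_\mathrm{dec},\infty;\mathcal H^1)$ from Theorem~\ref{thm:highreg}.

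You instead absorb the convective terms into the dissipation by Young's inequality, so that $E_{\mathrm{free}}-\frac12\int_0^t V^2$ is non-increasing. This only needs $V\in L^2$, which \ref{D2} gives directly. Your detour through \ref{D5} is therefore superfluous. In fact your argument proves the lemma without \ref{D5} and without the higher regularity, and it yields $\int^\infty\norm{(\mu,\theta)}_{L,\beta}^2<\infty$ for free.

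\textbf{Stationarity.} The paper passes to the limit in the time-shifted problems by Banach--Alaoglu and Aubin--Lions--Simon. It identifies the limit, by uniqueness, as the zero-velocity solution starting at $(\phi_*,\psi_*)$. It then passes to the limit in the energy inequality to get $\int_0^t\norm{(\widetilde\mu,\widetilde\theta)}_{L,\beta}^2\le 0$.

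You instead compare with the semigroup orbit via \eqref{est:cont:dep}. You add the near-constancy estimate coming from $\int_{t_k}^{t_k+T}\norm{(\delt\phi,\delt\psi)}_{L,\beta,\ast}^2\to 0$. This avoids compactness, the identification of the nonlinear terms, and the lower-semicontinuity argument. The price is reliance on the quantitative stability estimate with a constant uniform in $k$. You justify this correctly: the energies $E_{\mathrm{free}}(\phi(t_k),\psi(t_k))$ are bounded via Lemma~\ref{LEM:UNIFORMBOUNDEDNESS}, and time shifts do not increase the $L^2$ norms of the velocities.

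\textbf{Points to tighten.} The energy inequality gives monotonicity only from almost every starting time. To get convergence of $E_{\mathrm{free}}(\phi(t),\psi(t))$ along every sequence, in particular along $t_k$, you should invoke time continuity of the energy. This follows from $(\phi,\psi)\in C([0,\infty);\mathcal H^1)$ (Lemma~\ref{THM:PROC}) and $\abs{\phi},\abs{\psi}\le 1$; the paper glosses over the same point. Also, the $\mathcal W^{2,6}$ bound is not needed to show $E_{\mathrm{free}}(\phi_*,\psi_*)=E_*$: convergence in $\mathcal H^\sigma$ already gives $\mathcal H^1$ and uniform convergence.
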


\medskip

\begin{proof}
    Our proof is largely inspired by the line of argument in \cite{AbelsWilke,P,GGPS,GrasselliPoiatti}. However, compared to these previous results, we have to overcome additional difficulties since, due to the involved velocity fields, our energy functional $E_\mathrm{free}$ is usually not decreasing along trajectories. 
    
    Let $(\phi,\psi,\mu,\theta)$ denote the weak solution of \eqref{conCH*} on $[0,\infty)$ that corresponds to the initial data $(\phi_0,\psi_0)$,
    let $(\phi_*,\psi_*)\in \oml$, and let $(t_n)_{n\in\N}$ with $t_n\to\infty$ as $n\to\infty$ be a corresponding sequence
    with 
    \begin{equation}
        \label{CONV:INI}
        (\phi(t_n),\psi(t_n))\to (\phi_*,\psi_*) \quad \text{in $\mathcal{H}^\sigma$ as $n\to \infty$.}
    \end{equation}
    Without loss of generality, we assume that $t_n>1$ for all $n\in\N$. Now, for any $n\in\N$ and any $t\in [0,\infty)$, we define 
    \begin{align*}
        (\phi_n(t),\psi_n(t),\mu_n(t),\theta_n(t)) 
        &\coloneqq (\phi(t+t_n),\psi(t+t_n),\mu(t+t_n),\theta(t+t_n)),
        \\
        (\bv_n(t),\bw_n(t))
        &\coloneqq (\bv_n(t+t_n),\bw_n(t+t_n)).
    \end{align*}
    This means that, for any $n\in\N$, the quadruplet $(\phi_n,\psi_n,\mu_n,\theta_n)$ is a solution to system \eqref{conCH*}, where the velocity fields $(\bv,\bw)$ are replaced by $(\bv_n,\bw_n)$ and the initial conditions $(\phi_0,\psi_0)$ are replaced by $(\phi(t_n),\psi(t_n))$.
    
    As strongly convergent sequences are bounded, we deduce that 
    \begin{equation*}
        \{(\phi(t_n),\psi(t_n))\}_{n\in\N} \subset \mathcal{W}^\sigma_{K,L,\alpha,\beta, m}
        \quad\text{is bounded.}
    \end{equation*} 
    Recalling Proposition~\ref{PROP:WS}, we infer that there exists a constant 
    $C_*>0$ (which may depend on the system parameters, $E_\mathrm{free}(\phi_0,\psi_0)$ and $\norm{(\bv,\bw)}_{L^2_\mathrm{uloc}(\R;\mathbf{L}^2(\Omega)\times \mathbf{L}^2(\Gamma))}$, but not on $n$) such that
    \begin{align*}
        &\norm{(\phi_n,\psi_n)}_{H^1(0,\infty;(\mathcal H^1_{L,\beta})')}
            + \norm{(\phi_n,\psi_n)}_{L^2_\mathrm{uloc}(0,\infty;\mathcal W^{2,6})}
            \\
        &\quad 
            + \norm{(F'(\phi_n),G'(\psi_n))}_{L^2_\mathrm{uloc}(0,\infty;\mathcal L^6)}
            + \norm{(\mu_n,\theta_n)}_{L^2_\mathrm{uloc}(0,\infty;\mathcal H^1_{L,\beta})}
        \leq C_*
    \end{align*}
    for all $n\in \N$. 
    Moreover, thanks to assumption \ref{D2}, we have 
    \begin{align}
        \label{CONV:VEL}
        \norm{(\bv_n,\bw_n)}_{L^2(0,\infty;\mathcal{L}^2)}^2 = \int_{t_n}^{\infty}\norm{(\bv(s),\bw(s))}_{\mathcal{L}^2}^2\ds\to 0 \qquad\text{as~}n\to \infty.
    \end{align}
    Thus, proceeding similarly to \cite[Proof of Theorem~3.4]{Knopf2025}, we employ the Banach--Alaoglu theorem and the Aubin--Lions--Simon lemma, to conclude that the quadruplet $(\phi_n,\psi_n,\mu_n,\theta_n)$ converges as $n\to\infty$ to the weak solution $(\widetilde\phi,\widetilde\psi,\widetilde\mu,\widetilde\theta)$ of system \eqref{conCH*} with the velocity fields $(\bv,\bw)=(\mathbf{0},\mathbf{0})$ (due to \eqref{CONV:VEL}) and the initial data $(\phi_*,\psi_*)$ at initial time zero (due to \eqref{CONV:INI}) in the following sense:
    \begin{alignat}{2}
        \label{CONV:PP}
        (\phi_n,\psi_n) &\rightarrow (\widetilde\phi,\widetilde\psi) 
        &&\qquad\text{weakly in~} H^1(0,\infty;(\mathcal{H}^1_{L,\beta})^\prime), 
        \nonumber \\
        & &&\qquad\text{weakly-star in~} L^\infty(0,\infty;\mathcal{H}^1_{K,\alpha}), 
        \nonumber \\
        & &&\qquad\text{weakly in~} L^2_\mathrm{uloc}(0,\infty;\mathcal{W}^{2,6}) , 
        \\
        \label{CONV:MT}
        (\mu_n,\theta_n) &\rightarrow (\widetilde\mu,\widetilde\theta) 
        &&\qquad\text{weakly in~} L^2_\mathrm{uloc}(0,\infty;\mathcal{H}^1_{L,\beta}), 
        \\
        \label{CONV:FG}
        \big(F^\prime(\phi_n),G^\prime(\psi_n)\big) &\rightarrow \big(F^\prime(\widetilde\phi), G^\prime(\widetilde\psi)\big) 
        &&\qquad\text{weakly in~} L^2_\mathrm{uloc}(0,\infty;\mathcal{L}^6).
    \end{alignat}
    
    Now, we consider the function
    \begin{align*}
        H:[0,\infty) \to \R, \quad
        H(t)\coloneqq &E_{\mathrm{free}}\big(\phi(t),\psi(t)\big)
            - \int_0^t\intO \phi\bv\cdot \Grad\mu \dxr 
        \\
        &\quad - \int_0^t\intG \psi\bw \cdot \Gradg\theta \dGr.
    \end{align*}
    It directly follows from the energy inequality \eqref{WS:ENERGY} that $H$ is non-increasing. 
    Moreover, recalling that $|\phi|< 1$ a.e.~in $Q_0$ and $|\psi|< 1$ a.e.~on $\Sigma_0$, we use Hölder's inequality and Young's inequality to deduce that
    \begin{align}
        \label{EST:F:BEL}
        \begin{split}
        H(t) 
        &\ge E_{\mathrm{free}}\big(\phi(t),\psi(t)\big)
            - \int_0^t \norm{(\bv,\bw)}_{\mathbf{L}^2(\Omega)^2 \times \mathbf{L}^2(\Gamma)}
                \norm{(\Grad\mu,\Gradg\theta)}_{\mathbf{L}^2(\Omega) \times \mathbf{L}^2(\Gamma)} \dr 
        \\
        &\ge - \norm{(\mu,\theta)}_{L^\infty(T_\mathrm{dec},\infty;\mathcal{H}^1)} 
                \int_{T_\mathrm{dec}}^\infty \mathrm{e}^{ar} \norm{(\bv(r),\bw(r))}_{\mathbf{L}^2(\Omega) \times \mathbf{L}^2(\Gamma)} \dr
        \\
        &\qquad
            - \frac{1}{2}\int_0^{T_\mathrm{dec}} \norm{(\bv,\bw)}_{\mathbf{L}^2(\Omega) \times \mathbf{L}^2(\Gamma)}^2\dr
            - \frac{1}{2} \int_0^{T_\mathrm{dec}} \norm{(\mu,\theta)}_{\mathcal{H}^1}^2 \dr
        \end{split}
    \end{align}
    for all $t\ge 0$. Here, $a \geq 0$ and $T_\mathrm{dec}>0$ are as introduced in \ref{D5}.
    In view of Proposition~\ref{PROP:WS} and Theorem~\ref{thm:highreg}, we have
    \begin{align*}
        (\mu,\theta) \in L^2(0,T_\mathrm{dec};\mathcal{H}^1) \cap L^\infty(T_\mathrm{dec},\infty;\mathcal{H}^1).
    \end{align*}
    Together with the assumptions \ref{D2} and \ref{D5} on the velocity fields, we conclude from \eqref{EST:F:BEL} that $H$ is bounded from below.
    Consequently, there exists $E_* \in\R$ such that $H(t)\to E_*$ as $t\to \infty$. 
    In particular, for any $t\in [0,\infty)$, this entails that
    \begin{equation*}
        H(t+t_n)\to E_*\quad\text{as~}n\to \infty.
    \end{equation*}
    
    Using the convergences \eqref{CONV:VEL}--\eqref{CONV:MT} along with the weak-strong convergence principle, we find that
    \begin{align}
        \label{CONV:CONV}
        \int_0^{t}\intO \phi_n\bv_n\cdot \Grad\mu_n \dxr + \int_0^{t}\intG \psi_n\bw_n \cdot \Gradg\theta_n \dGr  \to 0,\quad\text{as }n\to\infty. 
    \end{align}
    By the definition of $(\phi_n,\psi_n)$, we infer that, for every $t\ge 0$, 
    \begin{align}
        \label{CONV:EN:1}
        \begin{split}
        &E_{\mathrm{free}}\big(\phi_n(t),\psi_n(t)\big)
        \\
        &\quad =\; H(t+t_n) 
            + \int_0^{t}\intO \phi_n\bv_n\cdot \Grad\mu_n \dxr 
            + \int_0^{t}\intG \psi_n\bw_n \cdot \Gradg\theta_n \dGr
        \;\to\; E_*
        \end{split}
    \end{align}
    as $n\to\infty$.
    Next, we recall that $(\phi_n,\psi_n,\mu_n,\theta_n)$ satisfies the energy inequality 
    \begin{align}
        \label{WS:ENERGY:N}
        \begin{aligned}
            &E_{\mathrm{free}}\big(\phi_n(t),\psi_n(t)\big) 
                + \int_0^t\intO\abs{\Grad \mu_n}^2 \dxr
                + \int_0^t\intG\abs{\Gradg \theta_n}^2 \dGr
            \\
            & \quad 
                +\chi(L)\int_0^t \intG (\beta\theta_n-\mu_n)^2 \dGr		
            \\
            & \le E_{\mathrm{free}}(\phi_n(0),\psi_n(0))	
                + \int_0^t\intO \phi_n\bv_n \cdot \Grad\mu_n \dxr
                + \int_0^t\intG \psi_n\bw_n \cdot \Gradg\theta_n \dGr
        \end{aligned}
    \end{align}
    for all $t\ge 0$.
    Using the convergences \eqref{CONV:MT}, \eqref{CONV:CONV} and \eqref{CONV:EN:1}, 
    we use the weak lower semicontinuity of the involved norms to conclude that
    \begin{align*}
        &E_* + \int_0^t\intO\abs{\Grad \widetilde\mu}^2 \dxr
    	   + \int_0^t\intG\abs{\Gradg \widetilde\theta}^2 \dGr
           + \chi(L)\int_0^t \intG (\beta\widetilde\theta-\widetilde\mu)^2 \dGr
        \\[2ex]
        &\quad
        \le \underset{n\to\infty}{\lim}\, E_{\mathrm{free}}\big(\phi_n(t),\psi_n(t)\big) 
        \\
        &\qquad
            + \underset{n\to\infty}{\liminf}\,\Bigg[ \int_0^t\intO\abs{\Grad \mu_n}^2 \dxr
            + \int_0^t\intG\abs{\Gradg \theta_n}^2 \dGr
        \\
        &\qquad\qquad\qquad\qquad
            +\chi(L)\int_0^t \intG (\beta\theta_n-\mu_n)^2 \dGr	\Bigg]	
        \\[1ex]
        &\quad
        \le \underset{n\to\infty}{\limsup}\, \Bigg[ E_{\mathrm{free}}\big(\phi_n(t),\psi_n(t)\big) 
            + \int_0^t\intO\abs{\Grad \mu_n}^2 \dxr
            + \int_0^t\intG\abs{\Gradg \theta_n}^2 \dGr
        \\
        &\qquad\qquad\qquad\qquad
            +\chi(L)\int_0^t \intG (\beta\theta_n-\mu_n)^2 \dGr	\Bigg]	
        \\[2ex]
        &\quad 
        \le \underset{n\to\infty}{\limsup}\; E_{\mathrm{free}}(\phi_n(0),\psi_n(0))	
        = E_*\,.
    \end{align*}
    Consequently,
    \begin{align*}
        \int_0^t\intO\abs{\Grad \widetilde\mu}^2 \dxr
    	   + \int_0^t\intG\abs{\Gradg \widetilde\theta}^2 \dGr
           + \chi(L)\int_0^t \intG (\beta\widetilde\theta-\widetilde\mu)^2 \dGr \leq 0
    \end{align*}
    for all $t\ge 0$. This readily entails that $\widetilde \mu$ is spatially constant in $\Omega$ and $\widetilde \theta$ is spatially constant on $\Sigma$. As $(\widetilde\phi,\widetilde\psi,\widetilde\mu,\widetilde\theta)$ is the weak solution of system \eqref{conCH*} with trivial velocity fields to the initial data $(\phi_*,\psi_*)$, it immediately follows that
    \begin{align*}
        \big(\widetilde \phi(t),\widetilde\psi(t)\big) = (\phi_*,\psi_*) \quad\text{for all $t\ge 0$,}
    \end{align*}
    and thus, by comparison, $\widetilde\mu,\widetilde \theta$ are constant also in time.
    Consequently, $(\phi_*,\psi_*) \in \mathcal{N}_{m}^{K,L}$. Finally, recalling that $\sigma>\frac{d}{2}$, we use \eqref{CONV:INI} to deduce that
    \begin{align*}
        E(\phi_n(0),\psi_n(0)) = E(\phi(t_n),\psi(t_n)) \to E(\phi_*,\psi_*)
    \end{align*}
    as $n\to\infty$. Because of \eqref{CONV:EN:1}, it follows that
    \begin{align*}
        E(\phi_*,\psi_*) = E_*
    \end{align*}
    due to the uniqueness of the limit. We point out that, by construction, $E_*$ may depend on $(\phi_0,\psi_0)$, but is independent of the choice of $(\phi_*,\psi_*) \in \oml$.
    Hence, the proof is complete.
\end{proof}

\medskip\pagebreak[3]

The next lemma shows that every stationary solution of \eqref{conCH*} satisfies a strict separation property. 

\begin{lemma}\label{LM.boundedstat}
	Suppose that the assumptions \ref{ASSUMP:1}--\ref{ASSUMP:2}, \ref{S1}-\ref{S3} and \ref{D1} are fulfilled. 
    
    Then for any $(\phi,\psi)\in \mathcal{N}_{m}^{K,L} $ there exists $\delta_\star = \delta_\star(\phi,\psi)\in(0,1)$ such that 
    \begin{align}
        \label{SEPPROP:SS}
        \norm{\phi}_{C(\overline{\Om})} \leq 1 - \delta_\star, \qquad \norm{\psi}_{C(\Ga)} \leq 1 - \delta_\star.
    \end{align}
\end{lemma}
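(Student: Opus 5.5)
The plan is to use the stationary elliptic system \eqref{sta3} together with the regularity already available for stationary points. By the preceding lemma, every $(\phi,\psi)\in\mathcal{N}^{K,L}_m$ solves \eqref{sta3} with \emph{constant} chemical potentials $\mu_\infty,\theta_\infty$ given by \eqref{Id:MT_infty:L} or \eqref{Id:MT_infty:infty}. It also satisfies $(\phi,\psi)\in\mathcal{W}^{2,6}$ and $(F'(\phi),G'(\psi))\in\mathcal{L}^6$. Since $\mathcal{W}^{2,6}\emb C^{1,\gamma}(\overline\Om)\times C^{1,\gamma}(\Ga)$ for $d\le 3$, both $\phi$ and $\psi$ are continuous up to the boundary, and $\deln\phi\in C(\Ga)$ is bounded. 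The strategy is to show first $\norm{\phi}_{C(\overline\Om)}<1$ and $\norm{\psi}_{C(\Ga)}<1$ pointwise. Compactness of $\overline\Om$ and $\Ga$ then turns these strict inequalities into a uniform gap $\delta_\star>0$, since a continuous function attaining values strictly inside $(-1,1)$ on a compact set stays a positive distance away from $\pm1$.

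\textbf{Boundary component.} Rewrite \eqref{sta3:2} as
\[
G_1'(\psi)=\theta_\infty-G_2'(\psi)+\Lapg\psi-\alpha\deln\phi .
\]
The right-hand side lies in $L^\infty(\Ga)$, except for $\Lapg\psi$, which is only in $L^6(\Ga)$. I will not bound it pointwise. Instead I will use a maximum-principle argument on the closed manifold $\Ga$, where no boundary terms appear. Test the equation with $(\psi-k)_+^{p-1}$ for $k$ close to $1$. The term $-\Lapg\psi$ contributes a nonnegative quantity $\int(p-1)(\psi-k)_+^{p-2}|\Gradg\psi|^2$, and monotonicity gives $G_1'(\psi)\ge G_1'(k)$ on $\{\psi>k\}$. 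Hence
\[
\big(G_1'(k)-M\big)\int_\Ga(\psi-k)_+^{p-1}\le 0,
\]
where $M\coloneqq\norm{\theta_\infty-G_2'(\psi)-\alpha\deln\phi}_{L^\infty(\Ga)}$. Choosing $k<1$ with $G_1'(k)>M$, which is possible by \ref{S1}, forces $\psi\le k$; the case near $-1$ is symmetric.

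\textbf{Bulk component.} The same truncation idea applies to \eqref{sta3:1}, tested with $(\phi-k)_+$. The difficulty is the boundary term $\int_\Ga\deln\phi\,(\phi-k)_+$. For $K\in(0,\infty)$ it equals $\frac1K\int_\Ga(\alpha\psi-\phi)(\phi-k)_+$, which is nonpositive once $k\ge|\alpha|\,\norm{\psi}_{C(\Ga)}$. That threshold is already below $1$ by the boundary step, since $|\alpha|\le1$. For $K=\infty$ the boundary term vanishes. For $K=0$ we have $\phi=\alpha\psi$ on $\Ga$, so $(\phi-k)_+=0$ on $\Ga$ for $k$ larger than $\norm{\phi}_{C(\Ga)}<1$. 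In all cases we obtain
\[
\int_\Om|\Grad(\phi-k)_+|^2+\big(F_1'(k)-M'\big)\int_\Om(\phi-k)_+\le0,
\]
with $M'\coloneqq|\mu_\infty|+\norm{F_2'(\phi)}_{L^\infty}$, and hence $\phi\le k<1$.

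The main obstacle is the order and coupling of these steps. The boundary estimate needs $\deln\phi\in L^\infty(\Ga)$, which the $\mathcal{W}^{2,6}$ regularity provides. The bulk estimate needs the boundary separation first, and in the case $K\in(0,\infty)$ it needs $|\alpha|\le1$. If the Robin-type sign argument fails, the fallback is to use the domination property \ref{S2} via the relation $\phi=\alpha\psi$ (for $K=0$). Finally, taking $\delta_\star$ as the minimum of the gaps obtained in the two steps gives \eqref{SEPPROP:SS}.
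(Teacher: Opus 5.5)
Your argument is correct, and it takes a genuinely different route from the paper. The paper gives no argument of its own here; it simply invokes Proposition~5.3 of \cite{Giorgini2025}, a separation result for the stationary bulk-surface elliptic system \eqref{sta3}. You instead give a self-contained Stampacchia-type truncation argument built on the $\mathcal{W}^{2,6}$ regularity of stationary points already established in the paper.

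The key observation is that $W^{2,6}(\Omega)\hookrightarrow C^1(\overline\Omega)$ for $d\le 3$. Hence $\deln\phi$ is a bounded datum, and \eqref{sta3:2} decouples into an equation on the closed manifold $\Gamma$ with $L^\infty$ forcing. There are no boundary terms there, so the truncation with $(\psi-k)_+$ works immediately. Afterwards the bulk equation \eqref{sta3:1} can be treated:
\begin{itemize}
\item for $K\in(0,\infty)$, the coupling condition \eqref{sta3:3} gives $\deln\phi\le 0$ on $\{\phi>k\}\cap\Gamma$, using $|\alpha|\le 1$ and the surface gap just obtained;
\item for $K=0$ and $K=\infty$, the boundary term vanishes.
\end{itemize}
This surface-then-bulk ordering is exactly what is needed. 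Your test functions are admissible, since $(\phi,\psi)\in\mathcal{W}^{2,6}$ and $(F'(\phi),G'(\psi))\in\mathcal{L}^6$.

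What your approach buys is transparency and independence from the external reference. What it costs is that it depends on the a~posteriori $\mathcal{W}^{2,6}$ regularity imported via Theorem~\ref{thm:highreg}, and on the sign structure of the boundary coupling. The paper, by contrast, delegates the whole question to a result formulated directly for \eqref{sta3}.

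Two minor remarks. The detour ``first pointwise $<1$, then compactness'' is unnecessary: the truncation already gives $\phi\le k<1$ a.e., hence everywhere by continuity. The announced fallback via \ref{S2} is never needed.
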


\begin{proof}
    As any $(\phi,\psi)\in \mathcal{N}_{m}^{K,L} $ is a solution to the semilinear bulk-surface elliptic system \eqref{sta3}, the strict separation property \eqref{SEPPROP:SS} follows from 
    \cite[Proposition~5.3]{Giorgini2025}.
\end{proof}

\medskip

To show that any weak solution of system \eqref{conCH*} converges to a stationary point, the following lemma is crucial.

\begin{lemma}[{\L}ojasiewicz--Simon inequality]\label{LEM:LS}
Suppose that the assumptions \ref{ASSUMP:1}--\ref{ASSUMP:2}, \ref{S1}-\ref{S3} and \ref{D1} are fulfilled.
In addition, we assume that $F_1, G_1$ are real analytic functions on $(-1,1)$, and $F_2, G_2$ are real analytic functions on $\R$. 

Then for any $(\phi_\infty, \psi_\infty)\in\omega^{K,L}(\phi_0,\psi_0)$, there exist constants $\varpi^\ast\in (0,\tfrac 12)$, $b^\ast > 0$, and $C>0$, such that
\begin{align*}
   C \left\|\mathbf{P}_L\begin{pmatrix}
        -\Lap\zeta + F^\prime(\zeta), \\
        -\Lapg\xi + G^\prime(\xi) + \alpha\deln\zeta
    \end{pmatrix}\right\|_{\mathcal{L}^2} \geq \abs{E_{\mathrm{free}}(\zeta, \xi) - E_{\mathrm{free}}(\phi_\infty, \psi_\infty)}^{1 - \varpi^\ast}
\end{align*}
for all $(\zeta,\xi)\in\mathcal{W}^2_{K,L,m}$ satisfying $\norm{(\zeta - \phi_\infty, \xi - \psi_\infty)}_{\mathcal{H}^2}\leq b^\ast$.
Here, $\mathbf{P}_L$ denotes the projection of $\mathcal{L}^2$ onto
\begin{equation*}
    \begin{cases} 
        \{\scp{\phi}{\psi}\in\mathcal{L}^2 : \beta\abs{\Om}\meano{\phi} + \abs{\Ga}\meang{\psi} = 0 \} &\text{if~} L\in[0,\infty), \\
        \{\scp{\phi}{\psi}\in\mathcal{L}^2: \meano{\phi} = \meang{\psi} = 0 \} &\text{if~}L=\infty.
        \end{cases} 
\end{equation*}
\end{lemma}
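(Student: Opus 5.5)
The plan is to reduce the statement to the abstract {\L}ojasiewicz--Simon inequality for analytic functionals on Hilbert spaces (in the form of Chill's theorem: analytic energy $E$ on an open subset of a Banach space $V$, with $E'$ analytic into a space $W'$, and with the linearization at a critical point Fredholm of index zero). The singular nature of $F_1,G_1$ is circumvented by the separation property. Let $(\phi_\infty,\psi_\infty)\in\oml$. By Lemma~\ref{LEM:oml} it belongs to $\mathcal{N}^{K,L}_m$, so Lemma~\ref{LM.boundedstat} gives $\norm{\phi_\infty}_{C(\overline\Om)},\norm{\psi_\infty}_{C(\Ga)}\le 1-\delta_\star$. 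Since $\mathcal{H}^2\emb\mathcal{L}^\infty$ for $d\le3$, choosing $b^\ast$ small guarantees that every $(\zeta,\xi)$ with $\norm{(\zeta-\phi_\infty,\xi-\psi_\infty)}_{\mathcal{H}^2}\le b^\ast$ satisfies $\norm{(\zeta,\xi)}_{\mathcal{L}^\infty}\le 1-\delta_\star/2$. On $[-1+\delta_\star/2,1-\delta_\star/2]$ we may then replace $F,G$ by functions that agree with them there and are real analytic, with bounded derivatives of all orders, on a complex neighbourhood of this interval. Consequently the Nemytskii maps $\zeta\mapsto F(\zeta)$ and $\zeta\mapsto F'(\zeta)$ are analytic from a ball in $H^2(\Om)$ (a Banach algebra) into $L^1$ and $L^2$, and similarly on $\Ga$.

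Next I would set up the functional framework. Write elements near $(\phi_\infty,\psi_\infty)$ as $(\phi_\infty,\psi_\infty)+u$ with $u$ in the affine-free space $V\coloneqq\mathcal{H}^2\cap\mathcal{H}^1_{K,\alpha}\cap\{\text{mass constraint zero}\}$, where the constraint is $\beta|\Om|\meano{u_1}+|\Ga|\meang{u_2}=0$ if $L<\infty$ and both means vanish if $L=\infty$. For $K=\infty$ one adds the boundary condition $\deln u_1=0$ to $V$; for $K\in[0,\infty)$ the Robin/Dirichlet-type condition is encoded in $\mathcal{H}^1_{K,\alpha}$ and the natural boundary conditions. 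Let $E(u)\coloneqq E_{\mathrm{free}}((\phi_\infty,\psi_\infty)+u)$. Its first variation in $\mathcal{L}^2$, restricted to test directions in the constrained space, equals $\mathbf{P}_L$ applied to $(-\Lap\zeta+F'(\zeta),\,-\Lapg\xi+G'(\xi)+\alpha\deln\zeta)$, after integrating by parts using the coupling condition \eqref{conCH*:5} (for $K\in(0,\infty)$ the term $\chi(K)(\alpha\psi-\phi)$ is exactly $\deln\zeta$ on $\Ga$). Here one checks that $\mathbf{P}_L$ is the orthogonal projection onto the mass-zero subspace, and that $\mathbf{P}_L$ of the stationary equation \eqref{sta3} vanishes. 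This is precisely why the constants $\mu_\infty,\theta_\infty$, with $\mu_\infty=\beta\theta_\infty$ when $L<\infty$, are annihilated. So $E'\colon V\to W\coloneqq \mathbf{P}_L\mathcal{L}^2$ is analytic and $E'(0)=0$.

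The key step, which I expect to be the main obstacle, is verifying the Fredholm property of the linearization
\[
E''(0)u=\mathbf{P}_L\big(-\Lap u_1+F''(\phi_\infty)u_1,\;-\Lapg u_2+G''(\psi_\infty)u_2+\alpha\deln u_1\big)
\]
as a map $V\to W$, uniformly across the cases $K\in[0,\infty]$ and $L\in[0,\infty]$. I would write it as $A+B$. Here $A$ is the bulk-surface elliptic operator with the $K$-coupling, composed with $\mathbf{P}_L$. The operator $A$ is an isomorphism $V\to W$ by Lax--Milgram on the constrained space, using Lemma~\ref{lemma:poinc} and the assumption $\alpha\beta|\Om|+|\Ga|\neq0$, together with $\mathcal{H}^2$-regularity from \cite[Theorem~3.3]{Knopf2021}. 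The operator $B$ is the multiplication by the bounded functions $F''(\phi_\infty),G''(\psi_\infty)$ (bounded thanks to separation). $B$ is compact $V\to W$ by the compact embedding $\mathcal{H}^2\emb\mathcal{L}^2$. Hence $E''(0)$ is Fredholm of index zero, and by symmetry its kernel is complemented.

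The abstract result then yields $\varpi^\ast\in(0,\frac12)$, $b^\ast>0$ and $C>0$ with $|E(u)-E(0)|^{1-\varpi^\ast}\le C\norm{E'(u)}_{W}$ for $\norm{u}_V\le b^\ast$, which is the claim since $\norm{\cdot}_W$ is the $\mathcal{L}^2$-norm. The delicate point is matching the norm in which the gradient is measured ($\mathcal{L}^2$) with the domain space $\mathcal{H}^2$. For this I would use the Hilbert-space version with $V\emb H=W$ dense, and invoke Chill's formulation requiring only $E'\colon V\to W$ analytic and the kernel projection being bounded on $W$. If the case $K=0$ with $\alpha\ne\beta$ causes trouble with the constraint, I would parametrize $\psi$ through $\phi|_\Ga=\alpha\psi$ and treat it analogously.
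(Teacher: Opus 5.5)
Your overall route (separation of the equilibrium, local analyticity of the Nemytskii maps on an $\mathcal{H}^2$-ball, and Chill's abstract {\L}ojasiewicz--Simon theorem with a Fredholm linearization) is the standard one. The paper itself only cites \cite{Lv2024a,Lv2024b} for $\alpha=1$ and says the general case is analogous. However, your functional setup fails for $K\in(0,\infty)$.

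You assert that the Robin condition is ``encoded in $\mathcal{H}^1_{K,\alpha}$ and the natural boundary conditions''. But $\mathcal{H}^1_{K,\alpha}=\mathcal{H}^1$ for $K>0$, and natural boundary conditions come for free only in a weak, $V'$-valued formulation, not in an $\mathcal{H}^2$/$\mathcal{L}^2$ one. Write $(\zeta,\xi)=(\phi_\infty,\psi_\infty)+u$ with $u$ in your $V$, let $h$ be a tangent direction, and let $r$ be the residual. Integration by parts then actually gives
\[
E'(u)h=\big(\mathbf{P}_L r,h\big)_{\mathcal{L}^2}+\intG\big(\deln\zeta-\tfrac1K(\alpha\xi-\zeta)\big)(h_1-\alpha h_2)\dG .
\]
So $E'$ does not map $V$ into $W$: you used \eqref{conCH*:5} for $\zeta$, which elements of your $V$ need not satisfy.

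Likewise, $A\colon V\to W$ is not injective. For every $g\in H^{3/2}(\Gamma)$ its kernel contains a pair with $u_1|_\Gamma=g$: solve $-\Lap u_1=\mathrm{const}$ with Dirichlet datum $g$, then the surface equation for $u_2$, fixing the constants via the mass constraint. Hence $A+B$ is not Fredholm.

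No argument can close this within your $V$, because the inequality as literally stated, for all $(\zeta,\xi)\in\mathcal{W}^2_{K,L,m}$, is false for $K\in(0,\infty)$. Take $\alpha=0$, $L=\infty$, $F_2=G_2=0$, $m=(0,0)$, zero velocities and $(\phi_0,\psi_0)=(0,0)$, so that $\oml=\{(0,0)\}$.
\begin{itemize}
\item The implicit function theorem gives $\zeta_\varepsilon$ with $-\Lap\zeta_\varepsilon+F'(\zeta_\varepsilon)=c_\varepsilon$, $\zeta_\varepsilon|_\Gamma=\varepsilon g$, $\int_\Om\zeta_\varepsilon\dx=0$, and $\norm{\zeta_\varepsilon}_{H^2(\Om)}=O(\varepsilon)$.
\item Then $(\zeta_\varepsilon,0)\in\mathcal{W}^2_{K,L,m}$ has vanishing projected residual.
\item Yet $E_{\mathrm{free}}(\zeta_\varepsilon,0)\ge\frac{\varepsilon^2}{2K}\norm{g}_{L^2(\Gamma)}^2>0$.
\end{itemize}

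The remedy is what you already did for $K=\infty$: include the linear homogeneous condition $K\deln u_1=\alpha u_2-u_1$ in $V$. This is legitimate since $(\phi_\infty,\psi_\infty)$ satisfies \eqref{sta3:3}. Then:
\begin{itemize}
\item the boundary term vanishes and $E'(u)=\mathbf{P}_L r\in W$;
\item Lax--Milgram plus bulk-surface $\mathcal{H}^2$-regularity gives surjectivity of $A$;
\item testing $Au=0$ with $u$ gives injectivity;
\item Chill's theorem yields the inequality for pairs in $\mathcal{W}^2_{K,L,m}$ that are $\mathcal{H}^2$-close to the equilibrium and satisfy the boundary condition (\eqref{sta3:3}, resp.\ $\deln\zeta=0$).
\end{itemize}
This restricted version is all that is used later, since $(\phi(t),\psi(t))$ satisfies \eqref{EQ:MUTH:STRG}.

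There are also smaller corrections. For $K=\infty$ and $L<\infty$, Lemma~\ref{lemma:poinc} is not available. In that case $A$ has the one-dimensional kernel of constants $(k_1,k_2)$ with $\beta\abs{\Om}k_1+\abs{\Ga}k_2=0$, so $A$ is Fredholm of index zero rather than an isomorphism; this still suffices. Invoking Lemma~\ref{LEM:oml} silently uses \ref{D5}, so it is cleaner to prove the inequality around an arbitrary $(\phi_\infty,\psi_\infty)\in\mathcal{N}^{K,L}_m$, where Lemma~\ref{LM.boundedstat} gives the separation directly. Finally, your worry about $K=0$ with $\alpha\neq\beta$ is unnecessary: $(\alpha,1)\in\mathcal{H}^1_{0,\alpha}$ has nonzero mass by \ref{ASSUMP:2}, so the Lagrange multiplier lies in $\mathrm{span}\{(\beta,1)\}$ and is removed by $\mathbf{P}_L$.
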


\begin{proof}
    For $K=0$, $L\in [0,\infty]$ and $\alpha=1$, the result can be found in \cite[Lemma~5.2]{Lv2024a}, and for $K\in [0,\infty)$, $L=\infty$ and $\alpha=1$, it was established in \cite[Lemma~5.2]{Lv2024b}. For general $K\in [0,\infty]$ and $\alpha\in [-1,1]$ as specified by \ref{ASSUMP:1}, the statement can be verified analogously.
\end{proof}

\medskip

\begin{theorem}[Convergence to an equilibrium]
    Suppose that the assumptions \ref{ASSUMP:1}--\ref{ASSUMP:2}, \ref{S1}-\ref{S3} and \ref{D1}-\ref{D3} are fulfilled. Moreover, suppose that \ref{D5} holds with $a>0$.
    In addition, we assume that $F_1, G_1$ are real analytic on $(-1,1)$ and $F_2, G_2$ are real analytic on $\R$. 
    Let $(\phi_0,\psi_0) \in \mathcal{X}_{K,L,m}$, and let $(\phi,\psi,\mu,\theta)$ denote the corresponding weak solution to \eqref{conCH*} on $[0,\infty)$. 
    
    Then there exists $(\phi_\infty,\psi_\infty)\in \mathcal{N}^{K,L}_m$ such that
    \begin{align}
        \lim_{t\rightarrow\infty} \norm{(\phi(t),\psi(t)) - (\phi_\infty,\psi_\infty)}_{\mathcal{H}^2} = 0.\label{H2cv}
    \end{align}
\end{theorem}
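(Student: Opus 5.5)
We follow the classical Łojasiewicz--Simon strategy, with the non-monotone energy replaced by the non-increasing function $H$ from the proof of Lemma~\ref{LEM:oml}, corrected by a tail term built from \ref{D5}. Fix $(\phi_\infty,\psi_\infty)\in\oml$; it exists by Lemma~\ref{Lemma:oml}. By Lemma~\ref{LEM:oml} it is a stationary point and $E_{\mathrm{free}}(\phi_\infty,\psi_\infty)=E_*=\lim_{t\to\infty}H(t)$. By Lemma~\ref{LM.boundedstat} it is strictly separated, with some $\delta_\star>0$.

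\textbf{Step 1: separation and higher regularity near $\omega$.} Since $\sigma>d/2$, $\mathcal{H}^\sigma$ embeds into $C(\overline\Omega)\times C(\Gamma)$. Hence, whenever $(\phi(t),\psi(t))$ is $\mathcal{H}^\sigma$-close to $(\phi_\infty,\psi_\infty)$, it is separated with constant $\delta_\star/2$. On such time intervals, Corollary~\ref{COR:H3} (or rather its local-in-time argument) together with interpolation between $\mathcal{H}^\sigma$ and $\mathcal{H}^3$ shows that $\mathcal{H}^2$-closeness is controlled by $\mathcal{H}^\sigma$-closeness. Then Lemma~\ref{LEM:LS} becomes applicable on these intervals.

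\textbf{Step 2: modified Lyapunov functional.} Set $\Phi(t):=H(t)-E_*+\int_t^\infty \mathrm{e}^{-as}g(s)\,\mathrm{d}s$ with a suitable nonnegative $g$, or more simply $\Phi(t)=E_{\mathrm{free}}(\phi(t),\psi(t))-E_*+R(t)$, where
\[
R(t)=C\int_t^\infty\norm{(\bv,\bw)}_{\LL^2}\,\mathrm{d}s .
\]
From the energy identity, Young's inequality and the bound on $\norm{(\mu,\theta)}_{L^\infty(T,\infty;\mathcal{H}^1)}$ from Theorem~\ref{thm:highreg}, we get
\[
\frac{\mathrm d}{\mathrm dt}\Phi+\tfrac12\norm{(\mu,\theta)}_{L,\beta}^2\le 0 ,
\]
after choosing $C$ large. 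So $\Phi$ is non-increasing and converges to $0$, hence $\Phi\ge 0$.

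\textbf{Step 3: the key estimate.} Note that $\mathbf P_L$ applied to the variational derivative equals $\mathbf P_L(\mu,\theta)$, and
\[
\norm{\mathbf P_L(\mu,\theta)}_{\LL^2}\le C\norm{(\mu,\theta)}_{L,\beta}
\]
by the bulk-surface Poincaré inequality. On a good interval, Lemma~\ref{LEM:LS} then gives
\[
\Phi^{1-\varpi}\le C\big(\norm{(\mu,\theta)}_{L,\beta}+R^{1-\varpi}\big).
\]
Computing $-\frac{\mathrm d}{\mathrm dt}\Phi^{\varpi}$ leads to
\[
\norm{(\mu,\theta)}_{L,\beta}\le -C\frac{\mathrm d}{\mathrm dt}\Phi^{\varpi}+C R^{1-\varpi}\cdots .
\]
This step is the main obstacle: the extra term $R(t)^{1-\varpi}$ must be integrable in time. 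Here the exponential weight in \ref{D5} with $a>0$ is used. Monotonicity of $\norm{(\bv,\bw)}_{\LL^2}$ gives $\norm{(\bv(t),\bw(t))}_{\LL^2}\le C\mathrm{e}^{-at}$, so $R(t)\le C\mathrm{e}^{-at}$ and $R^{1-\varpi}$ is integrable (compare Remark~\ref{control}). We then obtain
\[
\int_{t_1}^{t_2}\norm{(\delt\phi,\delt\psi)}_{L,\beta,*}\,\mathrm{d}s\le C\int_{t_1}^{t_2}\big(\norm{(\mu,\theta)}_{L,\beta}+\norm{(\bv,\bw)}_{\LL^2}\big)\,\mathrm{d}s ,
\]
which is bounded by $C\Phi(t_1)^{\varpi}$ plus tails that go to $0$.

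\textbf{Step 4: trapping and conclusion.} We run the standard argument by contradiction. Choose $t_n$ with $(\phi(t_n),\psi(t_n))\to(\phi_\infty,\psi_\infty)$ in $\mathcal{H}^\sigma$, and let $\bar t_n$ be the first exit time from the $\mathcal{H}^\sigma$-ball of radius $\varepsilon$. The bound from Step 3 controls the $(\mathcal H^1_{L,\beta})'$-distance travelled on $[t_n,\bar t_n]$ by a quantity that tends to $0$. Interpolating between $(\mathcal H^1_{L,\beta})'$ and the uniform $\mathcal H^3$-bound from Step 1 then shows the $\mathcal{H}^\sigma$-distance stays below $\varepsilon/2$, a contradiction. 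Hence the trajectory eventually remains in the ball. Then $\partial_t(\phi,\psi)\in L^1(T,\infty;(\mathcal H^1_{L,\beta})')$, so the trajectory converges in $(\mathcal H^1_{L,\beta})'$, and therefore converges in $\mathcal{H}^2$ by interpolation with the $\mathcal{H}^3$ bound. This yields \eqref{H2cv}.
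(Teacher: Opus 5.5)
Your argument is correct, but it differs from the paper's proof in two respects.

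\textbf{Uniformity versus trapping.} The paper never localizes at a single limit point. It uses compactness of $\omega^{K,L}(\phi_0,\psi_0)$ in $C(\overline\Omega)\times C(\Gamma)$ to get a separation width that is uniform over the whole $\omega$-limit set. From this it obtains eventual separation of the trajectory, a global bound in $L^\infty(T_S,\infty;\mathcal H^3)$, and compactness of $\omega^{K,L}(\phi_0,\psi_0)$ in $\mathcal H^2$. A finite cover by {\L}ojasiewicz balls then gives an inequality valid for all $t\ge T_F$, so no exit-time argument is needed.

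\textbf{Handling the convective error.} The paper builds no perturbed Lyapunov functional. It integrates the energy inequality over $[s,\infty)$ to obtain $\big(\int_s^\infty\|(\mu,\theta)\|_{L,\beta}^2+\|(\mathbf v,\mathbf w)\|_{\mathcal L^2}^2\,\mathrm dr\big)^{2(1-\varpi)}\le C\big(\|(\mu(s),\theta(s))\|_{L,\beta}^2+\|(\mathbf v(s),\mathbf w(s))\|_{\mathcal L^2}^2\big)$. The velocity tail is controlled via monotonicity and \eqref{nonnincr}. It then invokes Lemma~7.1 of~\cite{FS} to get $\|(\mu,\theta)\|_{L,\beta}\in L^1(T_*,\infty)$.

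Your functional $\Phi=E_{\mathrm{free}}-E_*+C\int_t^\infty\|(\mathbf v,\mathbf w)\|_{\mathcal L^2}$ replaces that lemma by the classical computation of $\frac{\mathrm d}{\mathrm dt}\Phi^{\varpi}$. The error term $R^{1-\varpi}$ is integrable because monotonicity and \ref{D5} give $\|(\mathbf v(t),\mathbf w(t))\|_{\mathcal L^2}\le C\mathrm e^{-at}$ (average the weighted integral over $[t-1,t]$). This is exactly where $a>0$ enters.

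\textbf{What each route buys.} Yours is more self-contained: no external ODE lemma, no uniform-separation or covering step. You only need separation of one equilibrium and the pointwise-in-time elliptic $\mathcal H^3$ estimate behind Corollary~\ref{COR:H3}. Your differential inequality is also the natural starting point for convergence rates. The price is the trapping argument, which needs two further ingredients:
\begin{itemize}
\item continuity of $t\mapsto(\phi(t),\psi(t))$ in $\mathcal H^\sigma$ for $t>0$, available from Theorem~\ref{thm:highreg} and Aubin--Lions--Simon;
\item the passage from $(\mathcal H^1_{L,\beta})'$-smallness to $\mathcal H^\sigma$-smallness under an $\mathcal H^3$ bound.
\end{itemize}
The second is cleanest via Ehrling's lemma rather than interpolation; note that $\mathcal L^2\to(\mathcal H^1_{L,\beta})'$ is injective also for $L=0$.

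\textbf{Points to tighten.} Justify $\Phi\to0$ directly: $\Phi$ is non-increasing and bounded below, $R\to0$, and $E_{\mathrm{free}}(\phi(t_n),\psi(t_n))\to E_{\mathrm{free}}(\phi_\infty,\psi_\infty)$ by the $\mathcal H^\sigma$-convergence. Do not argue through $\lim_{t\to\infty}H(t)$. That limit generally differs from $\lim_{t\to\infty}E_{\mathrm{free}}(\phi(t),\psi(t))$ by the total convective work $\int_0^\infty\big(\int_\Omega\phi\mathbf v\cdot\nabla\mu\,\mathrm dx+\int_\Gamma\psi\mathbf w\cdot\nabla_\Gamma\theta\,\mathrm d\Gamma\big)\,\mathrm dr$. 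Also dispose of the trivial case $\Phi(t_0)=0$ before dividing by $\Phi^{1-\varpi}$.
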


\begin{proof}
Since $\oml$ is a compact subset of $\mathcal{W}^\sigma_{K,L,m}$ with $\sigma\in(\frac{d}{2},2)$ (see Lemma~\ref{Lemma:oml}), the embedding $\mathcal{W}^\sigma_{K,L,m}\emb C(\overline\Om)\times C(\Ga)$ implies that $\oml$ is also a compact subset of $C(\overline\Om)\times C(\Ga)$. 
Then, as $\oml\subset  \mathcal{N}_{m}^{K,L}$ and any pair $(\phi,\psi)\in \mathcal N^{K,L}_m$ (see Lemma~\ref{LEM:oml}) is strictly separated from the pure phases (see Lemma~\ref{LM.boundedstat}), we can argue by contradiction (see, for instance, \cite{AbelsWilke} or \cite[Proof of Lemma 3.11]{GGPS}) to show that there exists $\delta\in(0,1)$ such that
\begin{align*}
    \norm{\phi_*}_{C(\overline{\Om})}\leq 1-\delta,\quad \norm{\psi_*}_{C(\Gamma)}\leq 1-\delta\qquad\text{for all~} (\phi_*,\psi_*)\in \oml.
\end{align*}
Here, it is crucial that the width of the separation layer $\delta$ is independent of $(\phi_*,\psi_*)\in \oml$.

Moreover, we know from Lemma~\ref{Lemma:oml} that 
\begin{align}
    \label{CONV:WSIG}
    \dist_{\mathcal W^\sigma_{K,L,m}}((\phi(t),\psi(t)),\oml)\to 0\qquad\text{as~}t\to \infty,
\end{align}
with $\sigma\in\big(\frac d2,2\big)$.

As the embedding $\mathcal W^{s}_{K,L,m}\emb C(\overline\Om)\times C(\Ga)$ is compact, there $T_S>0$ such that 
\begin{align}
    \norm{(\phi(t),\psi(t))}_{C(\overline\Om)\times C(\Gamma)}\leq 1-\frac{\delta}{2},\qquad\text{for all~} t\geq T_S.\label{unifsep}
\end{align}
Thanks to this uniform separation property, we can apply Corollary~\ref{COR:H3} to infer that 
\begin{align}
\norm{(\phi,\psi)}_{L^\infty(T_S,\infty;\mathcal H^3)}\leq C.
    \label{regH3}
\end{align}
In particular, this entails that $\oml$ is a bounded subset of $\mathcal H^3$.

Now, to show that $\oml$ is a singleton, we need to make use of the additional assumption \ref{D5}. Using Remark~\ref{control}, we deduce that
\begin{align*}
    &\int_{T_{\mathrm{dec}}}^\infty \norm{(\bv,\bw)}_{\mathcal{L}^2}^{\frac{1-2\varpi}{1-\varpi}}\ds\\
    &\quad\leq \left(\int_{T_\mathrm{dec}}^\infty 
        \mathrm{e}^{as} \bignorm{\big(\bv(s),\bw(s) \big)}_{\LL^2} 
        \ds\right)^{\frac{1-2\varpi}{1-\varpi}} \left(\int_{T_\mathrm{dec}}^\infty 
        \mathrm{e}^{-a\frac{1-2\varpi}{\varpi}s} 
        \ds\right)^{\frac{\varpi}{1-\varpi}} \\
        &\quad\leq \Big(\frac{\varpi}{a(1-2\varpi)}\Big)^{\frac{\varpi}{1-\varpi}} \left(\int_{T_\mathrm{dec}}^\infty 
        \mathrm{e}^{as} \bignorm{\big(\bv(s),\bw(s) \big)}_{\LL^2} 
        \ds\right)^{\frac{1-2\varpi}{1-\varpi}}
\end{align*}
for all $\varpi\in(0,\tfrac12)$.
This entails that
\begin{align}\label{nonnincr}
    \norm{(\bv,\bw)}_{L^{\frac{1-2\varpi}{1-\varpi}}(T_{\mathrm{dec}},\infty;\mathcal{L}^2)} \leq \Big(\frac{\varpi}{a(1-2\varpi)}\Big)^{\frac{\varpi}{1-2\varpi}}\int_{T_\mathrm{dec}}^\infty 
        \mathrm{e}^{as} \bignorm{\big(\bv(s),\bw(s) \big)}_{\LL^2} 
        \ds
\end{align}
for all $\varpi\in(0,\tfrac12)$.
Notice that the assumptions \ref{D3} and \ref{D5} readily imply that $\bv(t)\to \mathbf 0$ in $\bL^2(\Om)$ and $\mathbf w(t)\to \mathbf 0$ in $\bL^2(\Gamma)$ as $t\to \infty$.

Now, thanks to \eqref{CONV:WSIG}, \eqref{regH3} and the compact embedding $\mathcal H^3  \hookrightarrow\mathcal H^2$, we have 
\begin{align}
\dist_{\mathcal H^2}\big((\phi(t),\psi(t)), \oml\big)\to 0 \qquad\text{as~}t\rightarrow\infty.
    \label{distH2}
\end{align}
Moreover, as $\oml$ is a compact subset of $\mathcal{W}^\sigma_{K,L,m}$ (see Lemma~\ref{Lemma:oml}) and a bounded subset of $\mathcal H^3$, we conclude by interpolation that $\oml$ is a compact subset of $\mathcal H^2$.

Now, for any $(\phi_*,\psi_*) \in \oml$, let $\varpi^* \in (0,\tfrac 12)$ and $b^*>0$ be as obtained by Lemma~\ref{LEM:LS}. Then, we obviously have 
\begin{align*}
    \oml\subset \bigcup_{(\phi_*,\psi_*) \in \oml} B_{b^*}(\phi_*,\psi_*).
\end{align*}
where $B_{b^*}(\phi_*,\psi_*)$ denotes the open ball in $\mathcal H^2$ with center $(\phi_*,\psi_*)$ and radius $b_*$.
As $\oml$ is a compact subset of $\mathcal H^2$, there exist finitely many points $\{(\phi^*_i,\psi^*_i)\}_{i=1,...,M}$ with corresponding radii $\{b^*_i\}_{i=1,...,M}$
such that 
\begin{align*}
    \oml\subset \bigcup_{i=1}^M B_{b^*_i}(\phi^*_i,\psi^*_i)
\end{align*}
and the {\L}ojasiewicz--Simon inequality is fulfilled on each of these balls with an exponent $\varpi_i^* \in (0,\tfrac 12)$. This means that for every $i\in\{1,...,M\}$, there exists a constant $C_i>0$ such that 
\begin{align}
    C_i \left\|{\mathbf{P}_L}
    \begin{pmatrix}
        -\Lap\zeta + F^\prime(\zeta) \\
        -\Lapg\xi + G^\prime(\xi) + \alpha\deln\zeta
    \end{pmatrix}\right\|_{\mathcal{L}^2} \geq \abs{E_{\mathrm{free}}(\zeta, \xi) -E_*}^{1 - \varpi^*_i}\label{LJi}
\end{align}
for all $(\zeta,\xi)\in B_{b^*_i}(\phi^*_i,\psi^*_i)$. Now, we define
\begin{align*}
    C \coloneqq \underset{i=1,...,M}{\max}\, C_i \, >0
    \quad\text{and}\quad
    \varpi^* \coloneqq \underset{i=1,...,M}{\min}\,\varpi^*_i \in \big(0,\tfrac 12\big).
\end{align*}
As the balls $B_{b^*_i}(\phi^*_i,\psi^*_i)$, $i=1,...,M$, are open, there further exists
\begin{align*}
    0 < b < \underset{i=1,...,M}{\min} b^*_i
\end{align*}
such that for all $(\zeta,\xi)\in\mathcal{W}^2_{K,L,m}$, it holds that
\begin{align*}
    (\zeta,\xi) \in \bigcup_{i=1}^M B_{b^*_i}(\phi^*_i,\psi^*_i)
    \quad\text{if}\quad
    \dist_{\mathcal H^2}\big((\zeta,\xi),\oml\big)\leq b.
\end{align*}
Hence, in view of \eqref{LJi}, we conclude that
\begin{align}
    \begin{split}
    &C \left\|{\mathbf{P}_L}
    \begin{pmatrix}
        -\Lap\zeta + F^\prime(\zeta) \\
        -\Lapg\xi + G^\prime(\xi) + \alpha\deln\zeta
    \end{pmatrix}\right\|_{\mathcal{L}^2} 
    \geq \abs{E_{\mathrm{free}}(\zeta, \xi) -E_*}^{1 - \varpi^*}
    \label{LJ}
    \end{split}
\end{align}
for all $(\zeta,\xi)\in\mathcal{W}^2_{K,L,m}$ with 
\begin{align*}
    \dist_{\mathcal H^2}\big((\zeta,\xi),\oml\big)\leq b
    \quad\text{and}\quad
    \abs{E_{\mathrm{free}}(\zeta, \xi) -E_*} \le 1.
\end{align*}

Recalling Lemma~\ref{LEM:oml} and the convergence \eqref{distH2}, we conclude that there exists a time $T_F>\max\{T_S,T_\mathrm{dec}\}$ such that
\begin{align*}
    \dist_{\mathcal H^2}\big((\phi(t),\psi(t)),\oml\big)\leq b 
    \quad\text{and}\quad
    \abs{E_{\mathrm{free}}(\phi(t),\psi(t))-E_*} \le 1
\end{align*}
for all $t\geq T_F$.
Thus, employing \eqref{LJ}, we obtain
\begin{align}
    \begin{split}
   \abs{E_{\mathrm{free}}(\phi(t), \psi(t)) -E_*}^{1 - \varpi^\ast} &\leq C \left\|{\mathbf{P}_L}\begin{pmatrix}
        -\Lap\phi(t) + F^\prime(\phi(t)), \\
        -\Lapg\psi(t) + G^\prime(\psi(t)) + \alpha\deln\phi(t)
    \end{pmatrix}\right\|_{\mathcal{L}^2} 
    \\[1ex]
    &\leq C\norm{(\mu(t),\theta(t))}_{L,\beta}. \label{LJf}
    \end{split}
\end{align}
for all $t\geq T_F$. Now, using \cite[Lemma 7.1]{FS}, the assertion can be established by following the line of argument in \cite{GrasselliPoiatti}. To this end, let $s,t\geq T_F$ be arbitrary. Then, recalling \eqref{unifsep}, we use the energy inequality \eqref{WS:ENERGY} to deduce that
\begin{align*}
   &\int_s^t \norm{(\mu,\theta)}_{L,\beta}^2\dr+\frac12 \int_s^t\norm{(\bv,\bw)}_{\mathcal{L}^2}^2\dr
   \\[1ex]
   &\quad\leq E_{\mathrm{free}}(\phi(s),\psi(s))-E_{\mathrm{free}}(\phi(t),\psi(t)) 
   \\
   &\qquad +\int_s^t\intO \phi\bv\cdot \Grad\mu \dxr
			+ \int_s^t\intG \psi\bw \cdot \Gradg\theta \dGr+\frac12 \int_s^t\norm{(\bv(\tau),\bw(\tau))}_{\mathcal{L}^2}^2\dr 
    \\[1ex]
    &\quad\leq E_{\mathrm{free}}(\phi(s),\psi(s))-E_{\mathrm{free}}(\phi(t),\psi(t))
    \\
    &\qquad+ \int_s^t\norm{(\bv,\bw)}_{\mathcal{L}^2}^2\dr+\frac12\int_s^t \norm{(\mu,\theta)}_{L,\beta}^2\dr.
\end{align*}
After rearranging some terms and elevating to the power $2(1-\varpi^\ast) > 0$, we obtain 
\begin{align*}
   &\left(\frac12\right)^{2(1-\varpi^\ast)}\left(\int_s^t \norm{(\mu,\theta)}_{L,\beta}^2\dr+\int_s^t\norm{(\bv,\bw)}_{\mathcal{L}^2}^2\dr\right)^{2(1-\varpi^\ast)} 
   \\
   &\quad\leq \left( E_{\mathrm{free}}(\phi(s),\psi(s))-E_{\mathrm{free}}(\phi(t),\psi(t))+\int_s^t\norm{(\bv,\bw)}_{\mathcal{L}^2}^2\dr\right)^{2(1-\varpi^\ast)} 
   \\[1ex]
   &\quad\leq 2^{2(\frac12-\varpi^\ast)}\vert E_{\mathrm{free}}(\phi(s),\psi(s))-E_*\vert^{2(1-\varpi^\ast)}+2^{2(\frac12-\varpi^\ast)}\vert E_{\mathrm{free}}(\phi(t),\psi(t))-E_*\vert^{2(1-\varpi^\ast)} 
   \\
   &\qquad +2^{2(\frac12-\varpi^\ast)}\left(\int_s^t\norm{(\bv,\bw)}_{\mathcal{L}^2}^2\dr\right)^{2(1-\varpi^\ast)}.
\end{align*}
Now, we send $t\to\infty$. Recalling that $E_{\mathrm{free}}(\phi(t),\psi(t))\to E_*$ as $t\to \infty$, we apply \eqref{LJf} to infer that
\begin{align*}
   & \left(\frac12\right)^{2(1-\varpi^\ast)}\left(\int_s^\infty \norm{(\mu,\theta)}_{L,\beta}^2\dr+\int_s^\infty\norm{(\bv,\bw)}_{\mathcal{L}^2}^2\dr\right)^{2(1-\varpi^\ast)} \\
   &\quad\leq 2^{2(\frac12-\varpi^\ast)}\vert E_{\mathrm{free}}(\phi(s),\psi(s))-E_*\vert^{2(1-\varpi^\ast)}+2^{2(\frac12-\varpi^\ast)}\left(\int_s^\infty\norm{(\bv,\bw)}_{\mathcal{L}^2}^2\dr\right)^{2(1-\varpi^\ast)} \\
   &\quad\leq 2^{2(\frac12-\varpi^\ast)}C\norm{(\mu(s),\theta(s))}^{2}_{L,\beta}+2^{2(\frac12-\varpi^\ast)}\left(\int_s^\infty\norm{(\bv,\bw)}_{\mathcal{L}^2}^2\dr\right)^{2(1-\varpi^\ast)}.
\end{align*}
Since $T_F>T_\mathrm{dec}$, the mapping $(T_F,\infty)\ni t\mapsto \norm{(\bv(t),\bw(t))}_{\mathcal{L}^2}$ is non-increasing according to assumption \ref{D5}. Thus, using the estimate \eqref{nonnincr}, we deduce that
\begin{align*}
   &2^{2(\frac12-\varpi^\ast)}\left(\int_s^\infty\norm{(\bv,\bw)}_{\mathcal{L}^2}^2\dr\right)^{2(1-\varpi^\ast)} \\
   &\quad\leq 2^{2(\frac12-\varpi^\ast)}\left(\norm{(\bv(s),\bw(s))}_{\mathcal{L}^2}^{\frac{1}{1-\varpi^\ast}}\int_s^\infty\norm{(\bv,\bw)}_{\mathcal{L}^2}^{\frac{1-2\varpi^\ast}{1-\varpi^\ast}}\dr\right)^{2(1-\varpi^\ast)} \\ 
   &\quad\leq 2^{2(\frac12-\varpi^\ast)}C\norm{(\bv(s),\bw(s))}_{\mathcal{L}^2}^{2}.
\end{align*}
Hence, we conclude that
\begin{align*}
   & \left(\int_s^\infty \norm{(\mu,\theta)}_{L,\beta}^2\dr+\int_s^\infty\norm{(\bv,\bw)}_{\mathcal{L}^2}^2\dr\right)^{2(1-\varpi^\ast)} \\
   &\quad\leq C\left(\norm{(\mu(s),\theta(s))}^{2}_{L,\beta}+\norm{(\bv(s),\bw(s))}_{\mathcal{L}^2}^{2}\right)
\end{align*}
for almost all $s\geq T_F$. According to \cite[Lemma 7.1]{FS}, this yields
\begin{align*}
    \norm{(\mu,\theta)}_{L^1(T_*,\infty)}\leq C
\end{align*}
for some $T_*>0$. 
Recalling that $(\bv(t),\bw(t))\to(\mathbf{0},\mathbf{0})$ in $\mathcal{L}^2$ as $t\to \infty$,
we infer from \eqref{conCH*:1} and \eqref{conCH*:3} that 
\begin{align*}
    (\partial_t\phi,\partial_t \psi)\in L^1(T_*,\infty;(\mathcal{H}^1_{L,\beta})^\prime).
\end{align*}
Hence, using the fundamental theorem of calculus, we conclude that, as $t\to\infty$,
\begin{align*}
    (\phi(t),\psi(t))\to (\phi(T_*),\psi(T_*)) + \int_{T_*}^\infty(\delt\phi,\delt\psi)\dr=:(\phi_\infty,\psi_\infty) \quad\text{in $(\mathcal{H}^1_{L,\beta})^\prime$}.
\end{align*}
This allows us to identify $\oml=\{(\phi_\infty,\psi_\infty)\}$, which completes the proof, since \eqref{H2cv} directly comes from \eqref{distH2}.
\end{proof}



\section*{Acknowledgements}
PK and JS were supported by the Deutsche Forschungsgemeinschaft (DFG, German Research Foundation): on the one hand by the DFG-project 524694286, and on the other hand by the RTG 2339 ``Interfaces, Complex Structures, and Singular Limits". Their support is gratefully acknowledged.

AP was supported in part by the Österreichischer Wissenschaftsfonds FWF (FWF, Austrian Science Fund) \href{https://doi.org/10.55776/ESP552}{10.55776/ESP552}, whose support is gratefully acknowledged. AP is member of Gruppo Nazionale per l’Analisi Matematica, la Probabilità e le loro Applicazioni (GNAMPA) of
Istituto Nazionale per l’Alta Matematica (INdAM). 

Parts of this work were done while AP and SY were visiting the Faculty of Mathematics of the University of Regensburg, whose hospitality is greatly appreciated.
AP's visit was funded by the Alexander von Humboldt Foundation, while SY's visit was funded by the Deutscher Akademischer Austauschdienst (DAAD, German Academic Exchange Service). Their support is gratefully acknowledged.

For open access purposes, the authors have applied a CC BY public copyright license to
any author accepted manuscript version arising from this submission.




\footnotesize

\bibliographystyle{abbrv}
\bibliography{KPSY}

\end{document}